 \numberwithin{equation}{section}
\theoremstyle{definition}
\newtheorem{Theorem}{Theorem}
\newtheorem{Proposition}{Proposition}
\newtheorem{Definition}{Definition}
\newtheorem{Remark}{Remark}
\newtheorem{Lemma}{Lemma}
\newtheorem{Example}{Example}
\numberwithin{Lemma}{section}
\numberwithin{Theorem}{section}
\numberwithin{Proposition}{section}
\numberwithin{Definition}{section}
\numberwithin{Remark}{section}
\title{Three-dimensional Gaussian fluctuations of  non-commutative random surface growth with a reflecting wall}
\date{}
\author{ Zhengye Zhou\footnote{Texas A$\&$M University, College Station, TX, United States of America.  Email: zyzhou@tamu.edu}}
\begin{document}
\maketitle
\begin{abstract}
    We consider the multi-time correlation and covariance structure of a random surface growth with a wall introduced in \cite{borodin2011random}. It is shown that the correlation functions associated with the model along space-like paths have determinantal structure, which yields the convergence of height fluctuations to that of a Gaussian free field.  We also construct a continuous-time non-commutative random walk on $U(\mathfrak{so}_{N+1})$, which matches the random surface growth when restricting to the Gelfand-Tsetlin subalgebra of $U(\mathfrak{so}_{N+1})$. As an application, we  prove the convergence of moments to an explicit Gaussian free field and get the  covariance functions of the associated random point process along both the space-like paths and time-like paths. In particular, it does not match the three-dimensional Gaussian field from spectra of  overlapping
stochastic Wishart matrices in \cite{kuan2021threedimensional} even along the space-like paths.
\end{abstract}

\section{Introduction}
As an approach to study the Anisotropic Kardar–Parisi–Zhang (AKPZ) equation,  many models in the AKPZ universality class were  studied over the past decades (e.g. \cite{BCT-Sto,PS,TF2+1}). There has been lots of progress in understanding large time asymptotics of driven interacting particle systems on the $2+1$ dimensions random growth models in the AKPZ universality class (e.g. \cite{BCF-Aniso,BCT-Sto,BF-Anisotropic,2013}). For example, in \cite{2013}, the authors constructed a class of two-dimensional random surface growth models, which can be interpreted as  random point processes. It was shown that along space-like paths, the point processes are determinantal. The authors also established Gaussian fluctuations of one specific growing random surface along space-like paths by computing the correlation kernel and taking asymptotics. Later in \cite{kuan2014threedimensional},  a continuous-time non-commutative random walk on   $U(\mathfrak{gl}_N)$ (the universal enveloping algebra of $\mathfrak{gl}_N$) was introduced, which matches the random surface growth model introduced in \cite{2013} when restricting to the Gelfand-Tsetlin subalgebra of $U(\mathfrak{gl}_N)$. As an application, the convergence to the Gaussian fluctuations along  time-like paths was proved, completing the entire three-dimensional Gaussian field. 
In particular, it was also shown that this three-dimensional Gaussian field matches the one for eigenvalues of stochastic Wigner matrices in \cite{AB-CLT}.

Another model of interest is the random surface growth with a reflecting wall constructed in \cite{borodin2011random}, which can be viewed as an one-parameter family of Plancherel
measures for the infinite-dimensional orthogonal group. It is believed that this model also belongs to the AKPZ universality class with certain boundary condition. Shown in Figure \ref{fig:2}, the random surface growth is equivalent to a particle process in the quarter plane.

\begin{figure}[h]
    \centering
    \includegraphics[width=0.2\columnwidth]{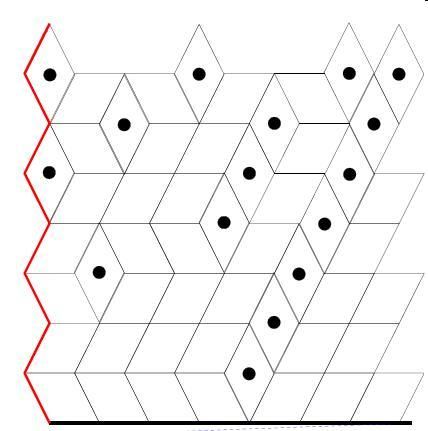}
    \caption{random surface growth with a wall}
    \label{fig:2}
\end{figure}

 In \cite{borodin2011random},   the determinantal formula for the correlation functions at any finite time moment was derived. However, the authors did not provide the correlation functions along space-like paths since the evolution of measure does not match the general L-ensembles (see e.g. \cite{BCF-Aniso,Borodin_2007,BRE-Eynard}). More specifically, this model has $n$ particles at both $(2n-1/2\pm 1/2)$-th levels, thus the the corresponding L-ensemble should  only have virtual variables for even levels.   The Gaussian free field fluctuations for the height functions at any fixed time moment were later proved in \cite{10.1214/EJP.v19-3732}.  In this paper, we first introduce a generalized L-ensemble, which implies the determinantal property and the formula for the correlation function.  We then apply this generalization to  extend the determinantal formula for the correlation functions to  multi-time moments. Later we introduce a continuous-time non-commutative random walk on the universal enveloping algebra of the Lie group $\mathfrak{so}_{N+1}$ (denoted $U(\mathfrak{so}_{N+1})$), which is an analog of the non-commutative random walks on $U(\mathfrak{gl}_N)$ (see e.g. \cite{B-Quantum,CD-Quantum,kuan2014threedimensional}). It can be proved that this random walk matches the random surface growth. With the help of the non-commutative random walk, we obtain the three-dimensional Gaussian fluctuations and the covariance structures of the random surface growth. It is natural to compare the three-dimensional Gaussian free fields from both interacting particle systems and the eigenvalue processes of random matrices that belong to the AKPZ universality class. For our model, an appropriate analog is the Wishart matrices, since the eigenvalues are always non–negative,
corresponding to the fact that particles are restricted to be to the right of the reflecting wall. However, the Gaussian fluctuation fields from this paper and \cite{kuan2021threedimensional}  are different along both the space-like paths and the time-like paths.

\textbf{Outline of the paper.}
In section 2, we review some facts about the algebras and the random surface growth. 
In section 3, we introduce a generalized L-ensemble to show that our model is a determinantal random point process along space-like paths. Moreover, we compute the multi-time correlation kernel which extends the existing formulas  in \cite{borodin2011random} for the single-time kernel. Later, we define a continuous-time non-commutative random walk on $U(\mathfrak{so}_{N+1})$ in section 4. We also show the relation between the random surface growth and the non-commutative random walk in section 4. In section 5, we show that moments of the random surface converge to Gaussian free fields along both space-like paths and time-like paths. In addition, the explicit covariance formulas are given.

\textbf{Acknowledgments.} The author would like to thank Jeffrey Kuan for enlightening discussions and
providing Figure \ref{fig:2}.

\section{Preliminaries}
\subsection{Representation theory}
We first review some useful results from representation theory (see e.g \cite{borodin2011random,MR2355506,OO-Limits}). 
In this paper, for any positive integer $N$ such that $N=2n+a-\frac{1}{2}$ with $a\in\left\{\pm \frac{1}{2}\right\}$,  we abuse the notation for $N$ and the corresponding pair $(n,a)$. 

Let $O(N+1)$ denote the group of $(N+1)\times (N+1)$ real-valued orthogonal matrices. The special orthogonal group $SO(N+1)$ is the subgroup of $O(N+1)$ consisting of matrices with determinant $1$. The associated Lie algebra is denoted by $\mathfrak{so}_{N+1}$, which consists of  $(N+1)\times (N+1)$  skew-symmetric square matrices with Lie bracket the commutator. Last, let $O(\infty)=\bigcup_{N=0}^\infty O(N+1)$ and $SO(\infty)=\bigcup_{N=0}^\infty SO(N+1)$.

Recall that if $O\in SO(N+1)$, then the spectrum of $O$ is of the form $\{z_1,z_1^{-1},\ldots,z_n,z_n^{-1}\}$ if $N+1=2n$, while when $N+1=2n+1$, the spectrum of $O$ is of the form $\{z_1,z_1^{-1},\ldots,z_{n},z_{n}^{-1},1\}$, where $z_i$ are roots of unity in both cases.
Let $\Omega$ be the set  of all $\omega=(\alpha,\beta,\delta)$ such that 
\begin{equation*}
    \alpha=(\alpha_1\ge \alpha_2\ge \ldots\ge 0)\in\mathbb{R}^{\infty},\     \beta=(\beta_1\ge \beta_2\ge \ldots\ge 0)\in\mathbb{R}^{\infty},\ \delta \in\mathbb{R}
\end{equation*}
and
\begin{equation*}
    \sum_{i=1}^\infty(\alpha_i+\beta_i)\le \delta.
\end{equation*}
For any $\omega=(\alpha,\beta,\delta)\in\Omega$, we define a function
\begin{equation}\label{eq: 5}
    \chi^\omega(O)=\prod_{j=1}^n E^\omega\left(\frac{z_j+z_j^{-1}}{2}\right),
\end{equation}
where
\begin{equation}\label{eq: 11}
    E^\omega(x)=e^{\left(\delta-\sum_{i=1}^\infty(\alpha_i+\beta_i)\right)(x-1)}\prod_{i=1}^\infty\frac{1-\beta_i(1-x)+\beta_i^2(1-x)/2}{1-\alpha_i(1-x)+\alpha_i^2(1-x)/2},
\end{equation}
if we set $x=(z+z^{-1})/2$. Then each $\omega\in\Omega$ identifies an extreme character $\chi^\omega$ of $O(\infty)$.

When $n\in\mathbb{N}$, it is a classical result that the set of all irreducible representations of $SO(2n+1)$ over $\mathbb{C}$ is parameterized by partitions of length $\le n$, which is a sequence of nonincreasing nonnegative integers $\lambda=(\lambda_1\ge \ldots\ge \lambda_{n}\ge 0)$. For each partition  $\lambda$, denote the corresponding character of the irreducible representation of $SO(2n+1)$ by $\chi^\lambda_{SO(2n+1)}$, whose dimension is denoted $\text{dim}_{SO(2n+1)}\lambda$. Similarly, the set of all irreducible representations of $SO(2n)$ over $\mathbb{C}$ is parameterized by sequences of integers $\lambda=(\lambda_1, \ldots, \lambda_n)$   satisfying $\lambda_1\ge \ldots\ge\lambda_{n-1}\ge |\lambda_n|$. Again,  for each $\lambda$, we denote the corresponding character of the irreducible representation of $SO(2n)$ by $\chi^\lambda_{SO(2n)}$ with dimension $\text{dim}_{SO(2n)}\lambda$.

Let $\mathbb{J}_n$ denotes the set of all partitions of length $\le n$ and  $\mathbb{J}_{n,a}$ with $a\in\{\pm 1/2\}$ be two copies of $\mathbb{J}_n$.  In what follows, we slightly abuse notations to interchange the use of  $a=\pm 1/2$ and its sign, for example, we will write $\mathbb{J}_{n,\pm 1/2}$ as $\mathbb{J}_{n,\pm}$.  

For any $\omega\in \Omega$, the restriction of $\chi^\omega$ defined in \eqref{eq: 5} to any $SO(M)$ defines two measures $P^\omega_{n,a}$ on $\mathbb{J}_{n,a}$ such that
\begin{equation}\label{eq: 7}
    \chi^\omega|_{SO(2n+1)}=\sum_{\lambda\in \mathbb{J}_n}P^\omega_{n,\frac{1}{2}}(\lambda)\frac{\chi^\lambda_{SO(2n+1)}}{\text{dim}_{SO(2n+1)}\lambda},
\end{equation}
\begin{equation}\label{eq: 8}
     \chi^\omega|_{SO(2n)}=\sum_{\lambda\in \mathbb{J}_n}P^\omega_{n,-\frac{1}{2}}(\lambda)\frac{\chi^\lambda_{SO(2n)}+\chi^{\lambda^*}_{SO(2n)}}{\text{dim}_{SO(2n)}\lambda}, 
\end{equation}
where $\lambda^*=(\lambda_1, \ldots, \lambda_{n-1},-\lambda_n)$.

Let $J_k^{(a,b)}$ denote the $k$-th Jacobi polynomial with parameters $a,b$ (see e.g. \cite{SG-Orthogonal}), define the constant $c_k$ to be 
\begin{equation*}
    c_k=\left\{\begin{array}{cc}
    \frac{1\cdot 3\cdot \ldots\cdot (2k-1)}{2\cdot 4\cdot\ldots\cdot 2k}     &  \text{ if } k>0,\\
        1 & \text{ if } k=0,
    \end{array}\right.
\end{equation*}
and let $\mathcal{J}_k^{(a,b)}=J_k^{(a,b)}/c_k$.

Then for any $\lambda\in\mathbb{J}_{n,a}$,  and $\omega\in\Omega$,  the measure $P^\omega_{n,a}$  provided in \cite{borodin2011random} is as follows:  
\begin{equation}\label{eq: 6}
   P^\omega_{n,a}(\lambda)=C_{n,a}\times \text{det}\left[\psi_{n-j}^{n,a}\left(\lambda_i-i+n\right)\right]_{1\le i,j\le n}\times \text{dim}_{SO(2n+1/2+a)}\lambda,
\end{equation}
where 
\begin{equation*}
    \psi_{n-l}^{n,a}(k)=\frac{W^{(a,-1/2)}(k)}{\pi}\int_{-1}^1E^\omega(x)(x-1)^{n-l}\mathcal{J}_k^{(a,-1/2)}(x)(1-x)^a(1+x)^{-1/2}dx,
\end{equation*}
\begin{equation*}
      W^{(a,b)}(k)=\left\{\begin{array}{ll}
    2, &  \text{ if } k>0,\ a=b=-1/2,\\
        1, & \text{ if } k=0,\ a=b=-1/2, \\
         1, & \text{ if } k\ge 0,\ a=1/2,\ b=-1/2,
    \end{array}\right.
\end{equation*}
and $C_{n,a}$ is the normalization term for $P^\omega_{n,a}$. In particular, $P^{(0,0,0)}_{n,a}$ is the delta measure at the point $(1,2,\cdots,n)$.

For $N+1=2n+1$ or $N+1=2n$ respectively, we number the rows and columns of $(N+1)\times (N+1)$ matrices by the indices $\{-n,\ldots,-1,0,1,\ldots ,n\}$ and  $\{-n,\ldots,-1,1,\ldots ,n\}$.

The Lie algebra $\mathfrak{so}_{N+1}$ is spanned by the generators
$$F_{ij}=E_{ij}-E_{-j,-i},\ \ \ -n\le i,j\le n, i\neq -j.$$
The commutation relation in $ \mathfrak{so}_{N+1}$ is 
\begin{equation*}
    [F_{ij},F_{kl}]=\delta_{kj}F_{il}-\delta_{il}F_{kj}-\delta_{i,-k}F_{-j,l}+\delta_{-l,j}F_{k,-i}.
\end{equation*}

The coproduct $\Delta:U(\mathfrak{so}_{N+1})\xrightarrow[]{}U(\mathfrak{so}_{N+1})\otimes U(\mathfrak{so}_{N+1})$
is given by $\Delta(F_{ij})=F_{ij}\otimes 1+1\otimes F_{ij}$.

Let $Z(U(\mathfrak{so}_{N+1}))$ be the centre of  $U(\mathfrak{so}_{N+1})$, we introduce the generators of $Z(U(\mathfrak{so}_{N+1}))$ (see e.g. chapter 7 of \cite{MR2355506}).
Denote by $F^{(m)}$ the matrix with entries $F_{ij}$, where $1\le m\le n$ and $i,j=-m,-m+1,\cdots,m$. Denote by $\Tilde{F}^{(m)}$ the submatrix of $F^{(m)}$ obtained by removing the row and column enumerated by $-m$. Let $\rho_i=-\rho_{-i}=-i+1$ if $N+1$ is even, while $\rho_i=-\rho_{-i}=-i+\frac{1}{2}$ if $N+1$ is odd.

Let $\Lambda_k^{(m)}$, $\Phi_k^{(m)}$,  $\Tilde{\Lambda}_k^{(m)}$, $\Tilde{\Phi}_k^{(m)}$  denote the noncommutative symmetric  functions defined as the coefficients in the expansion of the following quasideterminant (see Definition 1.10.1 in \cite{MR2355506}):
\begin{equation*}
    \begin{array}{l}
     1+\sum_{k=1}^\infty \Lambda_k^{(m)}q^k=\left|1+q\left(F^{(m)}+\rho_m\right)\right|_{mm},      \\
   \sum_{k=1}^\infty\Phi_k^{(m)}q^{k-1}=-\frac{d}{dq} log\left|1-q\left(F^{(m)}+\rho_m\right)\right|_{mm},       \\
       1+\sum_{k=1}^\infty \Tilde{\Lambda}_k^{(m)}q^k=\left|1+q\left(-\Tilde{F}^{(m)}-\rho_m\right)\right|_{mm},      \\
   \sum_{k=1}^\infty\Tilde{\Phi}_k^{(m)}q^{k-1}=-\frac{d}{dq} log\left|1-q\left(-\Tilde{F}^{(m)}-\rho_m\right)\right|_{mm}.        \\    
          
    \end{array}
\end{equation*}
In addition, there is a graphical construction of  $\Lambda_k^{(m)}$, $\Phi_k^{(m)}$,  $\Tilde{\Lambda}_k^{(m)}$, $\Tilde{\Phi}_k^{(m)}$.

For a $M\times M$ matrix $A$, we consider  the complete oriented graph $\mathcal{A}$ with vertices {1,2,\ldots,M} and label the arrow from vertex $i$ to $j$ by $a_{ij}$. For each directed path of length $k$ in graph $\mathcal{A}$ which starts from vertex $i$ and ends at vertex $j$, we  define a monomial of the form $A_{i,r_1}A_{r_1,r_2}\cdots A_{r_{k-1,j}}$.   

Let $A=F^{(m)}+\rho_m$, then $(-1)^{k-1}\Lambda_k^{(m)}$ is the sum of all monomials labeling simple path in $\mathcal{A}$ of length $k$ going from $m$ to $m$.  $\Phi_{2k}^{(m)}$ is the sum of all monomials labeling path in $\mathcal{A}$ of length $k$ going from $m$ to $m$, the coefficient of each monomial being the ratio of $k$ to the number of returns to $m$. Similarly, $\Tilde{\Lambda}_k^{(m)}$ and $\Tilde{\Phi}_k^{(m)}$ are defined with $A=-\Tilde{F}^{(m)}-\rho_m$.

Now, we define $\Lambda_{2n}^{N+1}=\prod_{m=1}^n \Tilde{\Lambda}_1^{(m)} \Lambda_1^{(m)}$ and $\Phi_{2k}^{N+1}=\sum_{m=1}^n\left(\Tilde{\Phi}_{2k}^{(m)}+\Phi_{2k}^{(m)}\right)$.
\begin{Example}
\begin{enumerate}[{(1)}]
    \item \begin{equation*}
    \Phi_2^{N+1}=2\sum_{m=1}^n\left(\left(F_{mm}+\rho_m\right)^2+2\sum_{-m<i<m}F_{mi}F_{im}\right).
\end{equation*}
\item\begin{multline*}
   \Phi_4^{N+1}=2\sum_{m=1}^n\Bigg(\left(F_{mm}+\rho_m\right)^4+2\sum_{-m<i,j<m}F_{mi}F_{im}F_{mj}F_{jm}   \\
   +2\sum_{-m<i,j<m}\left(F_{mm}+\rho_m\right)F_{mi}\left(F_{ij}+\delta_{ij}\rho_m\right)F_{jm}+2\sum_{-m<i,j<m}F_{mi}\left(F_{ij}+\delta_{ij}\rho_m\right)F_{jm}\left(F_{mm}+\rho_m\right)
\\
    +4\sum_{-m<i,j<m}F_{mi}\left(F_{ij}+\delta_{i,j}\rho_m\right)\left(F_{ji}+\delta_{i,j}\rho_m\right)F_{im}
    +2\sum_{-m<i<m}F_{mi}F_{i-m}F_{-mi}F_{im}\\
    +\frac{4}{3}\sum_{-m<i<m}\left(F_{mi}F_{im}\left(F_{mm}+\rho_m\right)^2+\left(F_{mm}+\rho_m\right)^2F_{mi}F_{im}+\left(F_{mm}+\rho_m\right)F_{mi}F_{im}\left(F_{mm}+\rho_m\right)\right)\Bigg).   
\end{multline*}
\end{enumerate}
\end{Example}
 
In Chapter 7 of \cite{MR2355506}, the explicit generators for $Z(U(\mathfrak{so}_{N+1}))$  were identified  using the Harish-Chandra isomorphism with the ring of shifted symmetric polynomials.

\begin{Theorem}\cite{MR2355506}
When $N=2n$, $Z\left(U\left(\mathfrak{so}_{N+1}\right)\right)$ is generated by $1$ and $\left\{\Phi_{2k}^{N+1}\right\}_{k=1}^n$. When $N=2n-1$, $Z(U(\mathfrak{so}_{N+1}))$ is generated by $1$,  $\left\{\Phi_{2k}^{N+1}\right\}_{k=1}^{n-1}$ together with $\sqrt{\Lambda_{2n}^{N+1}}$. Images of $\Phi_{2k}^{N+1}$ and $\Lambda_{2n}^{N+1}$ under the Harish-Chandra isomorphism are 
\begin{equation*}
     \begin{array}{lr}
     \frac{\Phi_{2k}^{N+1}}{2} \xrightarrow{} l_1^{2k}+\cdots+l_{n}^{2k},   & 
      \Lambda_{2n}^{N+1}\xrightarrow[]{} (-1)^nl_1^2\cdots l_n^2,    
      \end{array}
\end{equation*}
where $l_i=\lambda_i+\rho_i$.
\end{Theorem}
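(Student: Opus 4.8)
This is Molev's description of the centre, and I would prove it via the Harish-Chandra isomorphism, as indicated in the text. Recall that the Harish-Chandra homomorphism gives an isomorphism $\chi\colon Z(U(\mathfrak{so}_{N+1}))\xrightarrow{\ \sim\ }S(\mathfrak{h})^{W}$, where $\mathfrak{h}$ is the Cartan subalgebra spanned by $F_{11},\dots,F_{nn}$, the Weyl group $W$ acts on $S(\mathfrak{h})$ through the $\rho$-shifted action, and after the substitution $l_i=\lambda_i+\rho_i$ this becomes the ordinary $W$-action on polynomials in $l_1,\dots,l_n$. For $N=2n$ (type $B_n$) $W$ is the full group of signed permutations, so by Chevalley's theorem $S(\mathfrak{h})^{W}=\mathbb{C}[p_1,\dots,p_n]$ with $p_k=l_1^{2k}+\dots+l_n^{2k}$; for $N=2n-1$ (type $D_n$) $W$ consists of signed permutations with an even number of sign changes, and $S(\mathfrak{h})^{W}=\mathbb{C}[p_1,\dots,p_{n-1},\pi]$ with $\pi=l_1\cdots l_n$, the remaining power sum $p_n$ being a polynomial in $p_1,\dots,p_{n-1}$ and $\pi^2$. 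Hence it is enough to show that the proposed elements are central with the stated images: $\chi(\Phi_{2k}^{N+1})=2p_k$ in the relevant range of $k$, and, in type $D_n$, $\chi(\Lambda_{2n}^{N+1})=(-1)^n\pi^2$. Granting this, surjectivity of $\chi$ together with the above descriptions of $S(\mathfrak{h})^{W}$ yields the claimed generators. In the $D_n$ case one observes in addition that $(-1)^n\pi^2=(\xi\pi)^2$ for a scalar $\xi$ with $\xi^2=(-1)^n$, that $\xi\pi$ lies in $S(\mathfrak{h})^{W}$ (it is $D_n$-invariant), and therefore — since $\chi$ is onto — that there is a unique central element, which we denote $\sqrt{\Lambda_{2n}^{N+1}}$, with $\chi(\sqrt{\Lambda_{2n}^{N+1}})=\xi\pi$; squaring and using injectivity of $\chi$ shows this element is genuinely a square root of $\Lambda_{2n}^{N+1}$, and together with $p_1,\dots,p_{n-1}$ its image generates $S(\mathfrak{h})^{W}$.

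\textbf{Centrality and the Harish-Chandra images.} This is the substantive part, and I would carry it out with the generating quasideterminants of the text. Centrality is not visible level by level — $\Phi_{2k}^{(m)}$ alone does not commute with the root vectors involving indices larger than $m$ — so the point is precisely that the chain sum $\Phi_{2k}^{N+1}=\sum_{m=1}^n(\tilde\Phi_{2k}^{(m)}+\Phi_{2k}^{(m)})$ (and the corresponding product $\Lambda_{2n}^{N+1}$) is the combination forced by a telescoping identity. Writing the commutation relations of $\mathfrak{so}_{N+1}$ in R-matrix form, the matrix $F^{(m)}+\rho_m$ satisfies a reflection-type (Sklyanin) relation, from which the associated quantum characteristic polynomials are generating functions of central elements and the size-$(2m+1)$ series is related to the size-$(2m-1)$ series by a factor supplied by the $\pm m$ rows and columns — this is what forces the sum over $m$. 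Granting centrality, the image $\chi(z)$ of a central $z$ is obtained by acting on an extreme weight vector $v_\lambda$: since $\mathfrak{n}^{+}v_\lambda=0$, the off-diagonal monomials in the graphical expansion either annihilate $v_\lambda$ or, after commuting, reduce to $U(\mathfrak{h})$-terms, and what survives is an explicit rational function of $l_1,\dots,l_n$ whose Taylor coefficients are $\chi(\Phi_{2k}^{N+1})=2p_k$ and $\chi(\Lambda_{2n}^{N+1})=(-1)^n\pi^2$; for $k=1$ this can be verified directly against the explicit formula for $\Phi_2^{N+1}$ in the Example, where on $v_\lambda$ each term $F_{mi}F_{im}$ with $-m<i<m$ contributes only through its diagonal part and the total collapses to $2\sum_m l_m^2$.

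\textbf{Main obstacle.} The crux is the previous paragraph: establishing centrality of these chain sums/products and pinning down their exact Harish-Chandra images, with the correct constants and signs. The cleanest framework I know is Molev's R-matrix / Sklyanin-determinant formalism for $\mathfrak{so}_N$ (Chapter 7 of \cite{MR2355506}), in which centrality is automatic and the eigenvalue on $v_\lambda$ is a Capelli / Perelomov--Popov-type product; the bookkeeping that turns that product into the power sums $p_k$ and the Pfaffian $\pi$ — and in particular recognises $\Lambda_{2n}^{N+1}$ as a perfect square in the centre — is the part that needs real care. The remaining ingredients, namely the Harish-Chandra isomorphism and the Chevalley description of the $B_n$ and $D_n$ invariant rings, are standard and enter only at the end.
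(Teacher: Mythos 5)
This statement is quoted from Molev's book \cite{MR2355506} and the paper offers no proof of its own beyond that citation, so the only comparison to make is with the cited source; your outline — the Harish-Chandra isomorphism onto the shifted $W$-invariants, Chevalley's description of the $B_n$ and $D_n$ invariant rings via $p_k=l_1^{2k}+\cdots+l_n^{2k}$ and $\pi=l_1\cdots l_n$, centrality and the eigenvalue (Perelomov--Popov/Capelli-type) computation delegated to Molev's quasideterminant/Sklyanin formalism, and the surjectivity-plus-injectivity argument producing a central square root of $\Lambda_{2n}^{N+1}$ — is a correct sketch of essentially that same argument. I see no gap at this level of detail, so nothing further to flag.
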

Thus, any $Y\in Z(U(\mathfrak{so}_{N+1}))$ acts as a multiplication by a scalar $p_Y(\lambda_1,\ldots,\lambda_n)$ in $V_\lambda$. When $N=2n$, $p_Y(\lambda_1,\ldots,\lambda_n)$ is symmetric in the variables $l_1^2,\ldots,l_n^2$. When $N=2n-1$,  $p_Y(\lambda_1,\ldots,\lambda_n)$ is the sum of a symmetric polynomial in $l_1^2,\ldots,l_n^2$ and $l_1\cdots l_n$ times a symmetric polynomial in $l_1^2,\ldots,l_n^2$.

\subsection{Random surface growth with a wall }

In this subsection, we provide more details about the model. 

Consider the two-dimensional lattice $\mathbb{Z}_{\ge 0}\times \mathbb{Z}_+$. On each horizontal level $\mathbb{Z}_+\times\{N\} $, there are $n=\left\lfloor\frac{N+1}{2}\right\rfloor$ particles. 

Denote the horizontal coordinates of all particles with vertical coordinate $N$ by $y_1^N>y_2^N>\cdots>y^N_n$. The reflecting wall forces $y^N_n\ge a+\frac{1}{2}$.   We have particle configurations
$$\{y_k^N\in \mathbb{Z}_{\ge 0}|k=1,2,\ldots,n; N=1,2,\ldots\}.  $$
Additionally, the particles satisfy the interlacing property $y_{k+1}^{N+1}<y_k^N<y_k^{N+1}$ for all meaningful values of $k$ and $N$.

The densely packed initial condition is given by the configuration where $y_k^N=N-2k+1$ for all $k$ and $N$, which means all the particles are as much to the left as possible. Each particle has two exponential  clocks of rate $\frac{1}{2}$, one is responsible for left jump and another for right jump. All clocks are independent. When the clock for particle $y_k^N$ rings, the particle tries to jump by $1$ in the corresponding direction. Right jumps are blocked if $y_k^N+1=y_{k-1}^{N-1}$ and left jumps are blocked if $y_k^N-1=y_k^{N-1}$. If the particle is against the wall (i.e. $y^N_n=0$) and the left jump clock rings, the particle is reflected and tries to jump to the right.

When $y_k^N$ tries to jump to the right and the jump is not blocked, we find the largest non-negative $r $ such that $y_k^{N+i}=y_k^N+i$ for $0\le i\le r$, and all particles $\{y_k^N\}_{i=0}^r$ jump to right  by $1$ simultaneously. If $y_k^N$ tries to jump to the left and is not blocked, we find the largest $l$ such that $y_{k+j}^{N+j}=y_k^N-j$ for $0\le j\le l$, and all particles $\{y_{k+j}^{N+j}\}_{j=0}^l$ jump to left by $1$.

In words, in order to maintain the interlacing property, the jump of one particle is blocked by particles below it, but if the jump is not blocked, the particle pushes particles above it to jump together. Additionally, jumps are reflected against the wall.

We then recall several equivalent ways to define the particle configurations of our model. 

It is  worth mentioning that the particle configurations of the above point process are obtained as an equivalent interpretation of paths of partitions \cite{borodin2011random}.

When $\lambda\in \mathbb{J}_{n,-}$ and $\mu\in \mathbb{J}_{n,+}$, we write $\lambda\prec\mu$ if $0\le \lambda_n\le \mu_n\le\ldots\le \lambda_1\le \mu_1$. When $\lambda\in \mathbb{J}_{n,+}$ and $\mu\in \mathbb{J}_{n+1,-}$, we write $\lambda\prec\mu$ if $0\le\mu_{n+1}\le \lambda_n\le \mu_n\le\ldots\le \lambda_1\le \mu_1$. Next, set $\mathbb{J}=\bigcup_{n\ge 1}\left(\mathbb{J}_{n,+}\cup\mathbb{J}_{n,-}\right)$.
 A path in $\mathbb{J}$ is a sequence $\mathbf{\lambda}=\left(\lambda^{(1),-}\prec\lambda^{(1),+}\prec\lambda^{(2),-}\prec\ldots\right)$ such that $\lambda^{(i),\pm}\in \mathbb{J}_{i,\pm}$. Last, let $\mathbb{J}^{N}$ to be the set of finite paths in $\mathbb{J}$ of length $N=2n-1/2+a$.

Set $\mathfrak{X}=\mathbb{Z}_{\ge 0}\times\mathbb{Z}_{>0}\times\{\pm \frac{1}{2}\}$ and $\mathfrak{Y}=\mathbb{Z}_{\ge 0}\times\mathbb{Z}_{>0}$ . $\mathfrak{Y}$ is equivalent to $\mathfrak{X}$ via the bijection sending $(x,n,a)\in \mathfrak{X}$ to $(2x+a+\frac{1}{2},2n+a-\frac{1}{2})\in\mathfrak{Y}$.

To any path $\mathbf{\lambda}=\left(\lambda^{(1),-}\prec\lambda^{(1),+}\prec\lambda^{(2),-}\prec\ldots\right)\in\mathbb{J}$, we associate  point configurations in $\mathfrak{X}$ and $\mathfrak{Y}$ as follows:
\begin{equation*}
    \mathcal{L}_{\mathfrak{X}}(\mathbf{\lambda})=\left\{\left(x_k^{n,a},n, a\right):\ 1\le k\le n,\ a\in\left\{\pm 1/2\right\},\ n\ge 1\right\},
\end{equation*}
\begin{equation*}
    \mathcal{L}_{\mathfrak{Y}}(\mathbf{\lambda})=\left\{\left(y_k^{2n-1/2+a},2n-1/2+a\right):\ 1\le k\le n,\ a\in\left\{\pm 1/2\right\},\ n\ge 1\right\},
\end{equation*}
where $x_k^{n,a}=\lambda_k^{n,a}+n-k$ and $y^{2n-1/2+a}_k=2(\lambda_k^{n,a}+n-k)+a+1/2$. In what follows, we interchange the use of $x_k^{n,a}$ and $y^{2n-1/2+a}_k$ for ease of notation.

We also adopt the notations and definitions from \cite{borodin2011random} as follows.

Define $T_{n,a}^{\varphi_t}$ on $\mathbb{J}_n\times\mathbb{J}_n$ by
\begin{equation*}
    T_{n,a}^{\varphi_t}(\mu,\lambda)=\text{det}\left[I_a^{\varphi_t}(\mu_i-i+n,\lambda_j-j+n)\right]_{1\le i,j\le n}\frac{\text{dim}_{SO(2n+1/2+a)}\lambda}{\text{dim}_{SO(2n+1/2+a)}\mu},
\end{equation*}
where $I_a^{\varphi_t}$ is given by
\begin{equation*}
    I_a^{\varphi_t}(l,i)=\frac{W^{\left(a,-\frac{1}{2}\right)}(i)}{\pi}\int_{-1}^1\mathcal{J}_i^{(a,-\frac{1}{2})}(x)\mathcal{J}_l^{(a,-\frac{1}{2})}(x)\varphi_t(x)(1-x)^a(1+x)^{-1/2}dx.
\end{equation*}
Define the matrix $T^{n,+}_{n,-}$ on $\mathbb{J}_{n,+}\times\mathbb{J}_{n,-}$ by
\begin{equation*}
  T^{n,+}_{n,-}(\mu,\lambda)=\text{det}\left[\phi^{+}_{-}(\mu_i-i+n,\lambda_j-j+n)\right]_{1\le i,j\le n}\frac{\text{dim}_{SO(2n)}\lambda}{\text{dim}_{SO(2n+1)}\mu},  
\end{equation*}
where
\begin{equation*}
   \phi^{+}_{-}(x,y)=\left\{\begin{array}{cc}
      1   & x\ge y=0,\\
       2  & x\ge y>0,\\
       0 & x<y.
   \end{array}\right. 
\end{equation*}
Define the matrix $T^{n,-}_{n-1,+}$ on $\mathbb{J}_{n,-}\times\mathbb{J}_{n-1,+}$ by
\begin{equation*}
  T^{n,-}_{n-1,+}(\mu,\lambda)=\text{det}\left[\phi^{-}_{+}(\mu_i-i+n,\lambda_j-j+n-1)\right]_{1\le i,j\le n}\frac{\text{dim}_{SO(2n-1)}\lambda}{\text{dim}_{SO(2n)}\mu},  
\end{equation*}
where
\begin{equation*}
    \phi^{-}_{+}(x,y)=\left\{\begin{array}{cc}
      1   & x> y,\\
      0   & x\le y.
   \end{array}\right. 
\end{equation*}

For each measure $P^\omega_{n,a}$ in \eqref{eq: 6}, we define the measure
\begin{equation}\label{eq: 2}
P^{N,\omega}=P^\omega_{n,a}\left(\lambda^{(n),a}\right)T^{n,a}_{n-1/2+a,-a}\left(\lambda^{(n),a},\lambda^{(n-1/2+a),-a}\right)\cdots T^{2,-}_{1,+}\left(\lambda^{(2),-1/2},\lambda^{(1),1/2}\right)T^{1,+}_{1,-}\left(\lambda^{(1),1/2},\lambda^{(1),-1/2}\right)
\end{equation}
on $\mathbb{J}^{N}$. When $\omega=(0,0,0)$,  $P^{N,\omega}$ is the delta measure at the densely packed initial condition, i.e. $x^{m,a}_k=m-k$ for all $1\le k\le m$ and $1\le m\le n$.

Define a stochastic matrix $A^{\varphi_t}_{N}$ on $\mathbb{J}^{N}\times\mathbb{J}^{N} $ by  
\begin{multline*}
   A_{N}^{\varphi_t}(\mathbf{\lambda},\mathbf{\mu})= \frac{T^{\varphi_t}_{n,a}\left(\lambda^{(n),a},\mu^{(n),a})T^{n,a}_{n-1/2+a,-a}(\mu^{(n),a},\mu^{(n-1/2+a),-a}\right)}{T^{n,a}_{(n-1/2+a),-a}T^{\varphi_t}_{(n-1/2+a),-a}\left(\lambda^{(n),a},\mu^{(n-1/2+a),-a}\right)}\times\cdots      \\
\cdots\frac{T^{\varphi_t}_{1,+}\left(\lambda^{(1),1/2},\mu^{(1),1/2}\right)T^{1,+}_{1,-}\left(\mu^{(1),1/2},\mu^{(1),-1/2}\right)}{T^{1,+}_{1,-}T^{\varphi_t}_{1,-}\left(\lambda^{(1),1/2},\mu^{(1),-1/2}\right)}  T^{\varphi_t}_{1,-}\left(\lambda^{(1),-1/2},\mu^{(1),-1/2}\right).
\end{multline*}

If we let $Q_{N}$ be the generator of the random surface growth with a wall  restricted to $\mathbb{J}^N$, then the continuous-time Markov chain can be characterized by matrix $A^{\varphi_t}_{N}$.
\begin{Theorem}(Theroem 3.12 in \cite{borodin2011random})
Let $\varphi_t(x)=e^{t(x-1)}$, then
\begin{equation*}
    e^{tQ_{N}}P^{N,\omega}=A^{\varphi_t}_{N} P^{N,\omega}.
\end{equation*}
\end{Theorem}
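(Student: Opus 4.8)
The plan is to recognize this identity as an instance of the Borodin–Ferrari mechanism that assembles a consistent multilevel Markov dynamics out of one-level transition kernels which commute with the interlacing links (cf. \cite{2013,borodin2011random}), and then to match the resulting semigroup with $e^{tQ_N}$. I would proceed in three steps. First I analyze the one-level kernels: up to ratios of dimensions, $T^{\varphi_t}_{n,a}$ and the deterministic links $T^{n,+}_{n,-}$, $T^{n,-}_{n-1,+}$ are determinants of scalar kernels ($I^{\varphi_t}_a$ and the step kernels $\phi^+_-,\phi^-_+$). Using the orthogonality and completeness of the normalized Jacobi polynomials $\mathcal{J}^{(a,-1/2)}_i$ on $[-1,1]$ against the weight $(1-x)^a(1+x)^{-1/2}$, together with $\varphi_t\varphi_s=\varphi_{t+s}$, one obtains $\sum_j I^{\varphi_t}_a(l,j)I^{\varphi_s}_a(j,i)=I^{\varphi_{t+s}}_a(l,i)$; Cauchy–Binet and telescoping of the dimension ratios then upgrade this to the semigroup property $T^{\varphi_t}_{n,a}T^{\varphi_s}_{n,a}=T^{\varphi_{t+s}}_{n,a}$ on each level. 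Since $\tfrac{d}{dt}\big|_{0}\varphi_t(x)=x-1$, differentiation identifies the one-level generator with a nearest-neighbour birth–death chain reflected at the wall; and since $E^\omega\varphi_t=E^{\omega_t}$ with $\omega_t=(\alpha,\beta,\delta+t)$, the kernel $T^{\varphi_t}_{n,a}$ sends $P^\omega_{n,a}$ to $P^{\omega_t}_{n,a}$.

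The second, and I expect the hardest, step is the intertwining
\[
T^{\varphi_t}_{n,a}\,T^{n,a}_{n-1/2+a,-a}=T^{n,a}_{n-1/2+a,-a}\,T^{\varphi_t}_{n-1/2+a,-a},
\]
i.e. that the one-level dynamics on adjacent levels commute through the deterministic link. At the scalar-kernel level this is a family of Jacobi-polynomial identities relating $I^{\varphi_t}_a$, $\phi^+_-$ or $\phi^-_+$, and $I^{\varphi_t}_{-a}$; once these hold, Cauchy–Binet again produces the matrix identity and, combined with Step 1, the consistency of the boundary measures $P^\omega_{n,a}$ with the links. This is where the reflecting wall enters in an essential way: $\phi^+_-$ is "doubled" on positive values — the kernel-level shadow of the fact that the associated $L$-ensemble carries virtual variables only on even levels — and verifying the identity, including the boundary contribution at $0$, is the main obstacle.

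With these two ingredients the formula defining $A^{\varphi_t}_N$ is precisely the multilevel kernel assembled from the one-level semigroups $\{T^{\varphi_t}_{m,b}\}$ and the links, and the general construction then yields: $A^{\varphi_t}_N$ is stochastic on $\mathbb{J}^N$ (the denominators being exactly the normalizations forced by the intertwining, the row sums converging to $1$), $A^{\varphi_t}_N A^{\varphi_s}_N=A^{\varphi_{t+s}}_N$, and $A^{\varphi_t}_N$ preserves the class of Gibbs-type measures "top-level measure $\times$ the standard chain of links" of \eqref{eq: 2}, acting by evolving the top marginal under $T^{\varphi_t}_{n,a}$ and reattaching the links; in particular $A^{\varphi_t}_N P^{N,\omega}=P^{N,\omega_t}$. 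It remains to identify the generator: differentiating $t\mapsto A^{\varphi_t}_N$ at $t=0$ produces, besides the independent one-level jumps of Step 1, exactly the "pushing" contributions that propagate a jump at a lower level up through the maximal run of coincidences, together with the reflection at $0$ from the boundary term of the one-level generator — this is the standard output of the construction, and it matches term by term the description of the random surface growth (blocked from below, pushing from above, reflected at the wall). Hence $\tfrac{d}{dt}\big|_{0}A^{\varphi_t}_N=Q_N$; since $t\mapsto e^{tQ_N}$ and $t\mapsto A^{\varphi_t}_N$ are then Markov semigroups on $\mathbb{J}^N$ with the same generator, they coincide, which in particular gives $e^{tQ_N}P^{N,\omega}=A^{\varphi_t}_N P^{N,\omega}$.
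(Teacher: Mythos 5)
The paper itself does not prove this statement; it is quoted from Theorem 3.12 of \cite{borodin2011random}, so your sketch has to be measured against the original argument there (and the general construction of \cite{2013}). Your first two steps match that route: the one-level semigroup property of $T^{\varphi_t}_{n,a}$ via the Jacobi-polynomial identities (Lemma \ref{Lemma: 2}), the observation $E^{\omega}\varphi_t=E^{\omega_t}$ with $\omega_t=(\alpha,\beta,\delta+t)$, and the intertwining of $T^{\varphi_t}$ with the links $T^{n,a}_{n-1/2+a,-a}$ are exactly the commutation relations proved in \cite{borodin2011random}; your conclusion that $A^{\varphi_t}_N P^{N,\omega}=P^{N,\omega_t}$ is also correct.

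The gap is in your final step. You assert that $\{A^{\varphi_t}_N\}_{t\ge 0}$ is a Markov semigroup and, since its derivative at $t=0$ is $Q_N$, that it coincides with $e^{tQ_N}$ as matrices. The semigroup property $A^{\varphi_t}_N A^{\varphi_s}_N=A^{\varphi_{t+s}}_N$ is not part of the Diaconis--Fill/Borodin--Ferrari construction and fails in general: in $A^{\varphi_{t+s}}_N$ the update of level $k$ is conditioned only on the new level-$(k-1)$ configuration at the final time, whereas in $A^{\varphi_t}_N A^{\varphi_s}_N$ it is also conditioned on the lower level at the intermediate time, and starting from a delta mass at a generic (non-Gibbs) configuration the two kernels give different laws. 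For the same reason, matrix equality $e^{tQ_N}=A^{\varphi_t}_N$ is strictly stronger than the stated theorem (an identity of measures on the Gibbs family $P^{N,\omega}$ only) and is false: the true dynamics depends on the whole lower-level trajectory, while the one-shot kernel depends only on its endpoint. Hence matching generators at $t=0$ does not identify the two objects, and the actual content of Theorem 3.12 — that $e^{tQ_N}$ preserves the class of measures \eqref{eq: 2} and acts on it by $\omega\mapsto\omega_t$ — is left unproved in your sketch. The standard way to close this is either to approximate $e^{tQ_N}$ by iterates of small Bernoulli-step sequential-update chains, each of which preserves measures of the form \eqref{eq: 2} (this is where your Steps 1--2 enter) and converges to the push-block dynamics as the step size vanishes, or to verify directly that $t\mapsto P^{N,\omega_t}$ solves the Kolmogorov forward equation for $Q_N$ and invoke uniqueness; your computation $\frac{d}{dt}\big|_{0}A^{\varphi_t}_N=Q_N$ is a useful consistency check but cannot substitute for this step.
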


\section{Correlations along space-like paths}
In this section, we compute the multi-time correlation functions for the Markov process on $\mathbb{J}^{N}$.

\begin{Proposition}\label{prop: 1}
 Consider the evolution of the measure $P^{N,\omega}$
on $\mathbb{J}^{N}$ defined in \eqref{eq: 2} under the Markov chain $A^{\varphi_t}_{N}$, and denote by $\left(x^{1,-1/2}(t),\ldots,x^{n,a}(t)\right)$ the result after time $t\ge 0$. Then for any 
\begin{equation*}
    0=t^{N}_0\le \cdots \le t^{N}_{c(N)}= t^{N-1}_{0}\le \cdots\le t^{N-1}_{c(N-1)}=\\
    t^{N-2}_{0}\le \cdots\le t^2_{c(2)}=t^1_0\le \cdots\le t^1_{c(1)},
\end{equation*}
where $c(i)$ are arbitrary non-negative integers, the joint distribution of
\begin{multline*}
      x^{n,a}\left(t^{N}_0\right),\ldots x^{n,a}\left(t^{N}_{c(N)}\right), x^{n-1/2+a,-a}\left(t^{N-1}_0\right),\ldots\\ 
       \ldots, x^{n-1/2+a,-a}\left(t^{N-1}_{c(N-1)}\right),x^{n-3/2-a,a}\left(t^{N-2}_{0}\right),\ldots, x^{1,1/2}\left(t^2_{c(2)}\right),x^{1,-1/2}\left(t^1_0\right),\ldots,x^{1,-1/2}\left(t^1_{c(1)}\right) 
\end{multline*} 
coincides with the stochastic evolution of $P^{N,\omega}$ under transition matrices
\begin{multline*}
        T^{\varphi_{t^{N}_1-t^{N}_0}}_{n,a},\ldots,T^{\varphi_{t^{N}_{c(N)}-t^{N}_{c(N)-1}}}_{n,a},T^{n,a}_{n-1/2+a,-a}, T^{\varphi_{t^{N-1}_1-t^{N-1}_0}}_{n-1/2+a,-a},\ldots,\\
        \ldots T^{\varphi_{t^{N-1}_{c(N-1)}-t^{N-1}_{c(N-1)-1}}}_{n-1/2+a,-a},T^{n-1/2+a,-a}_{n-3/2-a,a}, T^{\varphi_{t^{N-2}_1-t^{N-2}_0}}_{n-3/2-a,a},\ldots,T^{\varphi_{t^{1}_{c(1)}-t^{1}_{c(1)-1}}}_{1,+}.
\end{multline*}

\end{Proposition}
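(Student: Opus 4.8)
The plan is to reduce Proposition~\ref{prop: 1} to a general fact about consistent Markovian dynamics on interlacing arrays, in the spirit of the ``multilevel dynamics / pushforward'' machinery used in \cite{2013} and \cite{borodin2011random}. The starting point is the observation that the stochastic matrix $A_N^{\varphi_t}$ is built out of the single-level transition matrices $T_{m,a}^{\varphi_t}$ and the deterministic ``up'' links $T^{m,a}_{m-1/2+a,-a}$ precisely so that it satisfies the commutation (intertwining) relation
\begin{equation*}
  T^{m,a}_{m-1/2+a,-a}\, T^{\varphi_t}_{m-1/2+a,-a}
  = T^{\varphi_t}_{m,a}\, T^{m,a}_{m-1/2+a,-a},
\end{equation*}
and similarly for the other type of link; this is exactly the condition under which $A_N^{\varphi_t}$ projects to $T^{\varphi_t}_{m,a}$ on each level and the whole construction is consistent. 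I would first record these intertwinings explicitly (they are immediate from the definitions of $\phi^+_-$, $\phi^-_+$, $I_a^{\varphi_t}$ and the semigroup property $\varphi_s\varphi_t=\varphi_{s+t}$, together with $T^{m,a}_{m-1/2+a,-a}$ being a deterministic Markov kernel in the variable it maps from).

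Next I would set up the induction on the total number of time instants $\sum_i c(i)$, or equivalently peel off one time-update at a time from the top level down. The key structural point is that in the list of transition matrices appearing in the conclusion, the matrices are grouped level by level: all the $T^{\varphi}_{n,a}$'s come first, then the single link $T^{n,a}_{n-1/2+a,-a}$, then all the $T^{\varphi}_{n-1/2+a,-a}$'s, and so on. So I want to show that running $A_N^{\varphi_{t_1}}, A_N^{\varphi_{t_2}},\dots$ on $\mathbb{J}^N$ and reading off the top-level coordinate $x^{n,a}$ at the successive times, then reading off $x^{n-1/2+a,-a}$ at its times, etc., produces exactly the push-forward of $P^{N,\omega}$ under that ordered product. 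The mechanism is the standard one: because $A_N^{\varphi_t}$ intertwines with the top-level kernel $T^{\varphi_t}_{n,a}$ via the projection to level $n$, the marginal evolution of the top row under $A_N^{\varphi_t}$ is Markov with kernel $T^{\varphi_t}_{n,a}$; and conditionally on the whole top-row trajectory, the remaining array evolves as the corresponding dynamics one level down, with the top row acting as a (time-dependent) boundary — this is the content of the ``$A_N$ is a composition of level-by-level conditionally independent updates'' picture. Iterating this down the levels, and using that the down-links $T^{m,a}_{m-1/2+a,-a}$ are exactly the conditional kernels that appear once between the blocks, assembles the claimed ordered product of transition matrices acting on $P^{N,\omega}$.

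Concretely, the cleanest route is: (i) show that one step of $A_N^{\varphi_t}$ applied to a measure of the Gibbs/consistent form $P^{N,\cdot}$ again has that form, with top marginal updated by $T^{\varphi_t}_{n,a}$ — this is essentially Theorem~3.12 of \cite{borodin2011random} restated for a general consistent measure rather than $e^{tQ_N}$; (ii) observe that the joint law of $\big(x^{n,a}(t^N_0),\dots,x^{n,a}(t^N_{c(N)})\big)$ together with the state of the lower array at time $t^N_{c(N)}$ is obtained by composing $T^{\varphi_{t^N_1-t^N_0}}_{n,a},\dots,T^{\varphi_{t^N_{c(N)}-t^N_{c(N)-1}}}_{n,a}$ on the top row while carrying the consistent conditional law below — here one uses that the times are ordered so that all top-level updates happen before we ``freeze'' and look below; (iii) at time $t^N_{c(N)}=t^{N-1}_0$ the conditional law of the level-$(N-1)$ row given the level-$N$ row is exactly $T^{n,a}_{n-1/2+a,-a}$, which is where that single link enters the product; (iv) repeat with $n$ replaced by $n-1/2+a$, etc., down to level $1$. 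The main obstacle I anticipate is step (ii)/(iii): making precise, with full rigour, the claim that conditioning on the entire trajectory of an upper level leaves the lower levels evolving by the ``same kind'' of dynamics with that trajectory as a moving wall, and that the interlacing/pushing interactions in $A_N^{\varphi_t}$ decouple in exactly this triangular way. This is morally the commutation-relation argument of \cite{2013} (and of Borodin--Ferrari-type multilayer constructions), but one must check it goes through verbatim despite the two-particles-per-even-level feature of this model — i.e.\ that the generalized $L$-ensemble structure introduced in Section~3 is compatible with it. Once that commutation bookkeeping is in hand, the rest is a finite induction with no further analytic input.
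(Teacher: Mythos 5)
Your proposal takes essentially the same route as the paper, whose proof consists of citing Proposition 2.5 of \cite{2013} and Proposition 3.6 of \cite{borodin2011random} — precisely the intertwining/commutation machinery for consistent multilevel dynamics (level-by-level projection plus the Gibbs links entering once between blocks) that you outline. One minor correction: the links $T^{n,a}_{n-1/2+a,-a}$ are stochastic Gibbs (conditional-uniform-type) kernels, not deterministic ones, but this does not affect the structure of your argument.
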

\begin{proof}
See Proposition 2.5 of \cite{2013} and Proposition 3.6 of \cite{borodin2011random}.
\end{proof}

\begin{Definition}
For any $M\ge 1$, pick $M$ points
\begin{equation*}
    \varkappa_j=(n_j,a_j,t_j,s_j)\in \mathbb{Z}_{> 0}\times\{\pm \frac{1}{2}\}\times \mathbb{R}_{\ge 0} \times\mathbb{Z}_{\ge 0} ,
\end{equation*}
Given the Markov process $x(t)$, The $M$-th correlation function $\rho_M$ at $(\varkappa_1,\ldots,\varkappa_M)$ is defined as 
\begin{multline*}
    \rho(\varkappa_1,\ldots,\varkappa_M)=\text{Prob}\Big(\text{For each }1\le j\le M, \text{ there exists a } k_j,
    1\le k_j\le n_j, \text{ such that } x_{k_j}^{n_j,a_j}(t_j)=s_j\Big) . 
\end{multline*}
  
\end{Definition}

Next we introduce  a partial order on  $(n,a,t)$ via $(n_1,a_1,t_1)\prec (n_2,a_2,t_2)$ if  $2n_1-1/2+a_1\le 2n_2-1/2+a_2$, $t_1\ge t_2$ and $(n_1,a_1,t_1)\neq (n_2,a_2,t_2)$.

\begin{Theorem}\label{thm: 1}
Consider the Markov process with the initial distribution $P^{N,\omega}$ given by \eqref{eq: 2},  let $\varphi_t(x)=e^{t(x-1)}$. Assume any two distinct pairs in  $\left\{\varkappa_j=(n_j,a_j,t_j,s_j)|1\le j\le M\right\}$  are comparable with respect to $\prec$. Then
\begin{equation}
     \rho(\varkappa_1,\cdots,\varkappa_M)=\text{det}\left[K(\varkappa_i,\varkappa_j)\right]_{i,j=1}^M,
\end{equation}
where $ K(\varkappa_i,\varkappa_j)$ is the correlation kernel given by

%When $(n_1,a_1,t_1)\preccurlyeq (n_2,a_2,t_2)$,
%\begin{multline*}
 %   K(n_1,a_1,t_1,s_1;n_2,a_2,t_2,s_2)= \\ \frac{W^{(a_1,-\frac{1}{2})}(s_1)}{\pi}\frac{1}{2\pi i}\int_{-1}^1\oint\mathcal{J}_{s_1}^{(a_1,-\frac{1}{2})}(y)\mathcal{J}_{s_2}^{(a_2,-\frac{1}{2})}(u)
 %\frac{\textcolor{red}{E^\omega(y)}}{\textcolor{red}{E^\omega(u)}} \frac{e^{t_1(y-1)}}{e^{t_2(u-1)}}
 % \frac{(y-1)^{n_1}}{(u-1)^{n_2}}
  %\frac{(1-y)^{a_1}(1+y)^{-\frac{1}{2}}}{y-u} dudy.  
%\end{multline*}      

%When $(n_1,a_1,t_1)\succ (n_2,a_2,t_2)$,
\begin{multline*}
   K(\varkappa_i,\varkappa_j)=  \\ \frac{W^{\left(a_1,-\frac{1}{2}\right)}(s_1)}{\pi}\frac{1}{2\pi i}\int_{-1}^1\oint_C\mathcal{J}_{s_1}^{\left(a_1,-\frac{1}{2}\right)}(y){J}_{s_2}^{\left(a_2,-\frac{1}{2}\right)}(u)
 \frac{E^\omega(y)}{E^\omega(u)} \frac{e^{t_1(y-1)}}{e^{t_2(u-1)}}
  \frac{(y-1)^{n_1}}{(u-1)^{n_2}}
  \frac{(1-y)^{a_1}(1+y)^{-1/2}}{y-u} dudy\\
    +\mathbb{1}_{(n_1,a_1,t_1)\succ (n_2,a_2,t_2)}\frac{W^{\left(a_1,-\frac{1}{2}\right)}(s_1)}{\pi}\int_{-1}^1\mathcal{J}_{s_1}^{\left(a_1,-\frac{1}{2}\right)}(y){J}_{s_2}^{\left(a_2,-\frac{1}{2}\right)}(y)
  \frac{e^{t_1(y-1)}}{e^{t_2(y-1)}}
  (y-1)^{n_1-n_2}
  (1-y)^{a_1}(1+y)^{-1/2}  dy .
\end{multline*}
The $u$-contour $C$ is a positively oriented simple loop that encircles the interval
$[-1, 1]$ but does not encircle any zeroes of $E^\omega$.
\end{Theorem}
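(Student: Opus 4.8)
\textit{Plan of proof.} The strategy is to express the multi-time joint law along the space-like path as a (generalized) L-ensemble and read off the kernel from an Eynard--Mehta type formula. First I would use Proposition~\ref{prop: 1} to replace the joint distribution of $\bigl(x^{n,a}(t^N_0),\dots,x^{1,-1/2}(t^1_{c(1)})\bigr)$ by the law of a single sequential Markov evolution of $P^{N,\omega}$ through the explicit string of transition matrices $T^{\varphi}_{n,a},\ T^{n,a}_{n-1/2+a,-a},\ T^{\varphi}_{n-1/2+a,-a},\dots$ listed there. Inserting the determinantal formulas for $P^\omega_{n,a}$ from \eqref{eq: 6} and for every $T$-matrix (each a determinant times a ratio of $\dim_{SO}$'s), and observing that the $\dim_{SO}$ ratios telescope, the resulting measure on the whole recorded configuration becomes
\[
\mathrm{const}\cdot\det\!\bigl[\psi^{n,a}_{n-j}(x^{\mathrm{top}}_i)\bigr]_{i,j}\ \ \prod_{\text{consecutive recorded levels }r}\det\!\bigl[\phi_r(x^{(r)}_i,x^{(r+1)}_j)\bigr]_{i,j},
\]
where the $\phi_r$ range over $I^{\varphi}_a$ (time steps), $\phi^{+}_{-}$ and $\phi^{-}_{+}$ (level links). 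The structural novelty, which is exactly the ``generalized L-ensemble'' flagged in the introduction, is that the two recorded levels $2n-1/2\pm 1/2$ carry the same number $n$ of particles, so the standard device of adjoining one virtual variable at every level (cf.\ \cite{BCF-Aniso,Borodin_2007}) must be applied only at the even levels. I would isolate this as a lemma, proved by the Lindstr\"om--Gessel--Viennot/Eynard--Mehta argument adapted to this particle-count pattern; its output is that the process is determinantal with a kernel of the shape
\[
K(\varkappa_i,\varkappa_j)=-\phi^{(\varkappa_i\to\varkappa_j)}(s_i,s_j)\,\mathbb{1}_{(n_i,a_i,t_i)\succ(n_j,a_j,t_j)}+\sum_{k,l}\Psi_{\varkappa_i,k}(s_i)\,[M^{-1}]_{kl}\,\Phi_{\varkappa_j,l}(s_j),
\]
where $\phi^{(\varkappa_i\to\varkappa_j)}$ is the composition of the chain kernels from $\varkappa_i$ to $\varkappa_j$, $\Psi_{\varkappa,k}$ is the $k$-th virtual mode pushed forward along the chain to the level of $\varkappa$, $\Phi_{\varkappa,l}$ is $\psi^{\cdot}_{\cdot-l}$ transported to that level, and $M$ is the Gram matrix of the pairings between the $\psi$'s and the chain kernels.

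The heart of the matter is to evaluate and invert $M$. The key mechanism is that, by the orthogonality of the scaled Jacobi polynomials $\mathcal J^{(a,-1/2)}_k$ with respect to the weight $(1-x)^{a}(1+x)^{-1/2}$ on $[-1,1]$ (with the $W^{(a,-1/2)}$ normalization), the time-step kernel $I^{\varphi_t}_a$ acts on the ``polynomial side'' as plain multiplication by $\varphi_t(x)=e^{t(x-1)}$, and the semigroup identity $\varphi_t\varphi_s=\varphi_{t+s}$ makes consecutive time steps compose correctly; similarly $\phi^{+}_{-}$ and $\phi^{-}_{+}$ realize the passage $\mathcal J^{(a,-1/2)}_k\mapsto\mathcal J^{(-a,-1/2)}_k$ between adjacent Jacobi families up to an explicit multiplicative factor. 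Running these identities through, $M$ reduces to an essentially unipotent (triangular) matrix whose inverse is explicit, and the double sum $\sum_{k,l}\Psi[M^{-1}]\Phi$ telescopes to a single summation over one index carrying $\mathcal J^{(a_i,-1/2)}_{s_i}(y)$ on one side and $\psi^{n_j,a_j}$ on the other.

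It then remains to convert the residual summation into the $u$-contour integral: inserting the Cauchy-kernel expansion $\sum_{l\ge 0}(y-1)^{l}(u-1)^{-l-1}=\tfrac{1}{y-u}$, valid for $u$ on a loop $C$ around $[-1,1]$ in the region where $1/E^\omega(u)$ admits the required expansion, produces the stated double integral; this is precisely why $C$ must encircle $[-1,1]$ but no zeroes of $E^\omega$. The factor $E^\omega(y)/E^\omega(u)$ comes from $P^{N,\omega}$ via $\psi^{n,a}_{n-j}$, the factor $e^{t_1(y-1)}/e^{t_2(u-1)}$ from the $\varphi_t$ time steps, and $(y-1)^{n_1}/(u-1)^{n_2}$ from the level links together with the $(x-1)^{n-l}$ inside $\psi$. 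For the remaining term, when $(n_i,a_i,t_i)\succ(n_j,a_j,t_j)$, the convolution $\phi^{(\varkappa_i\to\varkappa_j)}$ of the intervening chain kernels is evaluated directly by the same Jacobi-orthogonality and semigroup identities and reproduces, up to the overall sign, the single integral $\tfrac{W^{(a_1,-1/2)}(s_1)}{\pi}\int_{-1}^{1}\mathcal J^{(a_1,-1/2)}_{s_1}(y)J^{(a_2,-1/2)}_{s_2}(y)e^{(t_1-t_2)(y-1)}(y-1)^{n_1-n_2}(1-y)^{a_1}(1+y)^{-1/2}\,dy$ appearing in the statement.

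I expect the main obstacles to be: (i) formulating and proving the generalized L-ensemble so that the pairs of equal-particle-count levels are handled without spurious virtual variables, and checking that all $\dim_{SO}$ prefactors and $W^{(a,-1/2)}$ normalizations telescope or cancel; (ii) the explicit inversion of the Gram matrix $M$ and the bookkeeping that identifies the resulting finite sum with the residue expansion of the double contour integral; and (iii) tracking the reflecting-wall boundary at $x=0$, where $\phi^{+}_{-}$ takes the value $1$ rather than $2$ and $W^{(a,-1/2)}$ jumps --- the one place the argument genuinely departs from the wall-free case of \cite{2013,kuan2014threedimensional}. With these in hand, the reduction to the displayed kernel is routine orthogonal-polynomial manipulation, and specializing to a common time on a single level recovers the single-time kernel of \cite{borodin2011random}.
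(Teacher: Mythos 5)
Your overall architecture is the paper's: pass to the product-of-determinants form of the joint law via Proposition~\ref{prop: 1} (with virtual variables only at the $\phi^{-}_{+}$ links), invoke an Eynard--Mehta/generalized L-ensemble lemma to get a kernel of the form ``$-\phi$ plus a biorthogonal sum'', and evaluate everything with the Jacobi-orthogonality identities of Lemma~\ref{Lemma: 2} and a Cauchy-kernel resummation into the double contour integral. However, there is a genuine gap in how you assemble the two terms of the stated kernel. You claim the single-integral term carrying $\mathbb{1}_{(n_1,a_1,t_1)\succ(n_2,a_2,t_2)}$ is just $-\phi^{(\varkappa_i\to\varkappa_j)}$, the composition of the chain's transition kernels, ``up to the overall sign''. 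It is not: when $(n_1,a_1,t_1)\succ(n_2,a_2,t_2)$ one has $t_1\le t_2$ and $n_1\ge n_2$, and any composition of the chain kernels between these slots carries the forward time factor $e^{(t_2-t_1)(y-1)}$ and, across level drops, nested contour integrals (Proposition~\ref{Prop: 3}); the theorem's extra term instead carries the \emph{reversed} factor $e^{(t_1-t_2)(y-1)}$ together with the polynomial factor $(y-1)^{n_1-n_2}$, so it cannot be obtained from $\phi$ by any sign or transposition bookkeeping. In the paper's derivation that term is the residual piece of the biorthogonal sum $\sum_{k}\psi_{n_1-k}\Phi_{n_2-k}$ in the case $n_1\ge n_2$ (first case of Proposition~\ref{Prop: 2}), while the direct-transition term in the Eynard--Mehta formula vanishes identically in this regime.

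Symmetrically, your account of the opposite regime is missing the cancellation that actually produces the clean double integral: when $(n_1,a_1,t_1)\nsucc(n_2,a_2,t_2)$ (so $t_1>t_2$, $2n_1-1/2+a_1\le 2n_2-1/2+a_2$) the biorthogonal sum does \emph{not} reduce to the double integral alone — it produces an extra additive copy of $\phi^{t^{2n_2-1/2+a_2}_{b_2},t^{2n_1-1/2+a_1}_{b_1}}(s_1,s_2)$ (second case of Proposition~\ref{Prop: 2}, proved via the induction of Lemma~\ref{Lemma: 3}), and it is precisely this copy that cancels the $-\phi$ term of Lemma~\ref{Lemma: 1}. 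So the correct bookkeeping is: $\phi$ contributes only when $\varkappa_1\prec\varkappa_2$ and is cancelled there, and the $\mathbb{1}_{\succ}$ single integral is generated by the $\psi$--$\Phi$ pairing, not by the chain kernel. Your step ``the Gram matrix is triangular with explicit inverse, and the double sum telescopes to the double integral'' silently assumes the extra term is never produced, which is exactly where the argument breaks; the rest of the proposal (telescoping of the $\dim_{SO}$ prefactors, multiplication-operator picture for $I^{\varphi_t}_a$, the $W^{(a,-1/2)}$ and wall normalizations, and the choice of the $u$-contour avoiding zeroes of $E^\omega$) is consistent with the paper.
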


\begin{Remark}
The correlation kernel in Theorem \ref{thm: 1} degenerates to the kernel in Theorem 4.1 of \cite{borodin2011random} when $t_1=t_2$, but with slightly different notations: In \cite{borodin2011random}, the time  was implicitly written in the parameter $\omega$, where they introduced a parameter $\gamma=\delta-\sum_{i=1}^\infty(\alpha_i+\beta_i)$ in the definition of function $E^\omega$ and let it to be the time parameter. In this paper, the function $E^\omega$ defined in \ref{eq: 11} is fixed as time changes. 
\end{Remark}

\subsection{Determinantal structure of the correlation functions}
The initial conditions for the Markov process are the distributions $P^{N,\omega}$. Proposition \ref{prop: 1} implies that the joint distribution of
\begin{multline*}
      x^{n,a}\left(t^{N}_0\right),\ldots x^{n,a}\left(t^{N}_{c(N)}\right), x^{n-1/2+a,-a}\left(t^{N-1}_0\right),\ldots\\ 
       \ldots, x^{n-1/2+a,-a}\left(t^{N-1}_{c(N-1)}\right),x^{n-3/2-a,a}\left(t^{N-2}_{0}\right),\ldots, x^{1,1/2}\left(t^2_{c(2)}\right),x^{1,-1/2}\left(t^1_0\right),\ldots,x^{1,-1/2}\left(t^1_{c(1)}\right) 
\end{multline*}   takes the form 
\begin{multline}\label{eq: 3}
   \text{const}\times \prod_{m=1}^{n}\Bigg[\text{det}\left[\phi_{+}^{-}\left(x_l^{m,-}\left(t^{2m-1}_{c(2m-1)}\right),x_k^{m-1,+}\left(t^{2m-2}_{0}\right)\right) \right]_{1\le k,l\le m}\\
   \times \prod_{b=1}^{c(2m-1)}\text{det}\left[\mathcal{T}_{t^{2m-1}_{b-1},t^{2m-1}_{b}}^{m,-}\left(x_l^{m,-}\left(t^{2m-1}_{b-1}\right),x_k^{m,-}\left(t^{2m-1}_{b}\right) \right)\right]_{1\le k,l\le m}\\
   \times \text{det}\left[\phi^{+}_{-}\left(x_l^{m,+}\left(t^{2m}_{c(2m)}\right),x_k^{m,-}\left(t^{2m-1}_{0}\right)\right) \right]_{1\le k,l\le m}\\
   \times \prod_{b=1}^{c(2m)}\text{det}\left[\mathcal{T}_{t^{2m}_{b-1},t^{2m}_{b}}^{m,+}\left(x_l^{m,+}\left(t^{2m}_{b-1}\right),x_k^{m,+}\left(t^{2m}_{b}\right)\right) \right]_{1\le k,l\le m}\Bigg]\\
   \times\text{det}\left[\psi_{n-l}^{n,a}\left(x_k^{n, a}\left(t^{N}_{0}\right) \right)\right]_{1\le k,l\le n},
\end{multline}
where $\mathcal{T}_{t_1,t_2}^{n,a}=I_a^{\varphi_{t_2-t_1}}$ and $x^{m-1,+}_{m}$ is set to be $-1$, we refer to this variable as $\text{virt}$.
If $a=-1/2$, then the last determinant of $\phi^{+}_{-}\left(x_l^{n,+}\left(t^{2n}_{c(2n)}\right),x_k^{n,-}\left(t^{2n-1}_{0}\right)\right)$ and the product of determinants of  $\mathcal{T}^{n,+}_{t^{2n}_{b-1},t^{2n}_b} $ do not occur.

With the above formula for the joint distribution, we apply an analogue of  Theorem 4.2 of \cite{borodin2008large} to compute the correlation kernel. Let $*$ denote convolution:
\begin{equation*}
    f*g(x,y)=\sum_{z\ge 0}f(x,z)g(z,y).
\end{equation*}

For any $n$, $a$ and any two time moments $ t_i^{2n-1/2+a}< t_j^{2n-1/2+a}$, define
\begin{equation*}
    \mathcal{T}^{n,a}_{t^{2n-1/2+a}_i,t_j^{2n-1/2+a}}=  \mathcal{T}^{n,a}_{t^{2n-1/2+a}_{i},t_{i+1}^{2n-1/2+a}}\ast\mathcal{T}^{n,a}_{t^{2n-1/2+a}_{i+1},t_{i+2}^{2n-1/2+a}}\ast\cdots\ast\mathcal{T}^{n,a}_{t^{2n-1/2+a}_{j-1},t_j^{2n-1/2+a}}
\end{equation*}
and
\begin{equation*}
     \mathcal{T}^{n,a}=\mathcal{T}^{n,a}_{t_{0}^{2n-1/2+a},t^{2n-1/2+a}_{c({2n-1/2+a})}}.
\end{equation*}

For any time moments $t^{2n_2-1/2+a_2}_{b_2}<t^{2n_1-1/2+a_1}_{b_1}$ with $n_1\le n_2$, we denote the convolution over all transitions between them by $\phi^{t^{2n_2-1/2+a_2}_{b_2},t^{2n_1-1/2+a_1}_{b_1}}$:

%\begin{multline*}
%    \phi^{t^{2n_2-1/2+a_2}_{b_2},t^{2n_1-1/2+a_1}_{b_1}}=\\
%     \left\{\begin{array}{cc}
% \mathcal{T}^{n_2,+}_{t^{2n_2}_{b_2},t^{2n_2}_{c(2n_2)}}\ast \phi_{n_2,-}^{n_2,+} \ast   \mathcal{T}^{n_2,-}\ast  \phi_{n_2-1,+}^{n_2,-} \ast   \mathcal{T}^{n_2-1,+}\ast \cdots\ast \phi_{n_1,+}^{n_1+1,-} \ast   \mathcal{T}^{n_1,+}_{t^{2n_1}_{0},t^{2n_1}_{b_1},}      &  a_1=a_2=1/2\\
%   \mathcal{T}^{n_2,+}_{t^{2n_2}_{b_2},t^{2n_2}_{c(2n_2)}}\ast \phi_{n_2,-}^{n_2,+} \ast   \mathcal{T}^{n_2,-}\ast  \phi_{n_2-1,+}^{n_2,-} \ast   \mathcal{T}^{n_2-1,+}\ast \cdots\ast \phi_{n_1,-}^{n_1,+} \ast   \mathcal{T}^{n_1,-}_{t^{2n_1-1}_{0},t^{2n_1-1}_{b_1}}       &  a_2=-a_1=1/2 \\
 %  \mathcal{T}^{n_2,-}_{t^{2n_2-1}_{b_2},t^{2n_2-1}_{c(2n_2-1)}}\ast  \phi_{n_2-1,+}^{n_2,-} \ast   \mathcal{T}^{n_2-1,+}\ast \cdots\ast \phi_{n_1,+}^{n_1+1,-} \ast   \mathcal{T}^{n_1,+}_{t^{2n_1}_{0},t^{2n_1}_{b_1},}       &  a_2=-a_1=-1/2\\
%  \mathcal{T}^{n_2,-}_{t^{2n_2-1}_{b_2},t^{2n_2-1}_{c(2n_2-1)}}\ast  \phi_{n_2-1,+}^{n_2,-} \ast   \mathcal{T}^{n_2-1,+}\ast \cdots\ast    \phi_{n_1,-}^{n_1,+} \ast   \mathcal{T}^{n_1,-}_{t^{2n_1-1}_{0},t^{2n_1-1}_{b_1}}    & a_1=a_2=-1/2 \ 
 %    \end{array}\right.
%\end{multline*}

\begin{multline*}
 \phi^{t^{2n_2-1/2+a_2}_{b_2},t^{2n_1-1/2+a_1}_{b_1}}=  \mathcal{T}^{n_2,a_2}_{t^{2n_2-1/2+a_2}_{b_2},t^{2n_2-1/2+a_2}_{c(2n_2-1/2+a_2)}}\ast \phi_{-a_2}^{a_2} \ast   \mathcal{T}^{n_2-1/2+a_2,-a_2}\ast   \cdots\\
   \cdots\ast \phi_{a_1}^{-a_1} \ast   \mathcal{T}^{n_1,a_1}_{t^{2n_1-1/2+a_1}_{0},t^{2n_1-1/2+a_1}_{b_1}}. 
\end{multline*}
For example, when $a_1=-a_2=1/2$,
\begin{equation*}
   \phi^{t^{2n_2-1/2+a_2}_{b_2},t^{2n_1-1/2+a_1}_{b_1}}=  \mathcal{T}^{n_2,-}_{t^{2n_2-1}_{b_2},t^{2n_2-1}_{c(2n_2-1)}}\ast  \phi_{+}^{-} \ast   \mathcal{T}^{n_2-1,+}\ast \cdots\ast \phi_{+}^{-} \ast   \mathcal{T}^{n_1,+}_{t^{2n_1}_{0},t^{2n_1}_{b_1}}.      
\end{equation*}
When $t^{2n_1-1/2+a_1}_{b_1}\le t^{2n_2-1/2+a_2}_{b_2}$, let
\begin{equation*}
    \phi^{t^{2n_2-1/2+a_2}_{b_2},t^{2n_1-1/2+a_1}_{b_1}}=0.
\end{equation*}

For $a=\pm \frac{1}{2}$, we define matrices $M^{n,a}=[M^{n,a}]_{k,l=1}^{n}$ with entries 
\begin{equation*}
   M_{k,l}^{n,a}= \psi^{n,a}_{n-l} \ast \mathcal{T}^{n,a} \ast\phi_{-a}^{a}\ast    \mathcal{T}^{n-1/2+a,-a}\ast \cdots\ast \mathcal{T}^{k,-}\ast\phi_{+}^{-}(\text{virt}).
\end{equation*}
%and
%\begin{equation*}
 %  M_{k,l}^{-1/2}= \psi^{N,-1/2}_{N-l} \ast \mathcal{T}^{N,-} \ast\phi_{N-1,+}^{N,-}\ast \cdots\ast \mathcal{T}^{k,-}\ast\phi_{k-1,+}^{k,-}(virt),
%\end{equation*}
where $\text{virt}$ is the virtual variable for $\phi_{+}^{-}$, which is equal to $-1$.

Last, for $N=2n-1/2+a$ and $l\le n$, define  
\begin{equation*}
    \psi_{k-l}^{ t^{2k-1/2+a_1}_b,   N}=\psi^{n,a}_{n-l}\ast \phi^{t^{N}_{0},t^{2k-1/2+a_1}_{b}}.
\end{equation*}
\begin{Lemma}\label{Lemma: 1}
If $M^{n,a}$ is upper triangular and invertible,  there exist functions $\Phi_{k-j}^{ t^{2k-1/2+a_1}_b,N}(s)$ such that 
\begin{enumerate}[{(1)}]
    \item $\left\{\Phi_{k-j}^{ t^{2k-1/2+a_1}_b,N}(s)\right\}_{j=1}^k$ is a  basis of the linear span of 
    \begin{equation*}
        \left\{\mathcal{T}_{t^{2k-1/2+a_1}_b,t^{2k-1/2+a_1}_{c\left(2k-1/2+a_1\right)}}^{k,a_1}\ast\cdots\ast\mathcal{T}_{t^{2j-1}_0,t^{2j-1}_{c\left(2j-1\right)}}^{j,-}\ast \phi_{+}^{-}(s,\text{virt})\right\}_{j=1}^k.
    \end{equation*}
    \item For $0\le j_1,j_2 \le k-1$, 
    \begin{equation*}
        \sum_{s\ge 0} \Phi_{j_1}^{t^{2k-1/2+a_1}_b,N}(s)\psi_{j_2}^{t^{2k-1/2+a_1}_b,N}(s)=\delta_{j_1,j_2}.
    \end{equation*}
\end{enumerate}
Then the correlation kernel for two comparable pairs $(n_1,a_1,t_1)$ and $ (n_2,a_2,t_2)$ with $2n_i-1/2+a_i\le N$ is given by 
\begin{equation*}
    K(n_1,a_1,t_1,s_1;n_2,a_2,t_2,s_2)=-\phi^{t^{2n_2-1/2+a_2}_{b_2},t^{2n_1-1/2+a_1}_{b_1}}(s_1,s_2)+\sum_{k=1}^{n_2}\psi_{n_1-k}^{t^{2n_1-1/2+a_1}_{b_1},N}(s_1)\Phi_{n_2-k}^{ t^{2n_2-1/2+a_2}_{b_2},N}(s_2),
\end{equation*}
where $t^{2n_i-1/2+a_i}_{b_i}=t_i$, $i=1,2$. 
\end{Lemma}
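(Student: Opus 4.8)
The plan is to recognize Lemma \ref{Lemma: 1} as an instance of the Eynard--Mehta-type machinery (Theorem 4.2 of \cite{borodin2008large}, as cited), adapted to the present setting where the levels alternate between the two families $\mathbb{J}_{n,\pm}$ and where some levels carry ``virtual'' variables only. First I would recall the general structure: the joint measure \eqref{eq: 3} is a product of determinants of the form $\det[\phi_i(x_k^{(i)},x_l^{(i+1)})]$ linking consecutive levels, together with a boundary factor $\det[\psi_{n-l}^{n,a}(x_k^{n,a})]$ at the top level. For such biorthogonal-ensemble measures, the general theorem states that the point process is determinantal with a kernel built from (a) the backward convolution $\phi^{t_{b_2},t_{b_1}}$ between the two levels of interest, subtracted, plus (b) a finite bilinear sum $\sum_k \psi^{\cdot}_{n_1-k}(s_1)\Phi^{\cdot}_{n_2-k}(s_2)$, where the $\psi$'s are the forward-propagated boundary functions and the $\Phi$'s are a dual basis chosen so that the Gram-type matrix pairing them is the identity. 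So the proof is essentially: (i) put \eqref{eq: 3} into the exact hypothesis format of the general theorem; (ii) verify that the matrix $M^{n,a}$ defined in the excerpt is precisely the ``$M$-matrix'' whose invertibility is required, and that the normalization constant is $1/\det M^{n,a}$; (iii) invoke the theorem to read off the kernel; (iv) check that the functions $\Phi$ produced by the theorem (which a priori are obtained by inverting $M^{n,a}$) satisfy properties (1) and (2) as stated, and conversely that any family satisfying (1),(2) gives the same kernel.

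The key steps in order: I would first set up notation matching the convolution $*$ and identify each transition block in \eqref{eq: 3} — the $\phi_+^-$, $\phi_-^+$ blocks and the $\mathcal{T}^{m,\pm}$ time-propagation blocks — as the ``$\phi$'' kernels of the general framework, and the top factor $\psi_{n-l}^{n,a}$ as the boundary row. Crucially I must handle the fact that at levels $2m-1$ and $2m$ there are $m$ particles each but at level $2m$ the $\phi_-^+$ transition into level $2m+1$ introduces the virtual variable $\text{virt}=-1$ (the ``$x_m^{m-1,+}=-1$'' convention); this is exactly the reason the authors introduced the \emph{generalized} L-ensemble earlier in Section 3, so I would cite that generalization to legitimize applying the Eynard--Mehta theorem despite the virtual variables. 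Next, I would compute the matrix that the general theorem calls for: pairing the boundary functions $\psi_{n-l}^{n,a}$ against the full chain of convolutions down to the virtual variable, which is exactly $M^{n,a}_{k,l}$. Assuming $M^{n,a}$ upper triangular and invertible (the hypothesis), the normalization is $\det M^{n,a}$ and the kernel formula of the theorem specializes to the claimed expression. Finally, I would verify the biorthogonality characterization: the theorem's canonical $\Phi$'s are $\sum_l (M^{n,a})^{-1}_{\cdot l}\psi_{n-l}$-type combinations, and one checks directly that (a) they span the same space as the listed propagated $\phi_+^-$ functions (since $M^{n,a}$ is an invertible change of basis on that finite-dimensional span), giving property (1), and (b) the pairing $\sum_s \Phi_{j_1}\psi_{j_2}=\delta_{j_1 j_2}$ is precisely $(M^{n,a})^{-1}M^{n,a}=I$ rewritten, giving property (2); a short uniqueness argument then shows any family with (1),(2) yields the same kernel, because the kernel depends on the $\Phi$'s only through the projection they define.

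The main obstacle I expect is the careful bookkeeping of indices and of the virtual variables: one must make sure the ranges of summation, the shift $x_k^{n,a}=\lambda_k^{n,a}+n-k$, and the placement of $\text{virt}=-1$ are consistent across the alternating $\pm$ levels so that the counting of particles per level (which differs by the virtual slot) matches the hypotheses of the generalized L-ensemble exactly; an off-by-one here would break the determinantal identity. A secondary subtlety is justifying the convergence/interchange of the infinite convolutions and the contour manipulations implicit in writing $\phi^{t_{b_2},t_{b_1}}$ as a single kernel, but since all the $\mathcal{T}$'s and $\phi$'s here are the explicit Jacobi-type kernels from \cite{borodin2011random} with the Gaussian-type weight $\varphi_t(x)=e^{t(x-1)}$, these manipulations are the same as in that reference and in \cite{2013}, so I would simply point to those computations rather than redo them. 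Modulo this bookkeeping, the lemma is a direct transcription of the Eynard--Mehta theorem into the notation developed in the excerpt, with $M^{n,a}$ playing the role of the Gram matrix.
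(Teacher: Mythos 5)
Your proposal is correct and follows essentially the same route as the paper: the paper likewise reduces the statement to the Eynard--Mehta/BFPS machinery (Theorem 4.2 of \cite{borodin2008large}, Lemma 3.4 of \cite{Borodin_2007}) by exhibiting the measure \eqref{eq: 3} as a symmetric minor of an explicit block matrix $L^{n,a}$ (the generalized L-ensemble, with the $\mathbb{T}^{m,\pm}$, $\phi^{\pm}_{\mp}$ and $\Psi^{n,a}$ blocks and the virtual variable handled exactly as you describe), with $M^{n,a}$ playing the role of the Gram matrix and the $\Phi$'s its biorthogonal dual. Your extra remarks on normalization by $\det M^{n,a}$ and on uniqueness of the kernel given properties (1)--(2) are consistent with, and implicit in, the cited general argument.
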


\begin{proof}
The proof is similar to the proof of Theorem 4.2 in \cite{borodin2008large} and Lemma 3.4 of \cite{Borodin_2007}.

First we define a matrix $L^{n,a}$ such that the measure in \eqref{eq: 3} is proportional to a suitable symmetric minor of $L^{n,a}$.
\begin{equation*}
    L^{n,a}=\left(\begin{array}{cccccccccccc}
       0  &  E_1 & 0       & 0    & 0  & E_2  &\cdots &E_{n} & 0 & 0 & 0 \\
       0  &  0   &-\mathbb{T}^{1,-} &0     & 0   & 0  &\cdots& 0 & 0 & 0 & 0\\
        0  &  0   &0 &-F_{1,-}^{1,+}     & 0   & 0  &\cdots&  0 & 0 & 0 & 0\\
         0  &  0    &0     & 0 &-\mathbb{T}^{1,+}  & 0  &\cdots&  0 & 0 & 0 & 0\\
          0  &  0   &0 &0     & 0   & -F_{1,+}^{2,-}  &\cdots&  0 & 0 & 0 & 0\\
        \vdots & \vdots &      \vdots & \vdots &         \vdots & \vdots &\ddots    & \vdots & \vdots &     \vdots & \vdots \\  
          0  &  0   &0 &0     & 0   & 0 &\cdots& -F_{n-1,+}^{n,-} & 0 & 0 & 0\\
            0  &  0    &0     & 0   & 0 & 0 &\cdots & 0&-\mathbb{T}^{n,-} & 0 & 0\\
        0  &  0   &0    & 0   & 0 & 0 &\cdots & 0 & 0& -F_{n,-}^{n,+}  & 0\\
         0  &  0    &0     & 0   & 0 & 0 & \cdots & 0 & 0 & 0&-\mathbb{T}^{n,+}\\
       \Psi^{n,a} &  &  0   &0 &0     & 0   &\cdots  &0&  0 & 0 & 0 
    \end{array}\right).
\end{equation*}
If $a=-1/2$, the last two columns do not occur.
The matrix blocks in $L^{n,a}$ have the following entries:
\begin{equation*}
 \left[\Psi^{n,a}\right]_{k,l}=\psi_{n-l}^{n,a}\left(x_k^{n,a}\left(t^{N}_{0}\right)\right),\quad {1\le k,l\le n}.    
\end{equation*}
\begin{equation*}
 \left[E_{m}\right]_{kl}=\left\{\begin{array}{cc}
   \phi_{+}^{-}\left(x_l^{m,-}\left(t^{2m-1}_{c(2m-1)}\right),x_m^{m-1,+}\left(t^{2m-2}_{0}\right)\right),      & k=m,\ 1\le l\le m, \\
     0,    & 1\le k\le n,\ k\neq m,\ 1\le l\le m,
    \end{array}\right.
    \end{equation*}

\begin{equation*}
   \left[ F_{m,-}^{m,+}\right]_{k,l}=\phi^{+}_{-}\left(x_l^{m,+}\left(t^{2m}_{c(2m)}\right),x_k^{m,-}\left(t^{2m-1}_{0}\right)\right),\quad  {1\le k,l\le m},
\end{equation*}
\begin{equation*}
   \left[ F_{m-1,+}^{m,-}\right]_{k,l}=\phi_{+}^{-}\left(x_l^{m,-}\left(t^{2m-1}_{c(2m-1)}\right),x_k^{m-1,+}\left(t^{2m-2}_{0}\right)\right),\quad {1\le k\le m-1,\ 1\le l\le m},
\end{equation*}
and $\mathbb{T}^{m,\pm}$ is the matrix made of blocks
\begin{equation*}
    \mathbb{T}^{m,\pm}=\left(\begin{array}{cccc}
     \mathbb{T}^{m,\pm}_1    & 0 & 0 \\
        0 & \ddots &0\\
        0&0& \mathbb{T}^{m,\pm}_{c(2m-1/2\pm1/2)}
    \end{array}\right),
\end{equation*}
where  
\begin{multline*}
  \left[\mathbb{T}^{m,\pm}_b\right]_{k,l}=\mathcal{T}_{t^{2m-1/2\pm 1/2}_{b-1},t^{2m-1/2\pm 1/2}_b}^{m,\pm} \left(x_l^{m,\pm} \left(t^{2m-1/2\pm1/2}_{b-1}\right),x_k^{m,\pm} \left(t^{2m-1/2\pm1/2}_{b}\right)\right),\\
  1\le k,l\le m,\ 1\le b\le c(2m-1/2\pm 1/2).   
\end{multline*}

The rest of the proof is along the same lines as that of Lemma 3.4 in \cite{Borodin_2007}.
\end{proof}

In the following sections, we compute each term that appears in Lemma \ref{Lemma: 1}.  We first calculate the function $  \phi^{t^{2n_2-1/2+a_2}_{b_2},t^{2n_1-1/2+a_1}_{b_1}}$ in section 3.2, then we calculate  matrix $M^{n,a}$ and the function $\psi_{n-k}^{ t^{2k-1/2+a_1}_b,N}$ in section 3.3. In section 3.4, we compute $ \Phi_{k-j}^{ t^{2k-1/2+a_1}_b,N}(s)$ and $\displaystyle {\sum_{k=1}^{n_2}\psi_{n_1-k}^{t^{2n_1-1/2+a_1}_{b_1},N}(s_1)\Phi_{n_2-k}^{ t^{2n_2-1/2+a_2}_{b_2},N}(s_2)}$. Last, in section 3.5, we combine all the functions obtained from section 3.3--3.4 to  get the correlation kernel $K$ in Theorem \ref{thm: 1}.

\subsection{Calculating $  \phi^{t^{2n_2-1/2+a_2}_{b_2},t^{2n_1-1/2+a_1}_{b_1}}$}
Recall that $\varphi_{t}(x)=e^{t(x-1)}$,  for ease of notation, let $\varphi_{n,+}(x)=\varphi_{t^{2n}_{c(2n)}-t^{2n}_{0}}(x)$, $\varphi_{n,-}(x)=\varphi_{t^{2n-1}_{c(2n-1)}-t^{2n-1}_{0}}(x)$ and $\varphi_{n,\pm}(x)=\varphi_{n,+}(x)\varphi_{n,-}(x)=\varphi_{t^{2n}_{c(2n)}-t^{2n-1}_{0}}(x)$.
Let's recall some useful identities from Lemma 2.2--2.7 in \cite{borodin2011random}.
\begin{Lemma}\label{Lemma: 2}\cite{borodin2011random} Let $T(x)\in C^1[-1,1]$, the following identities hold:
\begin{enumerate}[{(1)}]
    \item \begin{equation*}
        \sum_{r=0}^s W^{\left(-\frac{1}{2},-\frac{1}{2}\right)}(r)\mathcal{J}_r^{\left(-\frac{1}{2},-\frac{1}{2}\right)}(x)=\mathcal{J}_s^{\left(\frac{1}{2},-\frac{1}{2}\right)}(x),
    \end{equation*}
    \item\begin{equation*}
        \frac{1}{\pi}\int_{-1}^1 \mathcal{J}_{r}^{\left(\frac{1}{2},-\frac{1}{2}\right)}(x)(1-x)^{-1/2} (1+x)^{-1/2}  dx=1,
    \end{equation*}
    \item 
    \begin{equation*}
        \sum_{r=0}^{\infty} \frac{W^{\left(-\frac{1}{2},-\frac{1}{2}\right)}(r)}{\pi}\int_{-1}^1 \mathcal{J}_{r}^{\left(-\frac{1}{2},-\frac{1}{2}\right)}(x)T(x)(1-x)^{-1/2} (1+x)^{-1/2}  dx=T(1),
    \end{equation*}
    \item
    \begin{multline*}
          \sum_{r=s+1}^{\infty} \frac{W^{\left(-\frac{1}{2},-\frac{1}{2}\right)}(r)}{\pi}\int_{-1}^1 \mathcal{J}_{r}^{\left(-\frac{1}{2},-\frac{1}{2}\right)}(x)T(x)(1-x)^{-1/2} (1+x)^{-1/2}  dx\\
          =\frac{1}{\pi}\int_{-1}^1 \mathcal{J}_{s}^{\left(\frac{1}{2},-\frac{1}{2}\right)}(x)\left(T(1)-T(x)\right)(1-x)^{-1/2} (1+x)^{-1/2}  dx,
    \end{multline*}
    \item
    \begin{multline*}
          \sum_{r=s}^{\infty} \frac{W^{\left(\frac{1}{2},-\frac{1}{2}\right)}(r)}{\pi}\int_{-1}^1 \mathcal{J}_{r}^{\left(\frac{1}{2},-\frac{1}{2}\right)}(x)T(x)(1-x)^{1/2} (1+x)^{-1/2}  dx\\
          =\frac{1}{\pi}\int_{-1}^1 \mathcal{J}_{s}^{\left(-\frac{1}{2},-\frac{1}{2}\right)}(x)T(x)(1-x)^{-1/2} (1+x)^{-1/2}  dx,
    \end{multline*}
     \item for $a=\pm \frac{1}{2}$, $-1\le \zeta\le 1$,
    \begin{equation*}
        \sum_{k=0}^{\infty} \frac{W^{\left(a,-\frac{1}{2}\right)}(k)}{\pi}\int_{-1}^1 \mathcal{J}_{k}^{\left(a,-\frac{1}{2}\right)}(x)\mathcal{J}_{k}^{\left(a,-\frac{1}{2}\right)}(\zeta)T(x)(1-x)^{a} (1+x)^{-1/2}  dx=T(\zeta).
    \end{equation*}
\end{enumerate}
\end{Lemma}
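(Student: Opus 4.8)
Since these six identities are quoted verbatim from \cite{borodin2011random} (Lemmas 2.2--2.7), one may simply refer to that source; but for a self-contained argument the plan is to reduce everything, via the substitution $x=\cos\theta$ with $\theta\in[0,\pi]$, to elementary facts about Fourier cosine and sine series. First I would set up the dictionary between the normalized Jacobi polynomials and Chebyshev-type trigonometric polynomials. The constant $c_k$ is tailored precisely so that
\[
\mathcal{J}_k^{(-1/2,-1/2)}(\cos\theta)=\cos(k\theta),\qquad \mathcal{J}_k^{(1/2,-1/2)}(\cos\theta)=\frac{\sin\left((k+\tfrac12)\theta\right)}{\sin(\theta/2)};
\]
this is the standard identification of Jacobi polynomials with parameters in $\{\pm\tfrac12\}$ as rescaled Chebyshev polynomials (see e.g.\ \cite{SG-Orthogonal}), and the normalization is confirmed by evaluating at $x=1$, where the two right-hand sides give $1$ and $2k+1$, matching $P_k^{(-1/2,-1/2)}(1)/c_k$ and $P_k^{(1/2,-1/2)}(1)/c_k$. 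Along the way I would record the weight translations $(1-x)^{-1/2}(1+x)^{-1/2}\,dx=-d\theta$ and $\mathcal{J}_r^{(1/2,-1/2)}(x)\,(1-x)^{1/2}(1+x)^{-1/2}\,dx=-2\sin\left((r+\tfrac12)\theta\right)\sin(\theta/2)\,d\theta$, and the observation that $W^{(-1/2,-1/2)}(r)$ is exactly the factor turning $\tfrac1\pi\int_0^\pi\cos(r\theta)f(\cos\theta)\,d\theta$ into the $r$-th Fourier cosine coefficient of $\theta\mapsto f(\cos\theta)$, while $W^{(1/2,-1/2)}\equiv1$.

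Given this dictionary, identities (1)--(3) and (6) are immediate. Identity (1) is the closed form of the Dirichlet kernel $1+2\sum_{r=1}^s\cos(r\theta)=\sin\left((s+\tfrac12)\theta\right)/\sin(\theta/2)$. Identity (2) follows by integrating that kernel over $[0,\pi]$, the constant contributing $\pi$ and every higher cosine contributing $0$. Identity (3) is the value at $\theta=0$ of the Fourier cosine series of the $C^1$ function $\theta\mapsto T(\cos\theta)$, the $C^1$ hypothesis supplying absolute convergence and hence legitimacy of the term-by-term summation. Identity (6) is the completeness (reproducing-kernel) identity for the orthogonal bases $\{\cos(k\theta)\}_{k\ge0}$ of $L^2[0,\pi]$ when $a=-\tfrac12$ and $\{\sin((k+\tfrac12)\theta)\}_{k\ge0}$ when $a=\tfrac12$ (in the latter case after expanding $2\sin(\theta/2)\,T(\cos\theta)$ in the sine basis), evaluated pointwise at $\theta=\arccos\zeta$, with $C^1$ regularity again furnishing the pointwise convergence.

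For identities (4) and (5) I would argue by differencing, respectively telescoping, at the level of \emph{finite} partial sums, so that no term-by-term interchange of sum and integral is needed. For (4): writing $a_r$ for the Fourier cosine coefficients, identity (1) gives $\sum_{r=0}^R a_r=\tfrac1\pi\int_0^\pi T(\cos\theta)\,\sin((R+\tfrac12)\theta)/\sin(\theta/2)\,d\theta\to T(1)$ (this is identity (3)), and identity (2) gives $T(1)=\tfrac1\pi\int_0^\pi T(1)\,\sin((s+\tfrac12)\theta)/\sin(\theta/2)\,d\theta$; subtracting the partial sum $\sum_{r=0}^s a_r$ from $T(1)$ then yields $\sum_{r>s}a_r=\tfrac1\pi\int_0^\pi (T(1)-T(\cos\theta))\,\sin((s+\tfrac12)\theta)/\sin(\theta/2)\,d\theta$, which is the right-hand side of (4). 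For (5): using $W^{(1/2,-1/2)}\equiv1$ and the weight translation above, the $r$-th summand equals $\tfrac1\pi\int_0^\pi 2\sin((r+\tfrac12)\theta)\sin(\tfrac\theta2)\,T(\cos\theta)\,d\theta=\tfrac1\pi\int_0^\pi(\cos(r\theta)-\cos((r+1)\theta))\,T(\cos\theta)\,d\theta$ by a product-to-sum formula, so $\sum_{r=s}^{R}$ telescopes to $\tfrac1\pi\int_0^\pi(\cos(s\theta)-\cos((R+1)\theta))\,T(\cos\theta)\,d\theta$; letting $R\to\infty$ and killing the second term by Riemann--Lebesgue leaves $\tfrac1\pi\int_0^\pi\cos(s\theta)T(\cos\theta)\,d\theta$, which is the right-hand side of (5).

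I do not expect a genuine obstacle here: the only delicate points are the bookkeeping in the first paragraph --- pinning down the $c_k$-normalization, the two weight densities $(1-x)^{\pm1/2}(1+x)^{-1/2}$, and the precise role of $W^{(a,b)}$ so that the trigonometric identities match up cleanly --- together with the three analytic inputs (absolute convergence of a Fourier cosine series, its pointwise value at a boundary point, and the Riemann--Lebesgue lemma), all of which are guaranteed by the hypothesis $T\in C^1[-1,1]$.
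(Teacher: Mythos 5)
Your proposal is correct in substance, but note that the present paper does not actually prove this lemma: it is recalled verbatim, with a citation, from Lemmas 2.2--2.7 of \cite{borodin2011random}, and the proofs there proceed by exactly the reduction you describe. So what you have written is essentially a self-contained reconstruction of the cited argument rather than an alternative to anything proved in this paper. Your dictionary is the right one: with $x=\cos\theta$ one has $\mathcal{J}_k^{(-1/2,-1/2)}(\cos\theta)=\cos(k\theta)$ and $\mathcal{J}_k^{(1/2,-1/2)}(\cos\theta)=\sin\bigl((k+\tfrac12)\theta\bigr)/\sin(\theta/2)$ (the normalization by $c_k$ is exactly what makes this hold, as your evaluation at $x=1$ confirms), the weight $(1-x)^{-1/2}(1+x)^{-1/2}\,dx$ becomes $d\theta$, and $W^{(-1/2,-1/2)}$ is the $1$-vs-$2$ factor of the cosine series while $W^{(1/2,-1/2)}\equiv 1$. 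With this, (1) is the Dirichlet kernel, (2) its integral, (3) pointwise convergence of the cosine series of the $C^1$ function $\theta\mapsto T(\cos\theta)$ at $\theta=0$ (absolute convergence indeed holds, e.g.\ by Bernstein's theorem, though Dini would already suffice), and your treatment of (4) and (5) by differencing, respectively telescoping, at the level of finite partial sums is a clean way to avoid any interchange of sum and integral; the product-to-sum step $2\sin((r+\tfrac12)\theta)\sin(\tfrac\theta2)=\cos(r\theta)-\cos((r+1)\theta)$ and Riemann--Lebesgue finish (5) exactly as you say.

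The one place your sketch does not literally close is identity (6) for $a=\tfrac12$ at the endpoint $\zeta=1$, which is allowed since $-1\le\zeta\le 1$: there $\theta_0=\arccos\zeta=0$, so ``expand $2\sin(\theta/2)T(\cos\theta)$ in the basis $\{\sin((k+\tfrac12)\theta)\}$, evaluate at $\theta_0$, divide by $\sin(\theta_0/2)$'' divides by zero, and since $\mathcal{J}_k^{(1/2,-1/2)}(1)=2k+1$ the series cannot be handled by crude coefficient bounds either. A short supplementary computation repairs it: the partial sums of the kernel satisfy $\sum_{k=0}^{n}(2k+1)\cdot 2\sin\bigl((k+\tfrac12)\theta\bigr)\sin(\tfrac\theta2)=\cot(\tfrac\theta2)\sin((n+1)\theta)-2(n+1)\cos((n+1)\theta)$; integrating against $T(\cos\theta)$, the first term tends to $T(1)$ (split off the constant $T(1)$, use $\int_0^\pi\cot(\tfrac\theta2)\sin((n+1)\theta)\,d\theta=\pi$, and apply Riemann--Lebesgue to the bounded function $(T(\cos\theta)-T(1))\cot(\tfrac\theta2)$), while the second term tends to $0$ after one integration by parts and Riemann--Lebesgue, using $T\in C^1$. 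With that endpoint case supplied (the interior $\zeta$ and the $a=-\tfrac12$ case are as you describe), the proof is complete and matches the spirit of the source.
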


We start by computing some basic convolutions that will be useful later.
\begin{Proposition}\label{Prop: 1}

\begin{enumerate}[{(1)}]
    \item \begin{equation*}
    \mathcal{T}^{n,a}_{t^{2n-1/2+a}_i,t_j^{2n-1/2+a}} =I_a^{\varphi_{t^{2n-1/2+a}_j-t_i^{2n-1/2+a}}}.  
\end{equation*}
\item\begin{equation*}
  \mathcal{T}^{n,+} \ast\phi_{-}^{+} (m,l)= \frac{W^{\left(-\frac{1}{2},-\frac{1}{2}\right)}(l)}{\pi}\int_{-1}^1 \mathcal{J}_{l}^{\left(-\frac{1}{2},-\frac{1}{2}\right)}(x)\mathcal{J}_{m}^{\left(\frac{1}{2},-\frac{1}{2}\right)}(x)\varphi_{n,+}(x)(1-x)^{-1/2} (1+x)^{-1/2}  dx.
\end{equation*}
\item\begin{multline*}
    \mathcal{T}^{n,-} \ast\phi_{+}^{-}(m,l)=\\
  \left\{\begin{array}{cc} 
   \begin{split}
        \\ \frac{W^{\left(\frac{1}{2},-\frac{1}{2}\right)}(l)}{\pi}\int_{-1}^1 \mathcal{J}_{l}^{\left(\frac{1}{2},-\frac{1}{2}\right)}(x)\left(\mathcal{J}_{m}^{\left(-\frac{1}{2},-\frac{1}{2}\right)}(1)\varphi_{n,-}(1)-\mathcal{J}_{m}^{\left(-\frac{1}{2},-\frac{1}{2}\right)}(x)\varphi_{n,-}(x)\right)\times\\
    (1-x)^{-1/2} (1+x)^{-1/2}  dx,      \end{split}    &  m,l \ge 0,\\
   \mathcal{J}_{m}^{\left(-\frac{1}{2},-\frac{1}{2}\right)}(1)\varphi_{n,-}(1) ,   &  m\ge 0, \ l=\text{virt}.
  \end{array}\right.
\end{multline*}
\item\begin{multline*}
     \mathcal{T}^{n,+} \ast\phi_{-}^{+}\ast \mathcal{T}^{n,-} \ast\phi_{+}^{-}(m,l)=\\
  \left\{    \begin{array}{lc}
  \begin{split}
    -
     \frac{1}{\pi}\int_{-1}^1\mathcal{J}_{m}^{\left(\frac{1}{2},-\frac{1}{2}\right)}(y)\mathcal{J}_{l}^{\left(\frac{1}{2},-\frac{1}{2}\right)}(y)\varphi_{n,\pm}(y)(1-y)^{-1/2}(1+y)^{-1/2}  dy\\  +\mathcal{J}_{m}^{\left(\frac{1}{2},-\frac{1}{2}\right)}(1)\varphi_{n,\pm}(1),
  \end{split} &  m\ge 0,\ l\ge 0 ,  \\
  \mathcal{J}_{m}^{\left(\frac{1}{2},-\frac{1}{2}\right)}(1)\varphi_{n,\pm}(1), &  m\ge 0,\ l=\text{virt}.\\
    \end{array}\right.   
\end{multline*}
\end{enumerate}

\end{Proposition}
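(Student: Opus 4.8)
The plan is to establish the four identities in sequence, since each one feeds into the next, and in every case the mechanism is the same: unfold a convolution $f \ast g(m,l) = \sum_{z \ge 0} f(m,z) g(z,l)$, recognize the resulting sum as one of the summation identities in Lemma~\ref{Lemma: 2}, and collapse it. For part~(1), I would simply observe that $\mathcal{T}^{n,a}_{t_i, t_j}$ was \emph{defined} as the iterated convolution $\mathcal{T}^{n,a}_{t_i, t_{i+1}} \ast \cdots \ast \mathcal{T}^{n,a}_{t_{j-1}, t_j} = I_a^{\varphi_{t_{i+1}-t_i}} \ast \cdots \ast I_a^{\varphi_{t_j - t_{j-1}}}$, so the content is just the semigroup property $I_a^{\varphi_s} \ast I_a^{\varphi_t} = I_a^{\varphi_{s+t}}$; this follows from the orthogonality relation Lemma~\ref{Lemma: 2}(6) applied with $T = \varphi_t$, after writing out the kernel $I_a^{\varphi_s}$ as an integral of two Jacobi polynomials against $\varphi_s(x)(1-x)^a(1+x)^{-1/2}$ and summing the shared index $k$ over all of $\mathbb{Z}_{\ge 0}$. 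The fact that $\varphi_{n,+}, \varphi_{n,-}, \varphi_{n,\pm}$ are products of such $\varphi$'s means I can freely lump the relevant time increments together.

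For part~(2) I would expand $\mathcal{T}^{n,+} \ast \phi^+_-(m,l) = \sum_{z \ge 0} I_{+1/2}^{\varphi_{n,+}}(m,z)\phi^+_-(z,l)$, substitute the integral form of $I_{+1/2}^{\varphi_{n,+}}$, and interchange sum and integral so that the $z$-sum acts only on $\mathcal{J}_z^{(1/2,-1/2)}(x)$ weighted by $\phi^+_-(z,l)$. Reading off the definition of $\phi^+_-$ (which is $1$ at $z = l = 0$, $2$ for $z \ge l > 0$, and $0$ for $z < l$), the $z$-sum becomes $\sum_{z \ge l} W^{(-1/2,-1/2)}(z)\mathcal{J}_z^{(1/2,-1/2)}(x)$ up to the boundary bookkeeping at $z=0$; I expect this telescoping sum of Jacobi polynomials to be handled by a partial-sum version of Lemma~\ref{Lemma: 2}(1)–(2), converting the $(1/2,-1/2)$-index object into a $(-1/2,-1/2)$-index object and producing exactly the stated right-hand side. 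The $l = \text{virt}$ case is not needed here but part~(3) has one: there $\phi^+_-(m, \text{virt})$ with the convention $\text{virt} = -1$ means the sum over $z$ is unconstrained, so $\mathcal{T}^{n,-}\ast\phi^-_+(m,\text{virt})$ reduces via Lemma~\ref{Lemma: 2}(3) (the "$T \mapsto T(1)$" collapse) to $\mathcal{J}_m^{(-1/2,-1/2)}(1)\varphi_{n,-}(1)$, while for $l \ge 0$ the truncated tail sum is exactly the content of Lemma~\ref{Lemma: 2}(4), giving the difference $T(1) - T(x)$ structure with $T(x) = \mathcal{J}_m^{(-1/2,-1/2)}(x)\varphi_{n,-}(x)$.

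Part~(4) is then a bootstrap: compose the output of part~(2) with the output of part~(3). Concretely, $\mathcal{T}^{n,+}\ast\phi^+_-\ast\mathcal{T}^{n,-}\ast\phi^-_+(m,l) = \sum_{z\ge 0}\big(\mathcal{T}^{n,+}\ast\phi^+_-(m,z)\big)\big(\mathcal{T}^{n,-}\ast\phi^-_+(z,l)\big)$ — wait, one must be careful about which side the virtual-variable/$\phi$ pieces attach to and the order of the two $\phi$'s; I would track the indices exactly as in the definition of $\phi^{t,t'}$, so that the inner summation variable is the one shared between a $(-1/2,-1/2)$-type kernel on one side and a $(1/2,-1/2)$-type kernel on the other. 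After substituting the two integral expressions and swapping the $z$-sum inside, the key step is Lemma~\ref{Lemma: 2}(5), which trades a sum of $\mathcal{J}^{(1/2,-1/2)}$'s weighted by $(1-x)^{1/2}$ against a single $\mathcal{J}^{(-1/2,-1/2)}$ weighted by $(1-x)^{-1/2}$; combined with the completeness relation Lemma~\ref{Lemma: 2}(6) this should collapse the double integral into the single integral of $\mathcal{J}_m^{(1/2,-1/2)}(y)\mathcal{J}_l^{(1/2,-1/2)}(y)\varphi_{n,\pm}(y)$ shown, with the extra $+\mathcal{J}_m^{(1/2,-1/2)}(1)\varphi_{n,\pm}(1)$ term emerging from the $T(1)$ part carried over from part~(3). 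The main obstacle I anticipate is purely bookkeeping rather than conceptual: getting the truncation endpoints of the Jacobi sums right (the $W$-weight is $2$ for $r > 0$ but $1$ at $r = 0$ when $a = -1/2$, and the $\phi$-kernels have their own boundary cases at argument $0$), and making sure the $\text{virt} = -1$ convention is applied consistently so that the "$l = \text{virt}$" rows of parts~(3) and~(4) come out as the clean $T(1)$-evaluations claimed. Once the endpoints are pinned down, each identity is a one-line application of the appropriate clause of Lemma~\ref{Lemma: 2}.
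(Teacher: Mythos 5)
Your proposal is correct and is essentially the paper's own argument: parts (1)--(3) are read off from Lemma \ref{Lemma: 2} after unfolding the convolutions, and part (4) is obtained, exactly as in the paper, by composing the outputs of (2) and (3), moving the shared sum inside the integrals, and collapsing it, with the $T(1)$ piece inherited from (3) producing the boundary term $\mathcal{J}_{m}^{\left(\frac{1}{2},-\frac{1}{2}\right)}(1)\varphi_{n,\pm}(1)$. The only inaccuracies are in which clauses do the work: part (2) is precisely Lemma \ref{Lemma: 2}(5) (the weight $W^{\left(-\frac{1}{2},-\frac{1}{2}\right)}(l)$ comes from the second argument $l$ of $\phi_{-}^{+}$, not from the summation index), and the collapse in (4) uses clauses (2), (3) and (6) rather than (5) — neither slip affects the validity of the route.
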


\begin{proof}
Results (1)--(3) follow directly from Lemma \ref{Lemma: 2}. We only show the computation for  (4). By definition, when $m,l\ge 0$,
\begin{multline*}
  \mathcal{T}^{n,+} \ast\phi_{-}^{+}\ast \mathcal{T}^{n,-} \ast\phi_{+}^{-}(m,l)=  \\ \frac{1}{\pi^2}\sum_{k=0}^{\infty}\left(W^{\left(-\frac{1}{2},-\frac{1}{2}\right)}(k) \int_{-1}^1 \mathcal{J}_{k}^{\left(-\frac{1}{2},-\frac{1}{2}\right)}(x)\mathcal{J}_{m}^{\left(\frac{1}{2},-\frac{1}{2}\right)}(x)\varphi_{n,+}(x)(1-x)^{-1/2} (1+x)^{-1/2}  dx\times\right.\\
 \left.\int_{-1}^1 \mathcal{J}_{l}^{\left(\frac{1}{2},-\frac{1}{2}\right)}(y)\left(\mathcal{J}_{k}^{\left(-\frac{1}{2},-\frac{1}{2}\right)}(1)\varphi_{n,-}(1)-\mathcal{J}_{k}^{\left(-\frac{1}{2},-\frac{1}{2}\right)}(y)\varphi_{n,-}(y)\right)(1-y)^{-1/2} (1+x)^{-1/2}  dy\right).   
\end{multline*}
According to Lemma \ref{Lemma: 2} (2),
\begin{equation*}
  \frac{1}{\pi}\int_{-1}^1 \mathcal{J}_{l}^{\left(\frac{1}{2},-\frac{1}{2}\right)}(y)\mathcal{J}_{k}^{\left(-\frac{1}{2},-\frac{1}{2}\right)}(1)\varphi_{n,-}(1)(1-y)^{-1/2} (1+y)^{-1/2}  dy =\mathcal{J}_{k}^{\left(-\frac{1}{2},-\frac{1}{2}\right)}(1)\varphi_{n,-}(1), 
\end{equation*}
thus we have

\begin{multline}\label{eq: 1}
     \mathcal{T}^{n,+} \ast\phi_{-}^{+}\ast \mathcal{T}^{n,-} \ast\phi_{+}^{-}(m,l)= \\ -\frac{1}{\pi^2}\sum_{k=0}^{\infty}W^{\left(-\frac{1}{2},-\frac{1}{2}\right)}(k)\left(\int_{-1}^1 \mathcal{J}_{k}^{\left(-\frac{1}{2},-\frac{1}{2}\right)}(x)\mathcal{J}_{m}^{\left(\frac{1}{2},-\frac{1}{2}\right)}(x)\varphi_{n,+}(x)(1-x)^{-1/2} (1+x)^{-1/2}  dx\times\right.\\
  \left.\int_{-1}^1 \mathcal{J}_{l}^{\left(\frac{1}{2},-\frac{1}{2}\right)}(y)\mathcal{J}_{k}^{\left(-\frac{1}{2},-\frac{1}{2}\right)}(y)\varphi_{n,-}(y)(1-y)^{-1/2} (1+y)^{-1/2}  dy\right) \\
  +\frac{1}{\pi}\sum_{k=0}^{\infty} W^{\left(-\frac{1}{2},-\frac{1}{2}\right)}(k)\mathcal{J}_{k}^{\left(-\frac{1}{2},-\frac{1}{2}\right)}(1)\varphi_{n,-}(1)\int_{-1}^1 \mathcal{J}_{k}^{\left(-\frac{1}{2},-\frac{1}{2}\right)}(x)\mathcal{J}_{m}^{\left(\frac{1}{2},-\frac{1}{2}\right)}(x)\varphi_{n,+}(x)(1-x)^{-1/2} (1+x)^{-1/2}  dx.
\end{multline}
Apply Lemma \ref{Lemma: 2} (3) to the second summation in \eqref{eq: 1},
\begin{multline*}
  \frac{1}{\pi}\sum_{k=0}^{\infty} W^{\left(-\frac{1}{2},-\frac{1}{2}\right)}(k)\mathcal{J}_{k}^{\left(-\frac{1}{2},-\frac{1}{2}\right)}(1)\varphi_{n,-}(1)\int_{-1}^1 \mathcal{J}_{k}^{\left(-\frac{1}{2},-\frac{1}{2}\right)}(x)\mathcal{J}_{m}^{\left(\frac{1}{2},-\frac{1}{2}\right)}(x)\varphi_{n,+}(x)(1-x)^{-1/2} (1+x)^{-1/2}  dx   \\
  =\mathcal{J}_{m}^{\left(\frac{1}{2},-\frac{1}{2}\right)}(1)\varphi_{n,\pm}(1).  
\end{multline*}
For the first summation, we apply Lemma \ref{Lemma: 2} (6),
\begin{multline*}
     \frac{1}{\pi^2} \sum_{k=0}^{\infty}   W^{\left(-\frac{1}{2},-\frac{1}{2}\right)}(k)\left(\int_{-1}^1 \mathcal{J}_{k}^{\left(-\frac{1}{2},-\frac{1}{2}\right)}(x)\mathcal{J}_{m}^{\left(\frac{1}{2},-\frac{1}{2}\right)}(x)\varphi_{n,+}(x)(1-x)^{-1/2} (1+x)^{-1/2}  dx\times\right.\\
  \left.\int_{-1}^1 \mathcal{J}_{l}^{\left(\frac{1}{2},-\frac{1}{2}\right)}(y)\mathcal{J}_{k}^{\left(-\frac{1}{2},-\frac{1}{2}\right)}(y)\varphi_{n,-}(y)(1-y)^{-1/2} (1+y)^{-1/2}  dy\right)\\
  =\frac{1}{\pi}\int_{-1}^1\mathcal{J}_{m}^{\left(\frac{1}{2},-\frac{1}{2}\right)}(y)\mathcal{J}_{l}^{\left(\frac{1}{2},-\frac{1}{2}\right)}(y)\varphi_{n,\pm}(y)(1-y)^{-1/2}(1+y)^{-1/2}  dy. 
\end{multline*}
   When $  m\ge 0$ and  $\ l=\text{virt}$, the proof is the same.    
\end{proof}
 
In the following, we omit computation of convolutions if it's similar to that of Proposition \ref{Prop: 1}. 

It is beneficial to write the results in Proposition \ref{Prop: 1} as contour integrals.  Note that 
\begin{equation*}
      \mathcal{J}_{m}^{\left(\pm\frac{1}{2},-\frac{1}{2}\right)}(x)\varphi_{n,\pm}(x)=\frac{1}{2\pi i}\oint \frac{\mathcal{J}_{m}^{\left(\pm\frac{1}{2},-\frac{1}{2}\right)}(u)\varphi_{n,\pm}(u)}{u-x} du,
\end{equation*}
where the $u$-contour  is a positively oriented simple loop that encircles point $x$. Then, if we enlarge the $u$-contour such that it encircles the interval $[-1,1]$,
\begin{multline*}
   \mathcal{T}^{n,-} \ast\phi_{n-1,+}^{n,-}(m,l)=\\
  \left\{\begin{array}{lc} 
   \frac{W^{\left(\frac{1}{2},-\frac{1}{2}\right)}(l)}{\pi}\int_{-1}^1\mathcal{J}_{l}^{\left(\frac{1}{2},-\frac{1}{2}\right)}(x)\frac{1}{2\pi i}\oint  \frac{\mathcal{J}_{m}^{\left(-\frac{1}{2},-\frac{1}{2}\right)}(u)\varphi_{n,+}(u)}{(u-1)(u-x)} 
    (1-x)^{1/2} (1+x)^{-1/2} du dx,         &  m,l \ge 0,\\
  \frac{1}{2\pi i}\oint \frac{\mathcal{J}_{m}^{\left(\frac{1}{2},-\frac{1}{2}\right)}(u)\varphi_{n,+}(u)}{u-1} du,    &  m\ge 0, \ l=\text{virt}.
  \end{array}\right.   
\end{multline*}

\begin{multline*}
   \mathcal{T}^{n,+} \ast\phi_{n,-}^{n,+}\ast \mathcal{T}^{n,-} \ast\phi_{n-1,+}^{n,-}(m,l)=\\
  \left\{    \begin{array}{lc}
  \frac{1}{\pi}\int_{-1}^1\mathcal{J}_{l}^{\left(\frac{1}{2},-\frac{1}{2}\right)}(x)\frac{1}{2\pi i}\oint  \frac{\mathcal{J}_{m}^{\left(\frac{1}{2},-\frac{1}{2}\right)}(u)\varphi_{n,\pm}(u)}{(u-1)(u-x)} 
    (1-x)^{1/2} (1+x)^{-1/2} du dx,         &  m,l \ge 0,\\
 \frac{1}{2\pi i} \oint \frac{\mathcal{J}_{m}^{\left(\frac{1}{2},-\frac{1}{2}\right)}(u)\varphi_{n,\pm}(u)}{u-1} du,   &  m\ge 0,\ l=\text{virt}.\\
    \end{array}\right. 
\end{multline*}

Following the same computation, we compute iterative convolutions of the functions.
\begin{Proposition}\label{Prop: 5}
For $k\ge 0$,
\begin{multline*}
  \mathcal{T}^{n,+} \ast\phi_{-}^{+}\ast\cdots\ast \mathcal{T}^{n-k,-} \ast\phi_{+}^{-}(m,l)=\\
  \left\{    \begin{array}{lc}
  \begin{split}
      \frac{1}{\pi}\int_{-1}^1\mathcal{J}_{l}^{\left(\frac{1}{2},-\frac{1}{2}\right)}(x) \frac{1}{(2\pi i)^{k+1}}\oint\cdots\oint
  \frac{H^{n,k}_m\left(u_n,u_{n-1},\ldots,u_{n-k}\right)}{(u_{n-k}-x)} du_n\cdots du_{n-k}\\
  \times(1-x)^{1/2}(1+x)^{-1/2} dx, 
  \end{split}
 &  m\ge 0,\ l\ge 0,   \\
  \frac{1}{(2\pi i)^{k+1}} \oint\cdots\oint H^{n,k}_m(u_n,u_{n-1},\ldots,u_{n-k}) du_n\cdots du_{n-k}, &  m\ge 0,\ l=\text{virt},\\
    \end{array}\right.   
\end{multline*}
where
\begin{equation*}
 H^{n,k}_m\left(u_n,u_{n-1},\ldots,u_{n-k+1}\right)=   \frac{J_{m}^{\left(\frac{1}{2},-\frac{1}{2}\right)}(u_{n})\varphi_{n,\pm}(u_{n})\varphi_{n-1,\pm}(u_{n-1})\cdots\varphi_{n-k,\pm}(u_{n-k})}{(u_{n}-1)(u_n-u_{  n-1})(u_{n-1}-1)(u_{n-1}-u_{n-2})\cdots(u_{n-k}-1)}
\end{equation*}
and the $u_i$-contour  is a positively oriented simple loop that encircles point $1$ together with the $u_{i-1}$-contour when $n-k+1\le i\le n$, the $u_{n-k}$-contour  is a positively oriented simple loop that encircles the interval $[-1,1]$.
\end{Proposition}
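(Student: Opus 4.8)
The plan is to argue by induction on $k$, with Proposition \ref{Prop: 1} serving both as the base case and as the single elementary move that gets iterated. Write $\mathcal{B}_j=\mathcal{T}^{j,+}\ast\phi_{-}^{+}\ast\mathcal{T}^{j,-}\ast\phi_{+}^{-}$, so that the convolution in the statement is $\mathcal{B}_n\ast\mathcal{B}_{n-1}\ast\cdots\ast\mathcal{B}_{n-k}$, the variables glued at consecutive joints being level-$(j,+)$ positions, on which $W^{(1/2,-1/2)}\equiv 1$ (the summations internal to each $\mathcal{B}_j$ have already been carried out in Proposition \ref{Prop: 1}). For $k=0$ the assertion is Proposition \ref{Prop: 1}(4), rewritten as a contour integral exactly by the manipulation displayed right after that proposition: insert $\mathcal{J}^{(1/2,-1/2)}_m(x)\varphi_{n,\pm}(x)=\tfrac1{2\pi i}\oint\tfrac{\mathcal{J}^{(1/2,-1/2)}_m(u_n)\varphi_{n,\pm}(u_n)}{u_n-x}\,du_n$ with a small $u_n$-loop about $x$, enlarge that loop so it additionally encircles $1$, and invoke Lemma \ref{Lemma: 2}(2) to identify the new residue at $u_n=1$ with the constant $\mathcal{J}^{(1/2,-1/2)}_m(1)\varphi_{n,\pm}(1)$; the combined integrand is then $H^{n,0}_m(u_n)/(u_n-x)$, and the $l=\mathrm{virt}$ column is the same computation with the outer $x$-integral deleted.

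For the inductive step I would peel off the lowest block, $\mathcal{B}_n\ast\cdots\ast\mathcal{B}_{n-k}=\bigl(\mathcal{B}_n\ast\cdots\ast\mathcal{B}_{n-k+1}\bigr)\ast\mathcal{B}_{n-k}$. By the induction hypothesis (the statement with $k-1$ in place of $k$ and the same $n$), the first factor evaluated at a level-$(n-k,+)$ position $z\ge 0$ equals $\tfrac1\pi\int_{-1}^1\mathcal{J}^{(1/2,-1/2)}_z(x)\,G(x)\,(1-x)^{1/2}(1+x)^{-1/2}\,dx$, where $G(x)=\tfrac1{(2\pi i)^k}\oint\cdots\oint H^{n,k-1}_m(u_n,\dots,u_{n-k+1})/(u_{n-k+1}-x)\,du_n\cdots du_{n-k+1}$ with the contours nested as prescribed. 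Substituting Proposition \ref{Prop: 1}(4) for $\mathcal{B}_{n-k}(z,l)$ and summing over $z$, the two occurrences of $\mathcal{J}^{(1/2,-1/2)}_z$ collapse through the reproducing identity Lemma \ref{Lemma: 2}(6) with $a=\tfrac12$, whose orthogonality weight $(1-x)^{1/2}(1+x)^{-1/2}$ is precisely the one against which $G$ appears above; applying it once at $\zeta=y$ and once at $\zeta=1$ yields
\[
\mathcal{B}_n\ast\cdots\ast\mathcal{B}_{n-k}(m,l)=-\tfrac1\pi\!\int_{-1}^1\! G(y)\,\mathcal{J}^{(1/2,-1/2)}_l(y)\,\varphi_{n-k,\pm}(y)(1-y)^{-1/2}(1+y)^{-1/2}\,dy\;+\;G(1)\,\varphi_{n-k,\pm}(1),
\]
which has the same shape as an elementary block with $G$ playing the role of $\mathcal{J}^{(1/2,-1/2)}_m$. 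Running the base-case rewriting once more on this expression — representing $G(x)\varphi_{n-k,\pm}(x)$ as a $u_{n-k}$-contour integral about $x$, then enlarging the $u_{n-k}$-loop to encircle all of $[-1,1]$ while keeping it strictly inside the correspondingly enlarged $u_{n-k+1}$-loop — introduces the new variable $u_{n-k}$, the denominator $(u_{n-k}-1)(u_{n-k}-x)$, and the coupling $(u_{n-k+1}-u_{n-k})^{-1}$ to the former innermost contour; collecting the pieces produces exactly $H^{n,k}_m$ and the nested-contour prescription of the statement. The $l=\mathrm{virt}$ case is parallel: $\mathcal{B}_{n-k}(z,\mathrm{virt})=\mathcal{J}^{(1/2,-1/2)}_z(1)\varphi_{n-k,\pm}(1)$, the $z$-sum collapses by Lemma \ref{Lemma: 2}(6) at $\zeta=1$, and one is left with $\tfrac1{(2\pi i)^{k+1}}\oint\cdots\oint H^{n,k}_m\,du$.

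The point that needs genuine care is the contour bookkeeping in that last rewriting. One has to verify that, with the nested contours of the statement, $G$ is evaluated only at points $u_{n-k}$ lying inside the $u_{n-k+1}$-loop, so that $G(u_{n-k})=\tfrac1{(2\pi i)^k}\oint\cdots\oint H^{n,k-1}_m/(u_{n-k+1}-u_{n-k})\,du$ — which uses that no pole of $H^{n,k-1}_m$ is swept out when the outer loops are enlarged from surrounding $[-1,1]$ to surrounding the $u_{n-k}$-loop; that when $u_{n-k}$ is integrated out the only enclosed singularities are $u_{n-k}=1$ and $u_{n-k}=x$, with $u_{n-k}=u_{n-k+1}$ lying outside; and that the residue at $u_{n-k}=1$ recombines with the remaining integral into $H^{n,k}_m$ rather than into a leftover boundary term. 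The one truly non-routine point is the behaviour at $x=1$, where $G$ looks a priori singular: the residue of the $u_{n-k+1}$-integrand at $u_{n-k+1}=1$ and its residue at $u_{n-k+1}=x$ cancel to first order as $x\to1$, so $G$ actually extends continuously (indeed smoothly) across $x=1$, which both legitimizes the $\zeta=1$ instance of Lemma \ref{Lemma: 2}(6) and identifies $G(1)$ with the finite contour integral $\tfrac1{(2\pi i)^k}\oint\cdots\oint H^{n,k-1}_m/(u_{n-k+1}-1)\,du$. Finally, the interchange of the infinite $z$-sum with the $x$- and $u$-integrals that precedes each use of Lemma \ref{Lemma: 2} is justified by the $C^1$ regularity on $[-1,1]$ of everything in play ($\varphi_{j,\pm}$, the Jacobi polynomials, and the finite contour integrals defining $G$).
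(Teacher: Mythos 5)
Your proposal is correct and is exactly the argument the paper has in mind: the paper omits the proof, asserting the result ``follows the same computation'' as Proposition \ref{Prop: 1}, and your induction on $k$ — collapsing the glued level-$(\cdot,+)$ sum via Lemma \ref{Lemma: 2}(6), reinstating the constant term via Lemma \ref{Lemma: 2}(2), and rewriting the result as a nested contour integral with the new innermost $u_{n-k}$-variable — is precisely that computation made explicit, with the contour-nesting and analyticity checks (in particular that $G$ is analytic inside the $u_{n-k+1}$-contour, so $G(1)$ is the finite integral and no spurious poles arise) handled correctly.
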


Now we are ready to compute $  \phi^{t^{2n_2-1/2+a_2}_{b_2},t^{2n_1-1/2+a_1}_{b_1}}$.
\begin{Proposition} \label{Prop: 3}
Suppose  $t^{2n_1-1/2+a_1}_{b_1}> t^{2n_2-1/2+a_2}_{b_2}$, if $n_1=n_2$, 
\begin{multline*}
 \phi^{t^{2n_2-1/2+a_2}_{b_2},t^{2n_1-1/2+a_1}_{b_1}}(s_1,s_2)=\frac{W^{\left(a_1,-\frac{1}{2}\right)}(s_1)}{\pi}\int_{-1}^1\mathcal{J}_{s_1}^{\left(a_1,-\frac{1}{2}\right)}(x)J_{s_2}^{\left(a_2,-\frac{1}{2}\right)}(x)\times\\ \varphi_{t^{2n_1-1/2+a_1}_{b_1}-t^{2n_2-1/2+a_2}_{b_2}} (x)(1-x)^{a_1}(1+x)^{-1/2}  dx;    
\end{multline*}
if $n_1<n_2$,
\begin{multline*}
        \phi^{t^{2n_2-1/2+a_2}_{b_2},t^{2n_1-1/2+a_1}_{b_1}}(s_1,s_2)\\      =\frac{W^{\left(a_1,-\frac{1}{2}\right)}(s_1)}{\pi}\int_{-1}^1\mathcal{J}_{s_1}^{\left(a_1,-\frac{1}{2}\right)}(x) g^{a_2}_{n_1+1,n_2}(x)
  \varphi_{t^{2n_1-1/2+a_1}_{b_1}-t^{2n_1}_{0}} (x)(1-x)^{a_1}(1+x)^{-1/2}  dx,
\end{multline*}
where
  \begin{multline*}
    g^{a_2}_{n_1+1,n_2}(x)=\frac{1}{(2\pi i)^{n_2-n_1}}\times\\
      \oint\cdots\oint \frac{J_{s_2}^{\left(a_2,-\frac{1}{2}\right)}(u_{n_2})\varphi_{t^{2n_2-1}_{c(2n_2-1)}-t^{2n_2-1/2+a_2}_{b_2}}(u_{n_2}) \varphi_{n_2-1,\pm}(u_{n_2-1})\cdots\varphi_{n_1+1,\pm}(u_{n_1+1})}{(u_{n_2}-1)(u_{  n_2}-u_{n_2-1})\cdots(u_{n_1+1}-1)(u_{n_1+1}-x)} du_{n_2}\cdots du_{n_1+1}.   
\end{multline*}
\end{Proposition}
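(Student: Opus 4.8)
The plan is to unfold the definition of $\phi^{t^{2n_2-1/2+a_2}_{b_2},t^{2n_1-1/2+a_1}_{b_1}}$ as an iterated convolution and to evaluate it one factor at a time, using Propositions \ref{Prop: 1} and \ref{Prop: 5} together with the identities of Lemma \ref{Lemma: 2}. Recall that this convolution is a chain consisting of an initial \emph{partial} time-evolution $\mathcal{T}^{n_2,a_2}_{t^{2n_2-1/2+a_2}_{b_2},\,t^{2n_2-1/2+a_2}_{c(2n_2-1/2+a_2)}}$, then alternating down-transitions $\phi_{-}^{+}$, $\phi_{+}^{-}$ interleaved with \emph{full} time-evolutions $\mathcal{T}^{m,\pm}$ over every intermediate level, and a final partial time-evolution $\mathcal{T}^{n_1,a_1}_{t^{2n_1-1/2+a_1}_{0},\,t^{2n_1-1/2+a_1}_{b_1}}$. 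We work under the standing hypothesis $t^{2n_1-1/2+a_1}_{b_1}>t^{2n_2-1/2+a_2}_{b_2}$, so the convolution is the non-trivial one (otherwise $\phi=0$ by convention).

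First dispose of $n_1=n_2$. If moreover $a_1=a_2$, the chain reduces to the single block $\mathcal{T}^{n_1,a_1}_{t^{2n_1-1/2+a_1}_{b_2},\,t^{2n_1-1/2+a_1}_{b_1}}$, which by Proposition \ref{Prop: 1}(1) equals $I_{a_1}^{\varphi_{t^{2n_1-1/2+a_1}_{b_1}-t^{2n_1-1/2+a_1}_{b_2}}}$, whose defining integral is exactly the asserted formula. If $(a_1,a_2)=\left(-\frac{1}{2},\frac{1}{2}\right)$, the chain is $\mathcal{T}^{n_1,+}_{t^{2n_1}_{b_2},\,t^{2n_1}_{c(2n_1)}}\ast\phi_{-}^{+}\ast\mathcal{T}^{n_1,-}_{t^{2n_1-1}_{0},\,t^{2n_1-1}_{b_1}}$: evaluating the first two factors as in the proof of Proposition \ref{Prop: 1}(2) — with the partial window replacing the full one $\varphi_{n_1,+}$ — and then convolving with $\mathcal{T}^{n_1,-}$, the sum over the intermediate index collapses by the reproducing identity Lemma \ref{Lemma: 2}(6), the two time-windows fuse into $\varphi_{t^{2n_1-1}_{b_1}-t^{2n_1}_{b_2}}$, and one obtains the stated formula with weight $(1-x)^{-1/2}(1+x)^{-1/2}$. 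The remaining case $(a_1,a_2)=\left(\frac{1}{2},-\frac{1}{2}\right)$ cannot occur, as it would violate $2n_1-1/2+a_1\le 2n_2-1/2+a_2$.

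For $n_1<n_2$, split the chain at both ends. The initial partial block, together with (when $a_2=\frac{1}{2}$) the ensuing full evolution at the odd level $2n_2-1$ immediately below it, is evaluated as in the proof of Proposition \ref{Prop: 1}; adjacency of consecutive time-windows makes their elapsed times fuse into the single factor $\varphi_{t^{2n_2-1}_{c(2n_2-1)}-t^{2n_2-1/2+a_2}_{b_2}}$, the same expression for either $a_2$. The remaining middle chain, running from this fused data down to level $2n_1+1$, has exactly the form treated in Proposition \ref{Prop: 5} with this fused factor in place of $\varphi_{n_2,\pm}$; applying that Proposition produces the $(n_2-n_1)$-fold contour integral whose integrand carries $J_{s_2}^{(a_2,-1/2)}(u_{n_2})$, the fused factor at $u_{n_2}$, the products $\varphi_{m,\pm}(u_m)$ for $n_1+1\le m\le n_2-1$, and the denominator cascade $(u_{n_2}-1)(u_{n_2}-u_{n_2-1})\cdots(u_{n_1+1}-1)(u_{n_1+1}-x)$, with the contours nested as in Proposition \ref{Prop: 5}; this is precisely $g^{a_2}_{n_1+1,n_2}(x)$. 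It remains to convolve with the block(s) at level $2n_1-1/2+a_1$ — just $\mathcal{T}^{n_1,+}_{t^{2n_1}_{0},\,t^{2n_1}_{b_1}}$ when $a_1=\frac{1}{2}$, and $\mathcal{T}^{n_1,+}_{t^{2n_1}_{0},\,t^{2n_1}_{c(2n_1)}}\ast\phi_{-}^{+}\ast\mathcal{T}^{n_1,-}_{t^{2n_1-1}_{0},\,t^{2n_1-1}_{b_1}}$ when $a_1=-\frac{1}{2}$ — and collapse the resulting sum over the last intermediate index by Lemma \ref{Lemma: 2}(6): this substitutes $x$ for the final internal integration variable, fuses the remaining time-windows into $\varphi_{t^{2n_1-1/2+a_1}_{b_1}-t^{2n_1}_{0}}(x)$, and leaves $\frac{W^{(a_1,-1/2)}(s_1)}{\pi}\int_{-1}^1\mathcal{J}_{s_1}^{(a_1,-1/2)}(x)\,g^{a_2}_{n_1+1,n_2}(x)\,\varphi_{t^{2n_1-1/2+a_1}_{b_1}-t^{2n_1}_{0}}(x)(1-x)^{a_1}(1+x)^{-1/2}\,dx$, which is the claim.

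The difficulty is organizational rather than analytic: one must keep the half-integer level labels straight, decide which $\varphi$-increment attaches to which integration variable, and verify after each use of $\mathcal{J}_m^{(\pm 1/2,-1/2)}(x)\,\varphi(x)=\frac{1}{2\pi i}\oint\frac{\mathcal{J}_m^{(\pm 1/2,-1/2)}(u)\,\varphi(u)}{u-x}\,du$ followed by enlarging the contour that each new $u_m$-contour still encircles $1$ together with the previous contour, while the innermost one encircles $[-1,1]$ and catches no zeros of the integrand — all uniformly over the four sign patterns of $(a_1,a_2)$. Every analytic ingredient (convergence of the Jacobi expansions, the summation formulas, the reproducing-kernel identity) is already packaged in Lemma \ref{Lemma: 2}, so no new estimates are required.
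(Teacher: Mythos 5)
Your proposal is correct and is essentially the paper's own argument: the paper's proof is a one-line instruction to convolve the leftover (partial time-evolution and boundary) blocks with the iterated-convolution formula of Proposition \ref{Prop: 5}, which is exactly what you carry out, with the additional bookkeeping over the parities of $(a_1,a_2)$ and the fusing of adjacent time windows made explicit. No gap; your write-up is simply a more detailed execution of the same route via Propositions \ref{Prop: 1}, \ref{Prop: 5} and Lemma \ref{Lemma: 2}.
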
   

\begin{proof}
Convoluting the leftover terms with the expression for $\mathcal{T}^{n,+} \ast\phi_{n,-}^{n,+}\ast\cdots\ast \mathcal{T}^{n-k,-} \ast\phi_{n-k-1,+}^{n-k,-}$ in Proposition \ref{Prop: 5} finishes the proof.
\end{proof}

\subsection{The matrix $M^{n,a}$ and $\psi_{k-l}^{t^{2k-1/2+a_1}_b,N}$}

\begin{Proposition}

When $k \ge l$,
\begin{multline*}
  \psi_{k-l}^{ t^{2k-1/2+a_1}_b,N}(s)=
  \frac{W^{\left(a_1,-\frac{1}{2}\right)}(s)}{\pi}\int_{-1}^1E^\omega(y)\mathcal{J}_{s}^{(a_1,-\frac{1}{2})}(y)
  \varphi_{t^{2k-1/2+a_1}_b-t^{N}_{0}}(y)\times\\
  (y-1)^{k-l} (1-y)^{a_1}(1+y)^{-1/2}  dy,     
\end{multline*}

When $k<l$,
\begin{multline*}
  \psi_{k-l}^{ t^{2k-1/2+a_1}_b,N}(s)=
     \frac{W^{\left(a_1,-\frac{1}{2}\right)}(s)}{\pi}\frac{1}{(2\pi i)^{l-k}}\int_{-1}^1\mathcal{J}_{s}^{(a_1,-\frac{1}{2})}(x)\times\\
 \oint\cdots\oint \frac{E^\omega(u_l)\varphi_{t^{2l-1}_{c(2l-1)}-t^{N}_{0}}(u_{l})\varphi_{l-1,\pm}(u_{l-1})\cdots\varphi_{k+1,\pm}(u_{k+1}) }{(u_{l}-1)(u_{l}-u_{l-1})\cdots(u_{k+1}-1)(u_{k+1}-x)} du_{l}\cdots du_{k+1}\times \\
   \varphi_{t^{2k-1/2+a_1}_b-t^{2k}_{0}}(x)  (1-x)^{a_1}(1+x)^{-1/2}  dx, 
\end{multline*}
 where the $u_i$-contour ($k+2\le i\le l$)  is a positively oriented simple loop that encircles point $1$ and $u_{i-1}$-contour, the $u_{k+1}$-contour  is a positively oriented simple loop that encircles interval $[-1,1]$.
\end{Proposition}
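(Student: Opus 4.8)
The plan is to unfold the defining convolution $\psi_{k-l}^{t^{2k-1/2+a_1}_b,N}=\psi_{n-l}^{n,a}\ast\phi^{t^N_0,\,t^{2k-1/2+a_1}_b}$ one partition-length at a time, following the template of the proofs of Propositions~\ref{Prop: 1}, \ref{Prop: 5} and \ref{Prop: 3}. I assume throughout that $a=1/2$ (the case $a=-1/2$ is addressed at the end). First I would use the semigroup property of Proposition~\ref{Prop: 1}(1), together with $\varphi_s\varphi_t=\varphi_{s+t}$, to collapse every block of same-level transitions inside $\phi^{t^N_0,\,t^{2k-1/2+a_1}_b}$ into a single operator $I_{\pm}^{\varphi}$; then each traversed level contributes exactly one weight $\varphi_{m,+}$ or $\varphi_{m,-}$, and the partially traversed bottom block at partition-length $k$ contributes a partial weight. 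Convolving $\psi_{n-l}^{n,a}$ with the first operator $I_+^{\varphi_{n,+}}$ and applying the reproducing identity Lemma~\ref{Lemma: 2}(6) merely multiplies the integrand $E^\omega(x)(x-1)^{n-l}$ by $\varphi_{n,+}(x)$.

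The heart of the computation is a single partition-length decrement $m\to m-1$, which unwinds in three micro-steps: a plus-to-minus transition (Lemma~\ref{Lemma: 2}(5), which flips the Jacobi parameter $(1/2,-1/2)\mapsto(-1/2,-1/2)$ and the weight $(1-x)^{1/2}\mapsto(1-x)^{-1/2}$ and picks up $\varphi_{m,+}$), a reproducing step for $\mathcal{T}^{m,-}$ (which multiplies the integrand by $\varphi_{m,-}$), and a minus-to-plus transition (Lemma~\ref{Lemma: 2}(4), which replaces the integrand $T(x)$ by $T(1)-T(x)$, the weight still being $(1-x)^{-1/2}$). Writing the current integrand as $E^\omega(x)(x-1)^{p}\varphi_{\mathrm{acc}}(x)$ times whatever contour structure has already been generated, two regimes arise. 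If $p\ge1$, then $T(1)=0$ and $T(1)-T(x)=-T(x)$, and the elementary identity $(x-1)^{p}(1-x)^{-1/2}=-(x-1)^{p-1}(1-x)^{1/2}$ restores a ``$+$'' weight, so the net effect of the decrement is $p\mapsto p-1$ and multiplication by $\varphi_{m,\pm}$, with no new contour variable. If $p=0$, then $T(1)\neq0$ in general, and I would invoke the residue identity $T(1)-T(x)=(1-x)\cdot\frac{1}{2\pi i}\oint\frac{T(u)}{(u-1)(u-x)}\,du$, whose prefactor $(1-x)$ again restores a ``$+$'' weight; this spawns one more contour variable $u_m$, nested inside the ones already present, on a loop encircling $1$ together with the previously generated contour.

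Iterating this $n-k$ times from level $N$ down to level $2k-1/2+a_1$ and then applying the bottom block produces the two stated cases. When $k\ge l$ one has $n-k\le n-l$, so every decrement is of the first type, no contour variable is ever created, the accumulated $\varphi_{m,\pm}$'s telescope (using the time chain $t^N_{c(N)}=t^{N-1}_0,\ t^{N-1}_{c(N-1)}=t^{N-2}_0,\dots$) into $\varphi_{t^{2k-1/2+a_1}_b-t^N_0}$, the surviving power is $(y-1)^{k-l}$, and the bottom block at partition-length $k$ supplies $\mathcal{J}_s^{(a_1,-1/2)}$, $W^{(a_1,-1/2)}(s)$ and the weight $(1-y)^{a_1}(1+y)^{-1/2}$; this is exactly the displayed single integral. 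When $k<l$, the first $n-l$ decrements exhaust the power while carrying $E^\omega$ along, multiplied by the accumulated weight $\varphi_{n,\pm}\cdots\varphi_{l+1,\pm}\varphi_{l,\pm}=\varphi_{t^{2l-1}_{c(2l-1)}-t^N_0}$; the remaining $l-k$ decrements are of the second type, and they place $E^\omega(u_l)\varphi_{t^{2l-1}_{c(2l-1)}-t^N_0}(u_l)$ on the outermost variable $u_l$, the factors $\varphi_{l-1,\pm},\dots,\varphi_{k+1,\pm}$ on $u_{l-1},\dots,u_{k+1}$, assemble the denominator $(u_l-1)(u_l-u_{l-1})\cdots(u_{k+1}-1)(u_{k+1}-x)$ with the innermost contour $u_{k+1}$ around $[-1,1]$, and leave the bottom block to contribute $\mathcal{J}_s^{(a_1,-1/2)}$, $W^{(a_1,-1/2)}(s)$ and $\varphi_{t^{2k-1/2+a_1}_b-t^{2k}_0}(x)(1-x)^{a_1}(1+x)^{-1/2}$, which is the displayed $(l-k)$-fold contour integral.

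The obstacle is organizational rather than conceptual: one has to track the alternation of Jacobi parameters and of the weights $(1\mp x)^{\pm1/2}$ across ``$+$'' and ``$-$'' levels so that the correct clause of Lemma~\ref{Lemma: 2} is applied at every micro-step and the jump of $W^{(-1/2,-1/2)}$ at $0$ is consistently matched, verify that the $\varphi$-weights telescope exactly to the claimed elapsed-time differences, and separate the $a_1=1/2$ and $a_1=-1/2$ shapes of the bottom block (for $a_1=-1/2$ the extra full weight $\varphi_{k,+}$ is precisely what turns $\varphi_{t^{2k-1}_b-t^{2k-1}_0}$ into $\varphi_{t^{2k-1}_b-t^{2k}_0}$). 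The case $a=-1/2$ runs in the same way, except that the top of the tower is a half-decrement $\mathcal{T}^{n,-}\ast\phi^-_+$ that contributes $\varphi_{n,-}$ and lowers the power by one before the first full decrement. Since everything past this set-up is the routine bookkeeping already carried out for Propositions~\ref{Prop: 5} and \ref{Prop: 3}, it is a computation whose outcome alone needs to be recorded.
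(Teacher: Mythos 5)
Your proposal is correct and is essentially the paper's own argument: the paper proves this proposition precisely by such a direct calculation of convolutions (the alternative it mentions being induction as in Theorem 4.4 of \cite{borodin2011random}), i.e.\ unfolding the chain level by level with the identities of Lemma \ref{Lemma: 2} and the residue rewriting $T(1)-T(x)=(1-x)\cdot\frac{1}{2\pi i}\oint\frac{T(u)}{(u-1)(u-x)}\,du$ already used for Propositions \ref{Prop: 1}, \ref{Prop: 5} and \ref{Prop: 3}. One wording caveat only: each newly spawned contour must lie \emph{inside} the previously generated ones while encircling $1$ and $x$ (yielding the stated nesting with $u_{k+1}$ innermost around $[-1,1]$ and $u_l$ outermost), so your clause ``nested inside the ones already present'' is the accurate one, not ``encircling \dots the previously generated contour''.
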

\begin{proof}
The proof can be done by direct calculation of convolutions, or by the induction method as in the proof of Theorem 4.4 of \cite{borodin2011random}.
\end{proof}

\begin{Lemma}
The matrix $M^{n,a}$ is upper triangular and invertible.
\end{Lemma}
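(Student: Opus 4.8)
The plan is to evaluate every entry $M^{n,a}_{k,l}$ in closed form, after which the triangular structure is immediate. The key observation is that the long convolution defining $M^{n,a}_{k,l}$ is, apart from its last factor, one of the functions $\psi^{\,\cdot\,,N}_{\bullet}$ already computed: by the definitions of $\phi^{t^N_0,\,t^{2k-1}_{c(2k-1)}}$ and of $\psi^{\,\cdot\,,N}_{k-l}$, the chain $\psi^{n,a}_{n-l}\ast\mathcal{T}^{n,a}\ast\phi_{-a}^{a}\ast\mathcal{T}^{n-1/2+a,-a}\ast\cdots\ast\mathcal{T}^{k,-}$ coincides with $\psi^{\,t^{2k-1}_{c(2k-1)},\,N}_{k-l}$, while the remaining factor contributes $\phi_{+}^{-}(s,\text{virt})=\phi_{+}^{-}(s,-1)=1$ for every $s\ge 0$ — the same mechanism that produces the ``$l=\text{virt}$'' branches in Proposition~\ref{Prop: 1}. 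Hence
\begin{equation*}
M^{n,a}_{k,l}=\sum_{s\ge 0}\psi^{\,t^{2k-1}_{c(2k-1)},\,N}_{k-l}(s).
\end{equation*}

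Next I would handle the indices with $k\ge l$, which is all that is needed. For those, the preceding proposition writes $\psi^{\,t^{2k-1}_{c(2k-1)},\,N}_{k-l}(s)$ as $\tfrac{W^{(-1/2,-1/2)}(s)}{\pi}\int_{-1}^{1}\mathcal{J}^{(-1/2,-1/2)}_{s}(y)\,T(y)\,(1-y)^{-1/2}(1+y)^{-1/2}\,dy$ with symbol $T(y)=E^{\omega}(y)\,\varphi_{t^{2k-1}_{c(2k-1)}-t^N_0}(y)\,(y-1)^{k-l}$, which lies in $C^{1}[-1,1]$. Summing over $s\ge 0$ and applying the reproducing identity of Lemma~\ref{Lemma: 2}(3) collapses the sum to $T(1)$; since $E^{\omega}(1)=1$ (immediate from \eqref{eq: 11}) and $\varphi_t(1)=1$ for every $t$, this equals $(1-1)^{k-l}$, i.e. $0$ when $k>l$ and $1$ when $k=l$. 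Therefore $M^{n,a}_{k,l}=0$ for $k>l$, so $M^{n,a}$ is upper triangular, and every diagonal entry equals $1$, so $M^{n,a}$ is unipotent; in particular $\det M^{n,a}=1$ and $M^{n,a}$ is invertible. The above-diagonal entries ($k<l$), which would come from the contour-integral branch of that proposition, play no role in the argument.

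The computation is short; the one place that needs care is the reduction in the first paragraph, where one must match, term by term, the convolution chain written in the definition of $M^{n,a}$ — including the alternation of the intermediate $\phi$-blocks, which depends on $a=\pm\tfrac12$ and on the parity of the level — with the chain packaged inside $\phi^{t^N_0,\,t^{2k-1}_{c(2k-1)}}$, and check that the convention $\text{virt}=-1$ indeed makes the final $\phi$-factor identically $1$ on $\mathbb{Z}_{\ge 0}$, so that the last convolution is just a summation. Both are routine from the definitions, after which Lemma~\ref{Lemma: 2}(3) does all the work.
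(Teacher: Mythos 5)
Your argument is correct, and it arrives at the same closed forms as the paper, but it is organized differently. The identification $M^{n,a}_{k,l}=\sum_{s\ge 0}\psi^{\,t^{2k-1}_{c(2k-1)},\,N}_{k-l}(s)$ is indeed what the definitions give: the chain defining $M^{n,a}_{k,l}$, stripped of its final factor, is exactly $\psi^{n,a}_{n-l}\ast\phi^{t^{N}_{0},\,t^{2k-1}_{c(2k-1)}}$, and $\phi^{-}_{+}(s,\text{virt})=\phi^{-}_{+}(s,-1)=1$ for all $s\ge 0$, so the last convolution is a plain sum; Lemma~\ref{Lemma: 2}(3) then collapses that sum to the symbol $E^{\omega}(y)\varphi_{t^{2k-1}_{c(2k-1)}-t^{N}_{0}}(y)(y-1)^{k-l}$ evaluated at $y=1$, i.e.\ $0$ for $k>l$ and $1$ for $k=l$. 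The paper instead writes $M^{n,a}_{k,l}$ directly as a nested contour integral (in the spirit of Proposition~\ref{Prop: 5}) whose numerator carries $(u_n-1)^{n-l}$; for $k>l$ the $(u_i-1)$-poles are successively cancelled and the integral vanishes, while for $k\le l$ the same representation shows the diagonal entries are nonzero. The two evaluations are equivalent — your use of Lemma~\ref{Lemma: 2}(3) is precisely the ``summing over $s$ acts as evaluation at $x=1$'' mechanism that the residue at $u=1$ implements on the contour side — but your route reuses the already-stated formula for $\psi^{\,t,N}_{k-l}$ (the $k\ge l$ branch) rather than re-deriving a contour representation, and it gives the slightly sharper conclusion that $M^{n,a}$ is unipotent (diagonal entries equal $1$, since $E^{\omega}(1)=\varphi_t(1)=1$), where the paper only records nonvanishing; the paper's formula, in exchange, also covers the entries with $k<l$, which neither proof needs. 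The only caveats in your write-up — smoothness of $E^{\omega}$ on $[-1,1]$ so that Lemma~\ref{Lemma: 2}(3) applies, and convergence of the convolution so the sum over $s$ is legitimate — are implicit assumptions the paper makes throughout, so they do not constitute a gap.
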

\begin{proof}

We first show that when $k>l$, $ M_{k,l}^{n,a}=0$.
Recall that
\begin{equation*}
   M_{k,l}^{n,a}= \psi^{n,a}_{n-l} \ast \mathcal{T}^{n,a} \ast\phi_{-a}^{a}\ast    \mathcal{T}^{n-1/2+a,-a}\ast \cdots\ast \mathcal{T}^{k,-}\ast\phi_{+}^{-}(\text{virt}).
\end{equation*}

So,
\begin{equation*}
   M_{k,l}^{n,a}=  \frac{1}{(2\pi i)^{n-k+1}} \oint\cdots\oint \frac{E^\omega(u_n)(u_n-1)^{n-l}\varphi_{n,a}(u_N)\cdots \varphi_{k,\pm}(u_{k})}{(u_n-1)(u_n-u_{n-1})\cdots(u_{k}-1)} du_n\cdots du_{k}=0.
\end{equation*}
When $k\le l$,
\begin{equation*}
   M_{k,l}^{n,a}=  \frac{1}{(2\pi i)^{l-k+1}} \oint\cdots\oint \frac{E^\omega(u_l)\varphi_{t^{2l-1}_{c(2l-1)}-t^{2N-1/2+a}_{0}}(u_{l})\varphi_{l-1,\pm}(u_{l-1})\cdots \varphi_{k,\pm}(u_k)}{(u_{l}-1)(u_l-u_{l-1})\cdots(u_k-1)} du_l\cdots du_k.
\end{equation*}
Thus the diagonal elements of $M^{n,a}$ are nonzero and $M^{n,a}$ is invertible.
\end{proof}

\subsection{$ \Phi_{k-j}^{ t^{2k-1/2+a_1}_b,N}(s)$ and $\displaystyle {\sum_{k=1}^{n_2}\psi_{n_1-k}^{t^{2n_1-1/2+a_1}_{b_1},N}(s_1)\Phi_{n_2-k}^{ t^{2n_2-1/2+a_2}_{b_2},N}(s_2)}$ }

\begin{Lemma}
For any $1\le j\le  k\le N$, define
\begin{equation*}
    \Phi_{k-j}^{ t^{2k-1/2+a_1}_b,N}(s)=\frac{1}{2\pi i}\oint\frac{J_{s}^{\left(a_1,-\frac{1}{2}\right)}(u)}{E^\omega(u)\varphi_{t^{2k-1/2+a_1}_b-t^{N}_{0}}(u)}\frac{1}{(u-1)^{k-j+1}}du.
\end{equation*}

Then,
\begin{enumerate}[{(1)}]
    \item $\left\{\Phi_{k-j}^{ t^{2k-1/2+a_1}_b,N}(s)\right\}_{j=1}^k$ is a  basis of the linear span of 
    \begin{equation*}
        \left\{\mathcal{T}_{t^{2k-1/2+a_1}_b,t^{2k-1/2+a_1}_{c\left(2k-1/2+a_1\right)}}^{k,a_1}\ast\cdots\ast\mathcal{T}_{t^{2j-1}_0,t^{2j-1}_{c\left(2j-1\right)}}^{j,-}\ast \phi_{+}^{-}(s,\text{virt})\right\}_{j=1}^k.
    \end{equation*}
    \item For $0\le j_1,j_2 \le k-1$, 
    \begin{equation*}
        \sum_{s\ge 0} \Phi_{j_1}^{t^{2k-1/2+a_1}_b,N}(s)\psi_{j_2}^{t^{2k-1/2+a_1}_b,N}(s)=\delta_{j_1,j_2}.
    \end{equation*}
\end{enumerate}
\end{Lemma}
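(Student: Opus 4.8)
The plan is to verify directly that the explicit functions $\Phi_{k-j}^{t^{2k-1/2+a_1}_b,N}$ just defined satisfy both (1) and (2); since Lemma \ref{Lemma: 1} (together with the fact, just established, that $M^{n,a}$ is upper triangular and invertible) guarantees through the Eynard--Mehta mechanism behind Theorem 4.2 of \cite{borodin2008large} and Lemma 3.4 of \cite{Borodin_2007} that a family with properties (1)--(2) exists and is unique, it is enough to check the candidate. I would prove (2) first, since the biorthogonality is the cleaner computation, and then deduce (1) by a residue analysis of the defining contour integrals. Throughout write $\varphi(u):=\varphi_{t^{2k-1/2+a_1}_b-t^N_0}(u)$, and note that $E^\omega$ and $\varphi$ are holomorphic and nonvanishing on a complex neighbourhood of $[-1,1]$, with $E^\omega(1)=\varphi(1)=1$, so the contour $C$ can be taken inside that neighbourhood, where the only enclosed singularity of the integrand of $\Phi_{k-j}$ is $u=1$.

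For (2): substitute into $\sum_{s\ge0}\Phi_{j_1}(s)\psi_{j_2}(s)$ the defining contour integral of $\Phi_{j_1}$ and the $k\ge l$ formula for $\psi_{j_2}$ from the preceding Proposition (the relevant branch, since $j_2\in\{0,\dots,k-1\}$ means $j_2=k-l$ with $1\le l\le k$). Because $E^\omega(y)\varphi(y)(y-1)^{j_2}$ extends holomorphically across $[-1,1]$, its Jacobi coefficients $\psi_{j_2}(s)$ decay exponentially in $s$, so $\sum_{s\ge0}\psi_{j_2}(s)\mathcal J^{(a_1,-1/2)}_s(\zeta)$ converges on a neighbourhood of $[-1,1]$ and represents there the holomorphic continuation of $E^\omega(\zeta)\varphi(\zeta)(\zeta-1)^{j_2}$, by the reproducing identity of Lemma \ref{Lemma: 2}(6). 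This justifies interchanging the $s$-summation with the $u$-contour (pulled into that neighbourhood) and the $y$-integral, and collapses the $s$-sum together with the $y$-integral; keeping track of the weights $W^{(a_1,-1/2)}(s)$ and of the normalisation $\mathcal J^{(a_1,-1/2)}_s=J^{(a_1,-1/2)}_s/c_s$, what remains is
\[
\sum_{s\ge 0}\Phi_{j_1}(s)\,\psi_{j_2}(s)=\frac{1}{2\pi i}\oint_C (u-1)^{\,j_2-j_1-1}\,du=\delta_{j_1,j_2},
\]
the last equality by the residue theorem, since for $0\le j_1,j_2\le k-1$ the integrand has its only singularity at $u=1$, with nonzero residue there precisely when $j_2-j_1-1=-1$.

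For (1): the key remark is that for $g$ holomorphic and nonvanishing at $u=1$ and $i\ge1$, the residue theorem gives $\tfrac{1}{2\pi i}\oint_C\tfrac{J^{(a_1,-1/2)}_s(u)\,g(u)}{(u-1)^i}\,du=\sum_{p=0}^{i-1}\tfrac{g^{(i-1-p)}(1)}{p!\,(i-1-p)!}\,e_p(s)$, where $e_p(s):=\partial_u^{p}J^{(a_1,-1/2)}_s(u)\big|_{u=1}$; as $i$ runs over $1,\dots,k$ this is a triangular change of basis with nonvanishing diagonal entries $g(1)/(i-1)!$. Taking $g=(E^\omega\varphi)^{-1}$, so $g(1)=1$, the family $\{\Phi_{k-j}^{t^{2k-1/2+a_1}_b,N}\}_{j=1}^{k}$ is the image of $\{e_p\}_{p=0}^{k-1}$ under a triangular invertible matrix, hence a basis of the $k$-dimensional space $\mathcal P_k:=\operatorname{span}\{e_0,\dots,e_{k-1}\}$, with the $j$-th member having nonzero component along $e_{k-j}$. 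For the other family, write $f_j(s):=\mathcal T^{k,a_1}_{t^{2k-1/2+a_1}_b,\,t^{2k-1/2+a_1}_{c(2k-1/2+a_1)}}\ast\cdots\ast\mathcal T^{j,-}_{t^{2j-1}_0,\,t^{2j-1}_{c(2j-1)}}\ast\phi_{+}^{-}(s,\text{virt})$ and evaluate it by the nested-contour machinery of Propositions \ref{Prop: 1}, \ref{Prop: 5}, \ref{Prop: 3}: it is a $(k-j+1)$-fold contour integral, and performing its contour integrations successively by residues — each at $u_i=1$ and at the nested pole $u_i=u_{i-1}$, the time factors $\varphi_{m,\pm}$ all being holomorphic with $\varphi_{m,\pm}(1)=1$ — reduces $f_j$, as a function of $s$, to an explicit element of $\operatorname{span}\{e_0,\dots,e_{k-j}\}$ whose $e_{k-j}$-component is nonzero (a product of the values $\varphi_{m,\pm}(1)=1$ and combinatorial constants). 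Thus $\{f_j\}_{j=1}^{k}$ is also a triangular invertible image of $\{e_p\}_{p=0}^{k-1}$, so $\operatorname{span}\{\Phi_{k-j}\}_{j=1}^{k}=\mathcal P_k=\operatorname{span}\{f_j\}_{j=1}^{k}$ and each family is a basis of it, which is (1).

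The main obstacle is the bookkeeping in (1): one must track carefully which $u_i$-contour encircles which others together with the point $u=1$, so that the successive residue evaluations are legitimate and the resulting change-of-basis matrices are genuinely triangular with nonvanishing diagonal — this is where the hypothesis that $M^{n,a}$ is upper triangular and invertible enters, and where the fact that every $\varphi_{m,\pm}$ and $E^\omega$ is holomorphic and equals $1$ at $u=1$ is essential for landing in the single space $\mathcal P_k$. A secondary technical point is the Fubini interchange in (2), which rests on the exponential decay of the $\psi_{j_2}(s)$, together with the careful use of Lemma \ref{Lemma: 2}(6) and of the normalisation relating $\mathcal J^{(a_1,-1/2)}_s$ and $J^{(a_1,-1/2)}_s$ on the two sides.
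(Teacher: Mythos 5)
Your proof is correct and follows essentially the same route as the paper: part (2) is the reproducing identity of Lemma \ref{Lemma: 2}(6) combined with a residue computation at $u=1$, and part (1) rests on the observation that both families are triangular (with nonvanishing diagonal) with respect to the functions $\partial_u^{p}J_s^{(a_1,-1/2)}(u)\big|_{u=1}$, $0\le p\le k-1$, which is exactly the paper's argument, only spelled out in more detail. The only small inaccuracy is your remark that the upper triangularity and invertibility of $M^{n,a}$ enters the verification of (1); that hypothesis is needed for Lemma \ref{Lemma: 1} (the Eynard--Mehta step), not for checking the two properties of the explicit functions $\Phi_{k-j}^{t^{2k-1/2+a_1}_b,N}$.
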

\begin{proof}
To prove (1), we use the fact that $ \left\{\frac{\partial^j}{\partial u^j}J_{s}^{\left(a_1,-\frac{1}{2}\right)}(u)\Big|_{u=1}\right\}_{j=0}^{k-1}$ is a set of polynomials in variable $s$ of degree $2j+1/2+a_1$, which is a linear basis of both the linear span of 
    \begin{equation*}
      \left\{\mathcal{T}_{t^{2k-1/2+a_1}_b,t^{2k-1/2+a_1}_{c\left(2k-1/2+a_1\right)}}^{k,a_1}\ast\cdots\ast\mathcal{T}_{t^{2j-1}_0,t^{2j-1}_{c\left(2j-1\right)}}^{j,-}\ast \phi_{+}^{-}(s,\text{virt})\right\}_{j=1}^k
    \end{equation*}
and    $\left\{\Phi_{k-j}^{ t^{2k-1/2+a_1}_b,N}(s)\right\}_{j=1}^k$.\\
(2) follows from Lemma \ref{Lemma: 2} (6).
\end{proof}
Next, we calculate $\sum_{k=1}^{n_2}\psi_{n_1-k}^{t^{2n_1-1/2+a_1}_{b_1},N}(s_1)\Phi_{n_2-k}^{ t^{2n_2-1/2+a_2}_{b_2},N}(s_2)$.
When $n_1<n_2$, let 
\begin{equation*}
   f^N_{t^{2n_2-1/2+a_2}_{b_2}}(u)=\frac{J_{s_2}^{\left(a_2,-\frac{1}{2}\right)}(u)}{E^\omega(u)\varphi_{t^{2n_2-1/2+a_2}_{b_2}-t^{N}_{0}}(u)} 
\end{equation*}
and
\begin{equation*}
 h^N_{n_1+1,k}(x)=\frac{1}{(2\pi i)^{k-n_1}}\oint\cdots\oint \frac{E^\omega(u_{k})\varphi_{t^{2k-1}_{c(2k-1)}-t^{N}_{0}}(u_{k})\cdots \varphi_{n_1+1,\pm}(u_{n_1+1})}{(u_{k}-1)(u_{k}-u_{k-1})\cdots(u_{n_1+1}-1)(u_{n_1+1}-x)} du_{k}\cdots du_{n_1+1}.
\end{equation*}
Then
\begin{multline*}
    g^{a_2}_{n_1+1,n_2}(x)=\frac{1}{(2\pi i)^{n_2-n_1}}\times \\
     \oint\cdots\oint \frac{f^N_{t^{2n_2-1/2+a_2}_{b_2}}(u_{n_2})E^\omega(u_{n_2})\varphi_{t^{2n_2-1}_{c(2n_2-1)}-t^{N}_{0}}(u_{n_2}) \cdots\varphi_{n_1+1,\pm}(u_{n_1+1})}{(u_{n_2}-1)(u_{  n_2}-u_{n_2-1})\cdots(u_{n_1+1}-1)(u_{n_1+1}-x)} du_{n_2}\cdots du_{n_1+1},
\end{multline*}
\begin{equation*}
  \Phi_{n_2-k}^{ t^{2n_2-1/2+a_2}_b,N} = \frac{1}{2\pi i}\oint \frac{f^N_{t^{2n_2-1/2+a_2}_{b_2}}(u)}{(u-1)^{n_2-k+1}}du.
\end{equation*}

\begin{Lemma}\label{Lemma: 3} When $n_1< n_2$,
\begin{multline*}
    \sum_{k=n_1+1}^{n_2}h^N_{n_1+1,k}(x)* \Phi_{n_2-k}^{ t^{2n_2-1/2+a_2}_b,N}
   %\\
    =\frac{1}{2\pi i}\oint \frac{f^N_{t^{2n_2-1/2+a_2}_{b_2}}(u)}{(u-1)^{n_2-n_1}(x-u)}duE^\omega(x)\varphi_{t^{2n_1+1}_{c(2n_1+1)}-t^{N}_{0}}(x)+g^{a_2}_{n_1+1,n_2}(x).\\
 \end{multline*}  
\end{Lemma}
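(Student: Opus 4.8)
The plan is to substitute the definitions of $h^N_{n_1+1,k}$ and $\Phi_{n_2-k}^{t^{2n_2-1/2+a_2}_b,N}$ and to evaluate the sum over $k$ by residue calculus on the nested contour integrals. Write $f$ for $f^N_{t^{2n_2-1/2+a_2}_{b_2}}$. Since $\Phi_{n_2-k}^{t^{2n_2-1/2+a_2}_b,N}=\frac{1}{2\pi i}\oint \frac{f(w)}{(w-1)^{n_2-k+1}}\,dw$, with the $w$-contour a small positively oriented loop around $1$ containing no zero of $E^\omega$, I would first interchange the finite sum with this integral to get
\[
\sum_{k=n_1+1}^{n_2}h^N_{n_1+1,k}(x)\ast\Phi_{n_2-k}^{t^{2n_2-1/2+a_2}_b,N}=\frac{1}{2\pi i}\oint f(w)\Bigg(\sum_{k=n_1+1}^{n_2}\frac{h^N_{n_1+1,k}(x)}{(w-1)^{n_2-k+1}}\Bigg)\,dw,
\]
so that everything reduces to a closed form for the bracketed sum of nested $(k-n_1)$-fold contour integrals weighted by $(w-1)^{-(n_2-k+1)}$.

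The heart of the argument is that this bracketed sum telescopes. Doing the $u$-integrals from the innermost variable $u_{n_1+1}$ (whose contour encircles $[-1,1]$, hence both $1$ and $x$) outward, each $h^N_{n_1+1,k}(x)$ splits, by the residue theorem, into a contribution from the pole at $u_{n_1+1}=x$ — which after the remaining integrations has the same shape as $\varphi_{n_1+1,\pm}(x)(x-1)^{-1}h^N_{n_1+2,k}(x)$ — and a contribution from the pole at $u_{n_1+1}=1$, which raises the order of the pole at the next variable $u_{n_1+2}$ by one and produces a factor $(1-x)^{-1}$. Iterating down the chain, the ``pole at $1$'' branches stack the powers of $(u-1)$ into $(u-1)^{-(n_2-n_1)}$ while leaving a single surviving $x$-pole responsible for the prefactor $E^\omega(x)\varphi_{t^{2n_1+1}_{c(2n_1+1)}-t^N_0}(x)$ (every $E^\omega$ and $\varphi$ factor equals $1$ at the points $u_i=1$), and together with the $w$-integral these reassemble into $\frac{1}{2\pi i}\oint \frac{f(u)}{(u-1)^{n_2-n_1}(x-u)}\,du\cdot E^\omega(x)\varphi_{t^{2n_1+1}_{c(2n_1+1)}-t^N_0}(x)$. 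The ``pole at $x$'' branches, on the other hand, recombine — after absorbing the $w$-integral into the outermost variable via $f(w)E^\omega(w)=J_{s_2}^{(a_2,-1/2)}(w)/\varphi_{t^{2n_2-1/2+a_2}_{b_2}-t^N_0}(w)$ — into exactly the $(n_2-n_1)$-fold integral defining $g^{a_2}_{n_1+1,n_2}(x)$.

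A cleaner way to make this rigorous is induction on $n_2-n_1$. The base case $n_2=n_1+1$ is the single term $h^N_{n_1+1,n_1+1}(x)\,f(1)$, and evaluating the one-dimensional integral $h^N_{n_1+1,n_1+1}(x)$ by its residues at $u=1$ and $u=x$ (using $E^\omega(1)=1$ and $\varphi_\bullet(1)=1$) gives the claim at once. For the inductive step one peels off the $k=n_1+1$ term, rewrites $h^N_{n_1+1,k}(x)$ for $k\ge n_1+2$ through the residue at $u_{n_1+1}=x$ plus a collapsed ``$u_{n_1+1}=1$'' piece as above, applies the induction hypothesis to the pair $(n_1+1,n_2)$, and checks that the leftover $k=n_1+1$ term together with the collapsed pieces exactly accounts for the change of boundary term $(u-1)^{-(n_2-n_1-1)}\to(u-1)^{-(n_2-n_1)}$ and rebuilds $g^{a_2}_{n_1+1,n_2}$ from $g^{a_2}_{n_1+2,n_2}$.

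I expect the main obstacle to be not the algebra itself but the careful contour bookkeeping: tracking which of the poles (at $u_i=1$, $u_i=u_{i\pm1}$, $u_i=x$, and the zeros of $E^\omega$) lie inside each nested contour, distinguishing the small loops around $1$ used by $\Phi$ from the loops around $[-1,1]$ used in $h^N$ and $g^{a_2}$, making sure no contour deformation crosses a zero of $E^\omega$, and correctly handling the higher-order poles $(w-1)^{-(n_2-k+1)}$ when the integrals are collapsed.
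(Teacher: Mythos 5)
Your proposal is correct and follows essentially the same route as the paper: induction on $n_2-n_1$ with base case $n_2=n_1+1$, the inductive step being a residue evaluation at the two poles $u=1$ and $u=x$ of the integration variable adjacent to $x$, using $E^\omega(1)=\varphi_t(1)=1$ and the induction hypothesis evaluated both at $x$ and at the point $1$ (your ``collapsed'' pieces are exactly the paper's terms $h^N(1)$, $g^{a_2}(1)$). The only difference is cosmetic: the paper wraps a new innermost variable around the identity for $(n_1,n_2)$ to reach $(n_1-1,n_2)$, whereas you strip the innermost variable off to reduce $(n_1,n_2)$ to $(n_1+1,n_2)$ — the same telescoping read in the opposite direction.
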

\begin{proof}
First, when $n_2=n_1+1$, the equality holds.
Now suppose this holds for fixed $n_1$ and $n_2$, we show this holds for $n_2$ and $n_1-1$ as well.  
Then, it suffices to show that 

\begin{multline*}
  \frac{1}{2\pi i}\oint \frac{E^\omega(u_{n_1})\varphi_{t^{2n_1-1}_{c(2n_1-1)}-t^{N}_{0}}(u_{n_1})}{(u_{n_1}-1)(u_{n_1}-x)}du_{n_1}\cdot \Phi_{n_2-n_1}^{ t^{2n_2-1/2+a_2}_b,N}   \\
    +\sum_{k=n_1+1}^{n_2}\oint \frac{h^N_{n_1,k}(u_{n_1})\varphi_{n_1,\pm}(u_{n_1})}{(u_{n_1}-1)(u_{n_1}-x)}du_{n_1}\cdot \Phi_{n_2-k}^{ t^{2n_2-1/2+a_2}_b,N} \\  
     =\frac{1}{2\pi i}\oint \frac{f^N_{t^{2n_2-1/2+a_2}_{b_2}}(u)}{(u-1)^{n_2-n_1+1}(x-u)}duE^\omega(x)\varphi_{t^{2n_1-1}_{c(2n_1-1)}-t^{N}_{0}}(x)+\frac{1}{2\pi i}\oint\frac{g^{a_2}_{n_1+1,n_2}(u_{n_1})\varphi_{n_1}(u_{n_1})}{(u_{n_1}-1)(u_{n_1}-x)}du_{n_1}.  
\end{multline*}

By induction assumption,
\begin{multline*}
     \sum_{k=n_1+1}^{n_2}\left(\oint \frac{h^N_{n_1,k}(u_{n_1})\varphi_{n_1,\pm}(u_{n_1})}{(u_{n_1}-1)(u_{n_1}-x)}du_{n_1}\cdot
   \Phi_{n_2-k}^{ t^{2n_2-1/2+a_2}_b,N}  \right)  -\frac{1}{2\pi i}\oint\frac{g^{a_2}_{n_1+1,n_2}(u_{n_1})\varphi_{n_1,\pm}(u_{n_1})}{(u_{n_1}-1)(u_{n_1}-x)}du_{n_1}  \\
    =\frac{\varphi_{n_1,\pm}(1)(-g^{a_2}_{n_1+1,n_2}(1)+\sum_{k=n_1+1}^{n_2}h^N_{n_1,k}(1) \Phi_{n_2-k}^{ t^{2n_2-1/2+a_2}_b,N})}{1-x}\\
    -\frac{\varphi_{n_1,\pm}(x)(-g^{a_2}_{n_1+1,n_2}(x)+\sum_{k=n_1+1}^{n_2}h^N_{n_1,k}(x) \Phi_{n_2-k}^{ t^{2n_2-1/2+a_2}_b,N})}{1-x}\\
    =-\frac{1}{1-x}\cdot\frac{1}{2\pi i}\oint \frac{f^N_{t^{2n_2-1/2+a_2}_{b_2}}(u)}{(u-1)^{n_2-n_1+1}}duE^\omega(1)\varphi_{t^{2n_1-1}_{c(2n_1-1)}-t^{N}_{0}}(1)\\
    -\frac{1}{1-x}\cdot\frac{1}{2\pi i}\oint \frac{f^N_{t^{2n_2-1/2+a_2}_{b_2}}(u)}{(u-1)^{n_2-n_1}(x-u)}duE^\omega(x)\varphi_{t^{2n_1-1}_{c(2n_1-1)}-t^{N}_{0}}(x),  
\end{multline*}
while
\begin{multline*}
      \frac{1}{2\pi i}\oint \frac{E^\omega(u_{n_1})\varphi_{t^{2n_1-1}_{c(2n_1-1)}-t^{N}_{0}}(u_{n_1})}{(u_{n_1}-1)(u_{n_1}-x)}du_{n_1}\cdot \Phi_{n_2-n_1}^{ t^{2n_2-1/2+a_2}_b,N}\\
      -\frac{1}{2\pi i}\oint \frac{f^N_{t^{2n_2-1/2+a_2}_{b_2}}(u)}{(u-1)^{n_2-n_1+1}(x-u)}duE^\omega(x)\varphi_{t^{2n_1-1}_{c(2n_1-1)}-t^{N}_{0}}(x)\\
      =\frac{E^\omega(1)\varphi_{t^{2n_1-1}_{c(2n_1-1)}-t^{N}_{0}}(1)-E^\omega(x)\varphi_{t^{2n_1-1}_{c(2n_1-1)}-t^{N}_{0}}(x)}{1-x}\cdot\frac{1}{2\pi i}\oint \frac{f^N_{t^{2n_2-1/2+a_2}_{b_2}}(u)}{(u-1)^{n_2-n_1+1}}du\\
    -  \frac{1}{2\pi i}\oint \frac{f^N_{t^{2n_2-1/2+a_2}_{b_2}}(u)}{(u-1)^{n_2-n_1+1}(x-u)}duE^\omega(x)\varphi_{t^{2n_1-1}_{c(2n_1-1)}-t^{N}_{0}}(x)\\
    =\frac{1}{1-x}\cdot \frac{1}{2\pi i}\oint \frac{f^N_{t^{2n_2-1/2+a_2}_{b_2}}(u)}{(u-1)^{n_2-n_1+1}}duE^\omega(1)\varphi_{t^{2n_1-1}_{c(2n_1-1)}-t^{N}_{0}}(1)\\
    +\frac{1}{1-x}\cdot\frac{1}{2\pi i}\oint \frac{ f^N_{t^{2n_2-1/2+a_2}_{b_2}}(u)}{(u-1)^{n_2-n_1}(x-u)}duE^\omega(x)\varphi_{t^{2n_1-1}_{c(2n_1-1)}-t^{N}_{0}}(x).
\end{multline*}
\end{proof}

\begin{Proposition}\label{Prop: 2}
      
If $n_1\ge n_2\ge 1$ and     and $s_1,s_2\in \mathbb{Z}_{\ge 0}$, we have 
 \begin{multline*}
 \sum_{k=1}^{n_2}\psi_{n_1-k}^{t^{2n_1-1/2+a_1}_{b_1},N}(s_1)\Phi_{n_2-k}^{ t^{2n_2-1/2+a_2}_{b_2},N}(s_2)= \\
 \frac{W^{\left(a_1,-\frac{1}{2}\right)}(s_1)}{\pi}\frac{1}{2\pi i}\int_{-1}^1\oint\mathcal{J}_{s_1}^{\left(a_1,-\frac{1}{2}\right)}(y){J}_{s_2}^{\left(a_2,-\frac{1}{2}\right)}(u)
  \frac{E^\omega(y)}{E^\omega(u)}
  \frac{
  \varphi_{t^{2n_1-1/2+a_1}_{b_1}-t^{N}_{0}}(y)}{\varphi_{t^{2n_2-1/2+a_2}_{b_2}-t^{N}_{0}}(u)}\times\\
  \frac{(y-1)^{n_1}}{(u-1)^{n_2}}
  \frac{(1-y)^{a_1}(1+y)^{-\frac{1}{2}}}{y-u} dudy\\
    +\frac{W^{\left(a_1,-\frac{1}{2}\right)}(s_1)}{\pi}\int_{-1}^1\mathcal{J}_{s_1}^{\left(a_1,-\frac{1}{2}\right)}(y){J}_{s_2}^{\left(a_2,-\frac{1}{2}\right)}(y)\frac{
  \varphi_{t^{2n_1-1/2+a_1}_{b_1}-t^{N}_{0}}(y)}{\varphi_{t^{2n_2-1/2+a_2}_{b_2}-t^{N}_{0}}(y)}
  (y-1)^{n_1-n_2}
  (1-y)^{a_1}(1+y)^{-\frac{1}{2}}  dy.
 \end{multline*}

If $1\le n_1<n_2$,
 \begin{multline*}
 \sum_{k=1}^{n_2}\psi_{n_1-k}^{t^{2n_1-1/2+a_1}_{b_1},N}(s_1)\Phi_{n_2-k}^{ t^{2n_2-1/2+a_2}_{b_2},N}(s_2)= \\
  \frac{W^{\left(a_1,-\frac{1}{2}\right)}(s_1)}{\pi}\frac{1}{2\pi i}\int_{-1}^1\oint\mathcal{J}_{s_1}^{\left(a_1,-\frac{1}{2}\right)}(y)\mathcal{J}_{s_2}^{\left(a_2,-\frac{1}{2}\right)}(u)
 \frac{E^\omega(y)}{E^\omega(u)}\frac{
  \varphi_{t^{2n_1-1/2+a_1}_{b_1}-t^{N}_{0}}(y)}{\varphi_{t^{2n_2-1/2+a_2}_{b_2}-t^{N}_{0}}(u)}
 \frac{(y-1)^{n_1}}{(u-1)^{n_2}}
  \frac{(1-y)^{a_1}(1+y)^{-\frac{1}{2}}}{y-u} dudy\\
  +\phi^{t^{2n_2-1/2+a_2}_{b_2},t^{2n_1-1/2+a_1}_{b_1}}(s_1,s_2).
 \end{multline*}
\end{Proposition}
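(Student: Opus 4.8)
The plan is to substitute the closed forms of $\psi_{n_1-k}^{t^{2n_1-1/2+a_1}_{b_1},N}(s_1)$ and $\Phi_{n_2-k}^{t^{2n_2-1/2+a_2}_{b_2},N}(s_2)$ obtained above, interchange the finite sum over $k$ with the $(y,u)$-integrations, and evaluate the resulting elementary sum built from the Cauchy kernel $(y-u)^{-1}$ and the local factors $(y-1)^{-1}$, $(u-1)^{-1}$.

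First I would treat the case $n_1\ge n_2$. Since $1\le k\le n_2\le n_1$, the lower index never exceeds the upper one, so each $\psi_{n_1-k}$ is given by the first (single-integral) formula above, with $y$-weight $E^\omega(y)\,\varphi_{t^{2n_1-1/2+a_1}_{b_1}-t^{N}_0}(y)\,(y-1)^{n_1-k}(1-y)^{a_1}(1+y)^{-1/2}$, while $\Phi_{n_2-k}=\frac{1}{2\pi i}\oint_C\frac{J_{s_2}^{(a_2,-1/2)}(u)}{E^\omega(u)\,\varphi_{t^{2n_2-1/2+a_2}_{b_2}-t^{N}_0}(u)}\,\frac{du}{(u-1)^{n_2-k+1}}$. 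After Fubini the only $k$-dependent factor is the finite geometric sum
\begin{equation*}
\sum_{k=1}^{n_2}\frac{(y-1)^{n_1-k}}{(u-1)^{n_2-k+1}}=\frac{(y-1)^{n_1}}{(u-1)^{n_2}}\cdot\frac{1}{y-u}+\frac{(y-1)^{n_1-n_2}}{u-y}.
\end{equation*}
The first summand still carries the order-$n_2$ pole at $u=1$ and I would keep it as a contour integral; restoring all prefactors it becomes the first (double-integral) term of the claim. The second summand, once multiplied by $\frac{J_{s_2}(u)}{E^\omega(u)\varphi(u)}$, has only the simple pole $u=y$ inside $C$ (recall $C$ encircles $[-1,1]$ but no zero of $E^\omega$, and $1/\varphi$ is entire), so the residue theorem collapses its $u$-integral; after cancelling $E^\omega(y)$ this is exactly the second (single-integral) term of the claim.

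For $1\le n_1<n_2$ I would split $\sum_{k=1}^{n_2}=\sum_{k=1}^{n_1}+\sum_{k=n_1+1}^{n_2}$. In the head sum ($k\le n_1$) the same computation, now truncated at $n_1$, replaces the bracket above by $\frac{1}{(u-1)^{n_2-n_1}(u-y)}-\frac{(y-1)^{n_1}}{(u-1)^{n_2}(u-y)}$. In the tail sum ($k>n_1$) the negative-index form of $\psi_{n_1-k}$ reads $\frac{W^{(a_1,-1/2)}(s_1)}{\pi}\int_{-1}^1\mathcal{J}_{s_1}^{(a_1,-1/2)}(x)\,h^N_{n_1+1,k}(x)\,\varphi_{t^{2n_1-1/2+a_1}_{b_1}-t^{2n_1}_0}(x)(1-x)^{a_1}(1+x)^{-1/2}dx$ in the notation preceding Lemma \ref{Lemma: 3}; moving the scalars $\Phi_{n_2-k}$ inside the $x$-integral and applying Lemma \ref{Lemma: 3} rewrites the tail as the $x$-integral, against the remaining weight, of $\bigl(\frac{1}{2\pi i}\oint_C\frac{f^N_{t^{2n_2-1/2+a_2}_{b_2}}(u)}{(u-1)^{n_2-n_1}(x-u)}du\bigr)E^\omega(x)\varphi_{t^{2n_1+1}_{c(2n_1+1)}-t^{N}_0}(x)+g^{a_2}_{n_1+1,n_2}(x)$. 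By Proposition \ref{Prop: 3} the $g^{a_2}_{n_1+1,n_2}$ piece is $\phi^{t^{2n_2-1/2+a_2}_{b_2},\,t^{2n_1-1/2+a_1}_{b_1}}(s_1,s_2)$. Using the level identities $t^{2m}_{c(2m)}=t^{2m-1}_0$ and $t^{2m+1}_{c(2m+1)}=t^{2m}_0$, the two $\varphi$-factors in front of the remaining contour piece fuse to $\varphi_{t^{2n_1-1/2+a_1}_{b_1}-t^{N}_0}$, which makes that piece equal to minus the $\frac{1}{(u-1)^{n_2-n_1}(u-y)}$ part of the head sum; these cancel. What survives is $+\frac{(y-1)^{n_1}}{(u-1)^{n_2}(y-u)}$ with its prefactors (the first term of the claim) together with $\phi^{t^{2n_2-1/2+a_2}_{b_2},t^{2n_1-1/2+a_1}_{b_1}}$, as asserted.

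The hard part will be the time/contour bookkeeping: verifying that the $\varphi$-products telescope correctly through the ordering of the $t^{\bullet}_{\bullet}$, that the prefactors of the two ``spurious'' contour terms (one from the head sum, one from the tail) agree exactly so that their cancellation is genuine rather than formal, and that at each residue step the contour $C$ truly separates the pole $u=y\in[-1,1]$ from the zeros of $E^\omega$. With these verified, the two stated formulas are read off from the surviving terms.
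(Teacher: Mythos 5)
Your proposal is correct and follows essentially the same route as the paper: explicit substitution and the finite geometric sum for $k\le\min(n_1,n_2)$ (which the paper delegates to Proposition 4.6 of \cite{borodin2011random}), then for $n_1<n_2$ the split at $k=n_1$, Lemma \ref{Lemma: 3} for the tail, identification of the $g^{a_2}_{n_1+1,n_2}$ piece with $\phi^{t^{2n_2-1/2+a_2}_{b_2},t^{2n_1-1/2+a_1}_{b_1}}$ via Proposition \ref{Prop: 3}, and cancellation of the two spurious contour terms after fusing the $\varphi$-factors via $t^{m}_{c(m)}=t^{m-1}_0$. Your bookkeeping (the geometric-sum identity, the residue at $u=y$, and the cancellation of prefactors) checks out, so no gap remains beyond what the paper itself assumes about the contour avoiding the zeros of $E^\omega$.
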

\begin{proof}
The calculation for \begin{equation*}
    \sum_{k=1}^{\text{min}\{n_1,n_2\}}\psi_{n_1-k}^{t^{2n_1-1/2+a_1}_{b_1},N}(s_1)\Phi_{n_2-k}^{ t^{2n_2-1/2+a_2}_{b_2},N}(s_2)
\end{equation*} follows the same arguments as the proof for Proposition 4.6 in \cite{borodin2011random}. We only show the proof when $1\le n_1<n_2$.

First, 
\begin{multline*}
  \sum_{k=1}^{n_1}\psi_{n_1-k}^{t^{2n_1-1/2+a_1}_{b_1},N}(s_1)\Phi_{n_2-k}^{ t^{2n_2-1/2+a_2}_{b_2},N}(s_2)= \\
  \frac{W^{\left(a_1,-\frac{1}{2}\right)}(s_1)}{\pi}\frac{1}{2\pi i}\int_{-1}^1\oint\mathcal{J}_{s_1}^{\left(a_1,-\frac{1}{2}\right)}(y)\mathcal{J}_{s_2}^{\left(a_2,-\frac{1}{2}\right)}(u)\frac{E^\omega(y)}{E^\omega(u)}
  \frac{
  \varphi_{t^{2n_1-1/2+a_1}_{b_1}-t^{N}_{0}}(y)}{\varphi_{t^{2n_2-1/2+a_2}_{b_2}-t^{N}_{0}}(u)}\times\\
  \frac{(y-1)^{n_1}}{(u-1)^{n_2}}\left(1-\left(\frac{u-1}{y-1}\right)^{n_1}\right)
  \frac{(1-y)^{a_1}(1+y)^{-\frac{1}{2}}}{y-u} dudy.
\end{multline*}

Now we only need to find $ \sum_{k=n_1+1}^{n_2}\psi_{n_1-k}^{t^{2n_1-1/2+a_1}_{b_1},N}(s_1)\Phi_{n_2-k}^{ t^{2n_2-1/2+a_2}_{b_2},N}(s_2)$.
By Lemma \ref{Lemma: 3},
\begin{multline*}
  \sum_{k=n_1+1}^{n_2}\psi_{n_1-k}^{t^{2n_1-1/2+a_1}_{b_1},N}(s_1)\Phi_{n_2-k}^{ t^{2n_2-1/2+a_2}_{b_2},N}(s_2)\\
   =\frac{W^{\left(a_1,-\frac{1}{2}\right)}(s_1)}{\pi}\int_{-1}^1\mathcal{J}_{s_1}^{\left(a_1,-\frac{1}{2}\right)}(x)\left(\sum_{k=n_1+1}^{n_2}h^N_{n_1+1,k}(x)*\Phi_{n_2-k}^{ t^{2n_2-1/2+a_2}_b,N}\right)\varphi_{t^{2n_1-1/2+a_1}_b-t^{2n_1}_{0}}(x)  (1-x)^{a_1}(1+x)^{-1/2}  dx  \\
  = \frac{W^{\left(a_1,-\frac{1}{2}\right)}(s_1)}{\pi(2\pi i)} \int_{-1}^1\oint \frac{\mathcal{J}_{s_1}^{\left(a_1,-\frac{1}{2}\right)}(x) f^N_{t^{2n_2-1/2+a_2}_{b_2}}(u)}{(u-1)^{n_2-n_1}(x-u)}E^\omega(x)\varphi_{t^{2n_1-1/2+a_1}_{b_1}-t^{N}_{0}}(x)(1-x)^{a_1}(1+x)^{-1/2}  dudx \\
   +\frac{W^{\left(a_1,-\frac{1}{2}\right)}(s_1)}{\pi}\int_{-1}^1\mathcal{J}_{s}^{\left(a_1,-\frac{1}{2}\right)}(x)g^{a_2}_{n_1+1,n_2}(x)\varphi_{t^{2n_1-1/2+a_1}_{b_1}-t^{2n_1}_{0}}(x)  (1-x)^{a_1}(1+x)^{-1/2}  dx. \end{multline*}
   Note that 
   \begin{multline*}
       \frac{W^{\left(a_1,-\frac{1}{2}\right)}(s_1)}{\pi}\int_{-1}^1\mathcal{J}_{s_1}^{\left(a_1,-\frac{1}{2}\right)}(x)g^{a_2}_{n_1+1,n_2}(x)\varphi_{t^{2n_1-1/2+a_1}_{b_1}-t^{2n_1}_{0}}(x)  (1-x)^{a_1}(1+x)^{-1/2}  dx\\
       =\phi^{t^{2n_2-1/2+a_2}_{b_2},t^{2n_1-1/2+a_1}_{b_1}}(s_1,s_2),
   \end{multline*}
   Thus,
   \begin{multline*}
   \sum_{k=n_1+1}^{n_2}\psi_{n_1-k}^{t^{2n_1-1/2+a_1}_{b_1},N}(s_1)\Phi_{n_2-k}^{ t^{2n_2-1/2+a_2}_{b_2},N}(s_2)   \\
   = \frac{W^{\left(a_1,-\frac{1}{2}\right)}(s_1)}{\pi(2\pi i)} \int_{-1}^1\oint \frac{\mathcal{J}_{s_1}^{\left(a_1,-\frac{1}{2}\right)}(x) f^N_{t^{2n_2-1/2+a_2}_{b_2}}(u)}{(u-1)^{n_2-n_1}(x-u)}E^\omega(x)\varphi_{t^{2n_1-1/2+a_1}_{b_1}-t^{N}_{0}}(x)(1-x)^{a_1}(1+x)^{-1/2}  dudx \\
    +\phi^{t^{2n_2-1/2+a_2}_{b_2},t^{2n_1-1/2+a_1}_{b_1}}(s_1,s_2).   
   \end{multline*}

 Adding the above two summations together, we get
 \begin{multline*}
 \sum_{k=1}^{n_2}\psi_{n_1-k}^{t^{2n_1-1/2+a_1}_{b_1},N}(s_1)\Phi_{n_2-k}^{ t^{2n_2-1/2+a_2}_{b_2},N}(s_2)= \\
  \frac{W^{\left(a_1,-\frac{1}{2}\right)}(s_1)}{\pi}\frac{1}{2\pi i}\int_{-1}^1\oint\mathcal{J}_{s_1}^{\left(a_1,-\frac{1}{2}\right)}(y)\mathcal{J}_{s_2}^{\left(a_2,-\frac{1}{2}\right)}(u)
 \frac{E^\omega(y)}{E^\omega(u)}\frac{
  \varphi_{t^{2n_1-1/2+a_1}_{b_1}-t^{N}_{0}}(y)}{\varphi_{t^{2n_2-1/2+a_2}_{b_2}-t^{N}_{0}}(u)}
 \frac{(y-1)^{n_1}}{(u-1)^{n_2}}
  \frac{(1-y)^{a_1}(1+y)^{-\frac{1}{2}}}{y-u} dudy\\
  +\phi^{t^{2n_2-1/2+a_2}_{b_2},t^{2n_1-1/2+a_1}_{b_1}}(s_1,s_2).
 \end{multline*}

\end{proof}

\subsection{Computing the kernel}
In this section, we apply Lemma \ref{Lemma: 1}  to derive the correlation kernel $K$. Adding Proposition \ref{Prop: 3} and Proposition \ref{Prop: 2}, we get the following:

When $(n_1,a_1,t_1)\succ (n_2,a_2,t_2)$, which means  $t_1\le t_2$,  $2n_1-1/2+a_1\ge  2n_2-1/2+a_2$ and $(n_1,a_1,t_1)\neq (n_2,a_2,t_2)$. Let $t^{2n_i-1/2+a_i}_{b_i}=t_i$, $i=1,2$. By the fact that $ \phi^{t_2,t_1}=0$ when $t_1\le t_2$, we have 
\begin{multline*}
  K(n_1,a_1,t_1,s_1;n_2,a_2,t_2,s_2)= \sum_{k=1}^{n_2}\psi_{n_1-k}^{t^{2n_1-1/2+a_1}_{b_1},N}(s_1)\Phi_{n_2-k}^{ t^{2n_2-1/2+a_2}_{b_2},N}(s_2) \\ =\frac{W^{\left(a_1,-\frac{1}{2}\right)}(s_1)}{\pi}\frac{1}{2\pi i}\int_{-1}^1\oint\mathcal{J}_{s_1}^{\left(a_1,-\frac{1}{2}\right)}(y){J}_{s_2}^{\left(a_2,-\frac{1}{2}\right)}(u)
 \frac{E^\omega(y)}{E^\omega(u)} \frac{e^{t_1(y-1)}}{e^{t_2(u-1)}}
  \frac{(y-1)^{n_1}}{(u-1)^{n_2}}
  \frac{(1-y)^{a_1}(1+y)^{-\frac{1}{2}}}{y-u} dudy\\
    +\frac{W^{\left(a_1,-\frac{1}{2}\right)}(s_1)}{\pi}\int_{-1}^1\mathcal{J}_{s_1}^{\left(a_1,-\frac{1}{2}\right)}(y){J}_{s_2}^{\left(a_2,-\frac{1}{2}\right)}(y)
  \frac{e^{t_1(y-1)}}{e^{t_2(y-1)}}
  (y-1)^{n_1-n_2}
  (1-y)^{a_1}(1+y)^{-\frac{1}{2}}  dy.
\end{multline*}
When $(n_1,a_1,t_1)\nsucc (n_2,a_2,t_2)$, which means $2n_1-1/2+a_1\le  2n_2-1/2+a_2$ and $t_1> t_2$,
\begin{multline*}
    K(n_1,a_1,t_1,s_1;n_2,a_2,t_2,s_2)=-\phi^{t^{2n_2-1/2+a_2}_{b_2},t^{2n_1-1/2+a_1}_{b_1}}(s_1,s_2)+\sum_{k=1}^{n_2}\psi_{n_1-k}^{t^{2n_1-1/2+a_1}_{b_1},N}(s_1)\Phi_{n_2-k}^{ t^{2n_2-1/2+a_2}_{b_2},N}(s_2)\\ =\frac{W^{\left(a_1,-\frac{1}{2}\right)}(s_1)}{\pi}\frac{1}{2\pi i}\int_{-1}^1\oint\mathcal{J}_{s_1}^{\left(a_1,-\frac{1}{2}\right)}(y)\mathcal{J}_{s_2}^{\left(a_2,-\frac{1}{2}\right)}(u)
 \frac{E^\omega(y)}{E^\omega(u)} \frac{e^{t_1(y-1)}}{e^{t_2(u-1)}}
  \frac{(y-1)^{n_1}}{(u-1)^{n_2}}
  \frac{(1-y)^{a_1}(1+y)^{-\frac{1}{2}}}{y-u} dudy.  
\end{multline*}

  \section{Non-commutative random walk on $U(\mathfrak{so}_{N+1})$}

In this section, we construct a non-commutative random walk on $U(\mathfrak{so}_{N+1})$, which is analogues to the non-commutative random walk on $U(\mathfrak{gl}_N)$ constructed in \cite{kuan2014threedimensional}.

We take the universal enveloping algebra of the Lie group $\mathfrak{so}_{N+1}$ as the state space and define a semi-group of the non-commutative Markov operator $\{P_t\}_{t\ge 0}$ on $U(\mathfrak{so}_{N+1})$.

For each class function $\kappa\in L^2(SO(N+1))$, we can define a state $\langle\cdot\rangle_{\kappa}$ on 
$U(\mathfrak{so}_{N+1})$ by $\langle X\rangle_{\kappa}=D(X)\kappa(U)|_{U=I}$, where $D$ is the canonical isomorphism from $U(\mathfrak{so}_{N+1})$ to the  algebra
of left–invariant differential operators on $SO(N+1)$ with complex coefficients (see e.g. \cite{DPZ}).

It is not hard to see (see e.g. \cite{kuan2014threedimensional}) that  if the class function $\kappa$ decomposes as 
\begin{equation*}
  \kappa=\sum_{\lambda}\hat{\kappa}(\lambda)\frac{\chi^\lambda}{\text{dim}\lambda},  
\end{equation*}
where $\lambda$ ranges over all irreducible representations of $SO(N+1)$ and $\chi^\lambda$ are the corresponding characters, then
\begin{equation}\label{eq: 9}
   \langle X\rangle_{\kappa}=\sum_\lambda\hat{\kappa}(\lambda)\sum_{i=1}^{\text{dim }\lambda}\text{Tr}\left(X|_{V_{\lambda}^i}\right). 
\end{equation}
In what follows, we let  $\kappa_t(O)=e^{t\text{Tr}(O-Id)}$  and  write $\langle\cdot\rangle_{t}$ for $\langle\cdot\rangle_{\kappa_t}$.

If $X=F_{i_1j_1}\cdots F_{i_kj_k}$, then the state can be computed with the following formula (see e.g. page 101 of \cite{V-harmonic}):
\begin{equation}\label{eq: 2.1}
  D(X)\kappa(O)=\partial_{t_1}\cdots\partial_{t_k}\kappa(Oe^{t_1F_{i_1j_1}}\cdots e^{t_k F_{i_kj_k}})|_{t_1=\cdots=t_k=0},
\end{equation}
where $e^{tF}$ is the usual exponential of matrices. In particular, we have 
\begin{equation*}
    e^{tF_{ij}}=\left\{
             \begin{array}{lr}
             Id+tF_{ij}, &  i\neq \pm j,\  i,j\neq 0, \\ 
              Id+tF_{ij}-\frac{t^2}{2}E_{-j,j},  & i=0,               \\
              Id+tF_{ij}-\frac{t^2}{2}E_{i,-i},  & j=0,               \\
           Id+(e^t-1)E_{ii}+(e^{-t}-1)E_{-i,-i},  &  i=j.
             \end{array}
\right.
\end{equation*}
Since \eqref{eq: 2.1} only involves linear terms in $t_j$, we can replace $e^{tF_{ij}}$ with $Id+tF_{ij}$. Applying Faa di Bruno formula (see e.g. \cite{kuan2014threedimensional}), we have
\begin{equation*}
    \langle F_{i_1,j_1}\cdots F_{i_mj_m}\rangle_t=\sum_{\pi\in \Pi} t^{|\pi|}\prod_{B\in \pi, B=\{b_1,\cdots,b_k\}}  \text{Tr}\left(\prod_{b\in B, B=\{b_1,\cdots,b_k\}}F_{i_bj_b}\right),
\end{equation*}
where $\Pi$ is the set of partitions of the set $\{1,2,\ldots,m\}$ and $B\in \pi$ means that $B$ is a block in partition $\pi$.

It is not hard to see that the non-commutative Markov operator $P_t$ defined in \cite{kuan2014threedimensional,Kuan-2+1} also defines a Markov operator on $U(\mathfrak{so}_{N+1})$.
\begin{Theorem}(Theorem 3.1 in \cite{kuan2014threedimensional})\label{Thm: 1}
Define  $P_t=(id\otimes \langle\cdot\rangle_{\kappa_t})\circ\Delta$, then 
\begin{enumerate}[{(1)}]
    \item $P_t$ satisfies the semi-group property $P_{t+s}=P_t\circ P_s$.
    \item $P_t$ preserves $Z(U(\mathfrak{so}_{N+1}))$, i.e. $P_t Z(U(\mathfrak{so}_{N+1}))\subset Z(U(\mathfrak{so}_{N+1}))$.
    \item For all $ Y\in Z(U(\mathfrak{so}_{N+1}))$,  $\langle P_tY\rangle_s=\langle Y\rangle_{s+t}$.
\end{enumerate}
\end{Theorem}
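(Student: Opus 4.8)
\emph{Proof plan.} The plan is to derive all three assertions from formal identities in the Hopf algebra $U(\mathfrak{so}_{N+1})$ (coassociativity of $\Delta$ and the fact that $\Delta$ is an algebra homomorphism), the only non-formal input being a single convolution identity for the states. Concretely, the first thing I would establish is the lemma: for any two smooth functions $\kappa_1,\kappa_2$ on $SO(N+1)$ and any $X\in U(\mathfrak{so}_{N+1})$,
\begin{equation*}
  \langle X\rangle_{\kappa_1\kappa_2}=\bigl(\langle\cdot\rangle_{\kappa_1}\otimes\langle\cdot\rangle_{\kappa_2}\bigr)(\Delta X),
\end{equation*}
where $\kappa_1\kappa_2$ is the pointwise product of functions. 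To prove it I would take $X=F_{i_1j_1}\cdots F_{i_kj_k}$ and apply \eqref{eq: 2.1}: the iterated derivative $\partial_{t_1}\cdots\partial_{t_k}$ of $\kappa_1(Og(t))\,\kappa_2(Og(t))$, with $g(t)=e^{t_1F_{i_1j_1}}\cdots e^{t_kF_{i_kj_k}}$, expands by the Leibniz rule into a sum over subsets $S\subseteq\{1,\dots,k\}$; in the $\kappa_1$-factor only the derivatives $\partial_{t_a}$ with $a\in S$ act, so the variables $t_a$ with $a\notin S$ may be set to $0$ there (deleting the corresponding exponentials), and symmetrically in the $\kappa_2$-factor the variables $t_a$ with $a\in S$ are set to $0$. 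Evaluating at $O=I$ therefore produces exactly
\begin{equation*}
  \sum_{S\subseteq\{1,\dots,k\}}\langle F_{i_{a_1}j_{a_1}}\cdots F_{i_{a_r}j_{a_r}}\rangle_{\kappa_1}\,\langle F_{i_{b_1}j_{b_1}}\cdots F_{i_{b_s}j_{b_s}}\rangle_{\kappa_2},
\end{equation*}
where $a_1<\dots<a_r$ enumerate $S$ and $b_1<\dots<b_s$ enumerate its complement, which is precisely $(\langle\cdot\rangle_{\kappa_1}\otimes\langle\cdot\rangle_{\kappa_2})(\Delta X)$ since $\Delta(F_{i_1j_1}\cdots F_{i_kj_k})=\prod_{a=1}^k(F_{i_aj_a}\otimes 1+1\otimes F_{i_aj_a})$. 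This is the same bookkeeping behind the Fa\`a di Bruno expansion of $\langle F_{i_1j_1}\cdots F_{i_mj_m}\rangle_t$ recorded in the text.

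Granting the lemma, parts (1) and (3) are formal. From $\kappa_t(O)=e^{t\,\text{Tr}(O-Id)}$ one reads off $\kappa_s\kappa_t=\kappa_{s+t}$, so the lemma yields $\langle\cdot\rangle_{s+t}=(\langle\cdot\rangle_s\otimes\langle\cdot\rangle_t)\circ\Delta$. For (3), for \emph{any} $Y\in U(\mathfrak{so}_{N+1})$,
\begin{equation*}
  \langle P_tY\rangle_s=\bigl(\langle\cdot\rangle_s\otimes\langle\cdot\rangle_t\bigr)(\Delta Y)=\langle Y\rangle_{\kappa_s\kappa_t}=\langle Y\rangle_{s+t},
\end{equation*}
so centrality of $Y$ is not even needed here. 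For (1), the lemma together with coassociativity of $\Delta$ gives
\begin{equation*}
  P_{t+s}=(id\otimes\langle\cdot\rangle_t\otimes\langle\cdot\rangle_s)\circ(id\otimes\Delta)\circ\Delta=(id\otimes\langle\cdot\rangle_t\otimes\langle\cdot\rangle_s)\circ(\Delta\otimes id)\circ\Delta,
\end{equation*}
and the identity $\Delta\circ(id\otimes\langle\cdot\rangle_s)=(id\otimes id\otimes\langle\cdot\rangle_s)\circ(\Delta\otimes id)$ (valid because $\langle\cdot\rangle_s$ acts on a tensor slot disjoint from $\Delta$) shows that $P_t\circ P_s$ equals the same triple composite; hence $P_{t+s}=P_t\circ P_s$.

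For (2) the extra ingredient is that $\langle\cdot\rangle_t$ is a trace, i.e.\ $\langle AB\rangle_t=\langle BA\rangle_t$ for all $A,B\in U(\mathfrak{so}_{N+1})$. This holds because $\kappa_t$ is a class function ($\text{Tr}$ is conjugation invariant), so by \eqref{eq: 9} $\langle X\rangle_t=\sum_\lambda\hat\kappa_t(\lambda)\sum_i\text{Tr}(X|_{V_\lambda^i})$, and each trace on a finite-dimensional $V_\lambda^i$ annihilates commutators. Then for $Y\in Z(U(\mathfrak{so}_{N+1}))$ it suffices, since $\mathfrak{so}_{N+1}$ generates $U(\mathfrak{so}_{N+1})$, to check $[Z,P_tY]=0$ for every generator $Z$. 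Writing $\Delta Y=\sum Y_{(1)}\otimes Y_{(2)}$ and using $\Delta([Z,Y])=[\Delta Z,\Delta Y]$ with $\Delta Z=Z\otimes 1+1\otimes Z$, centrality $[Z,Y]=0$ gives $\sum\bigl([Z,Y_{(1)}]\otimes Y_{(2)}+Y_{(1)}\otimes[Z,Y_{(2)}]\bigr)=0$; applying $id\otimes\langle\cdot\rangle_t$ and using $\langle[Z,Y_{(2)}]\rangle_t=0$ leaves $[Z,P_tY]=\sum[Z,Y_{(1)}]\langle Y_{(2)}\rangle_t=0$, so $P_tY$ is central. The only step requiring genuine care is the convolution lemma — keeping the non-commuting factors $F_{i_aj_a}$ in the order dictated by $\Delta$ while distributing the iterated derivative over $\kappa_1\kappa_2$ — but this is routine once one recognizes that the coproduct of $U(\mathfrak{so}_{N+1})$ is exactly the combinatorial device encoding the Leibniz rule for left-invariant differential operators, which is what makes $(id\otimes\langle\cdot\rangle_t)\circ\Delta$ behave like a Markov semigroup in the first place.
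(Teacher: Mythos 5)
Your proposal is correct, and it is essentially the argument behind the cited result: the paper itself supplies no proof (it simply quotes Theorem 3.1 of \cite{kuan2014threedimensional}), and the standard proof there rests on exactly the two ingredients you isolate, namely the convolution identity $\langle\cdot\rangle_{\kappa_1\kappa_2}=(\langle\cdot\rangle_{\kappa_1}\otimes\langle\cdot\rangle_{\kappa_2})\circ\Delta$ (with $\kappa_s\kappa_t=\kappa_{s+t}$ and coassociativity giving (1) and (3)) and the traciality of the state coming from $\kappa_t$ being a class function, which via $[Z,P_tY]=\sum[Z,Y_{(1)}]\langle Y_{(2)}\rangle_t=0$ gives (2). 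Your observation that (3) holds for all $Y$, not just central ones, is a harmless strengthening.
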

Since $P_t$ preserves $Z(U(\mathfrak{so}_{N+1}))$, we can expand $ P_t\Phi_{2k}^{N+1}$ in terms of generators of $Z(U(\mathfrak{so}_{N+1}))$.
\begin{Example}\label{exp: 2}
\begin{enumerate}[{(1)}]
    \item \begin{equation*}
    P_t\Phi_2^{N+1}=\Phi_2^{N+1}+\text{constant}. 
\end{equation*}
\item \begin{equation*}
    P_t\Phi_4^{N+1}=\Phi_4^{N+1}+16tn\Phi_2^{N+1}+\text{constant} .
\end{equation*}
\end{enumerate}

\end{Example}

 If we  define $Q^{n,a}_t$ to be the Markov operator for the point process $ \left\{x_k^{n,a}|\ 1\le k\le n,\ a\in\left\{\pm 1/2\right\},\ n\ge 1\right\}$ projected onto $\mathbb{Z}_{\ge 0}\times\{n\}\times\{a\}$, then $Q^{n,a}_t$ also defines a Markov operator on $Z(U(\mathfrak{so}_{N+1}))$ through the Harish-Chandra isomorphism.
\begin{Proposition}\label{Prop: 4}
For any $Y\in Z(U(\mathfrak{so}_{N+1}))$, there exists a polynomial $  \overline{p}^{N+1}$ such that 
\begin{equation*}
   \langle Y\rangle_{\frac{t}{2}}  =\mathbb{E}\left[\overline{p}^{N+1}_Y\left(x^{n,a}_1(t),\cdots,x^{n,a}_n(t)\right)\right].
\end{equation*}
In addition, $\langle Q^{n,a}_tY\rangle_{_{\frac{s}{2}}}=\langle Y\rangle_{_{\frac{s+t}{2}}}$. 
 \end{Proposition}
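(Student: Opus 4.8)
The plan is to transport both sides through the Harish--Chandra isomorphism so that each becomes a sum of one shifted symmetric polynomial against the measure $P^{\omega_t}_{n,a}$ of \eqref{eq: 6}. First I would recognise the class function $\kappa_{t/2}$ on $SO(N+1)$ as the restriction of the extreme character $\chi^{\omega_t}$ with $\omega_t=(0,0,t)\in\Omega$, i.e.\ $E^{\omega_t}=\varphi_t$: if $O\in SO(N+1)$ has spectrum $\{z_j^{\pm1}\}_{j=1}^n$ (plus a trailing eigenvalue $1$ when $N+1$ is odd), then $\mathrm{Tr}(O-\mathrm{Id})=\sum_{j=1}^n\!\big(z_j+z_j^{-1}-2\big)=2\sum_{j=1}^n\!\big(\tfrac{z_j+z_j^{-1}}{2}-1\big)$, so $\kappa_{t/2}(O)=\prod_{j=1}^n e^{t((z_j+z_j^{-1})/2-1)}=\chi^{\omega_t}(O)$; this is exactly where the factor $\tfrac12$ in $\langle\cdot\rangle_{t/2}$ originates. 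By \eqref{eq: 7}--\eqref{eq: 8} the coefficients in the character expansion of $\kappa_{t/2}|_{SO(N+1)}$ are the numbers $P^{\omega_t}_{n,a}(\lambda)$, and by the Harish--Chandra theorem quoted from \cite{MR2355506} every $Y\in Z(U(\mathfrak{so}_{N+1}))$ acts on $V_\lambda$ by the scalar $p_Y(\lambda_1,\dots,\lambda_n)$, a polynomial in the $l_i=\lambda_i+\rho_i$. Substituting into \eqref{eq: 9} and using that the trace of this scalar over a $\dim_{SO(N+1)}\lambda$-dimensional space is $p_Y(\lambda)\dim_{SO(N+1)}\lambda$ (which cancels the normalisation $1/\dim_{SO(N+1)}\lambda$ in the character expansion) gives $\langle Y\rangle_{t/2}=\sum_\lambda P^{\omega_t}_{n,a}(\lambda)\,\widetilde p_Y(\lambda)$, where $\widetilde p_Y=p_Y$ when $N+1$ is odd and $\widetilde p_Y(\lambda)=p_Y(\lambda)+p_Y(\lambda^*)$ when $N+1=2n$ (since \eqref{eq: 8} pairs $V_\lambda$ with $V_{\lambda^*}$ and $\dim_{SO(2n)}\lambda=\dim_{SO(2n)}\lambda^*$); in both cases $\widetilde p_Y$ is a polynomial.

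For the probabilistic side I would use that the surface growth from the densely packed configuration has law $P^{N,(0,0,0)}$, so by Theorem 3.12 of \cite{borodin2011random} together with the fact (a consequence of the orthogonality relations of Lemma \ref{Lemma: 2}, and implicit in \cite{borodin2011random}) that $A^{\varphi_t}_N$ maps $P^{N,\omega}$ to $P^{N,\omega'}$ with $E^{\omega'}=E^\omega\varphi_t$, the law at time $t$ is $P^{N,\omega_t}$. Hence the joint law of $\big(x^{n,a}_1(t),\dots,x^{n,a}_n(t)\big)$ is the image of $P^{\omega_t}_{n,a}$ under the affine bijection $\lambda\mapsto x$, $x_k=\lambda_k+n-k$. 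Defining the polynomial $\overline{p}^{N+1}_Y(x_1,\dots,x_n):=\widetilde p_Y(x_1-n+1,\,x_2-n+2,\,\dots,\,x_n)$ then yields $\mathbb E\big[\overline{p}^{N+1}_Y(x^{n,a}(t))\big]=\sum_\lambda P^{\omega_t}_{n,a}(\lambda)\,\widetilde p_Y(\lambda)=\langle Y\rangle_{t/2}$, which is the first assertion.

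For the identity $\langle Q^{n,a}_tY\rangle_{s/2}=\langle Y\rangle_{(s+t)/2}$ I would argue as follows. The level-$(n,a)$ marginal of the surface growth is itself a Markov chain, with transition matrix $T^{\varphi_t}_{n,a}$ (the content of Proposition 3.6 of \cite{borodin2011random}, and the way $Q^{n,a}_t$ is defined), and the same orthogonality computation shows $T^{\varphi_t}_{n,a}$ sends $P^\omega_{n,a}$ to $P^{\omega'}_{n,a}$ with $E^{\omega'}=E^\omega\varphi_t$, hence carries $P^{\omega_s}_{n,a}$ to $P^{\omega_{s+t}}_{n,a}$. Since $Q^{n,a}_t$ acts on $Z(U(\mathfrak{so}_{N+1}))$ through Harish--Chandra as this chain's transition operator, the first assertion (with $s$ in place of $t$) gives $\langle Q^{n,a}_tY\rangle_{s/2}=\mathbb E_{\lambda\sim P^{\omega_s}_{n,a}}\!\big[\,\mathbb E[\overline{p}^{N+1}_Y(x^{n,a}(t))\mid \lambda^{(n),a}(0)=\lambda]\,\big]$, and by the tower property and the displacement $P^{\omega_s}_{n,a}\mapsto P^{\omega_{s+t}}_{n,a}$ the right side equals $\sum_\mu P^{\omega_{s+t}}_{n,a}(\mu)\,\widetilde p_Y(\mu)=\langle Y\rangle_{(s+t)/2}$. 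Alternatively, once the non-commutative walk $P_t$ is identified with $Q^{n,a}_{2t}$ on the centre, this follows directly from Theorem \ref{Thm: 1}(3).

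The main obstacle is the two intertwining facts just invoked: the identification $\kappa_{t/2}|_{SO(N+1)}=\chi^{\omega_t}$ feeding into \eqref{eq: 7}--\eqref{eq: 8}, and the statement that $T^{\varphi_t}_{n,a}$ (equivalently $A^{\varphi_t}_N$) shifts $E^\omega$ to $E^\omega\varphi_t$; both hinge on the orthogonality relations for the normalised Jacobi polynomials $\mathcal{J}_k^{(a,-1/2)}$ and on careful bookkeeping with the two parametrisations $\mathbb{J}_{n,\pm}$. In particular, in the $SO(2n)$ case one must retain the $\chi^\lambda+\chi^{\lambda^*}$ symmetrisation of \eqref{eq: 8}, so that $\overline{p}^{N+1}_Y$ involves $p_Y(\lambda)+p_Y(\lambda^*)$ rather than $p_Y(\lambda)$ alone; this is still a polynomial after the substitution $x_k=\lambda_k+n-k$, which is all the statement requires, even when $Y$ involves the generator $\sqrt{\Lambda_{2n}^{N+1}}$ whose Harish--Chandra image $l_1\cdots l_n$ is not $*$-invariant. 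Everything else is routine: the affine substitution $x_k=\lambda_k+n-k$ and tracking the cancellation of $\dim_{SO(N+1)}\lambda$.
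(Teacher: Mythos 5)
Your argument is correct and follows essentially the same route as the paper's proof: identify $\kappa_{t/2}$ with the extreme character $\chi^{\omega_t}$ so that the coefficients in \eqref{eq: 7}--\eqref{eq: 8} are the law of $x^{n,a}(t)$ (via Theorem 3.12 of \cite{borodin2011random} and the shift $E^\omega\mapsto E^\omega\varphi_t$), push $Y$ through the Harish--Chandra isomorphism, change variables $x_k=\lambda_k+n-k$, and use the Markov property of the level-$(n,a)$ projection for the identity $\langle Q^{n,a}_tY\rangle_{s/2}=\langle Y\rangle_{(s+t)/2}$. Two minor caveats: in the even case the correct symmetrisation (the one used in the paper's proof) is the average $\tfrac12\big(p_Y(\lambda)+p_Y(\lambda^*)\big)$ rather than the sum, the factor $2\,\mathrm{dim}_{SO(2n)}\lambda$ belonging in the denominator of the character expansion so that the weights sum to one; and your ``alternative'' deduction from Theorem \ref{Thm: 1}(3) is circular, since the identification $P_{t/2}=Q^{n,a}_t$ (Theorem \ref{Thm: 6}) is itself proved from this proposition.
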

\begin{proof}
Suppose $Y\in Z(U(\mathfrak{so}_{N+1}))$ is sent to the polynomial $p_Y^{N+1}$ by the Harish-Chandra isomorphism. When $N+1=2n+1$, by \eqref{eq: 7}, for any $O\in SO(N+1)$, 
\begin{equation*}
  e^{\frac{1}{2}t\text{Tr}(O-I)}=\sum_{\lambda}\text{Prob}\left(x_k^{n,\frac{1}{2}}(t)=\lambda_k+n-k, 1\le k\le n\right)\frac{\chi^\lambda_{SO(2n+1)}(O)}{\text{dim}_{SO(2n+1)} \lambda}.
\end{equation*}
Recall that  $l_k=\lambda_k-k+\frac{1}{2}$ when $N+1=2n+1$, so  we define a polynomial $\overline{p}^{N+1}_Y$  with a change of variables, i.e. $\overline{p}^{N+1}_Y\left(l_1+n-\frac{1}{2},\cdots,l_n+n-\frac{1}{2}\right)=p^{N+1}_Y\left(\lambda_1,\cdots,\lambda_n\right)$. 
Thus, by linearity and \eqref{eq: 9},
\begin{multline*}
      \langle Y\rangle_{\frac{t}{2}}=\sum_{\lambda}\text{Prob}\left(x_k^{n,\frac{1}{2}}(t)=\lambda_k+n-k\right)\frac{\langle Y\rangle_{\chi^\lambda}}{\text{dim} \lambda} \\
    =\sum_{\lambda}\text{Prob}\left(x_k^{n,\frac{1}{2}}(t)=\lambda_k+n-k, 1\le k\le n\right)p^{N+1}_Y(\lambda_1,\cdots,\lambda_n)\\
    =\sum_l \text{Prob}\left(x_k^{n,\frac{1}{2}}(t)=\lambda_k+n-k, 1\le k\le n\right)\overline{p}^{N+1}_Y\left(l_1+n-\frac{1}{2},\cdots,l_n+n-\frac{1}{2}\right)\\
    =\mathbb{E}\left[\overline{p}^{N+1}_Y\left(x_1^{n,\frac{1}{2}}(t),\cdots,x^{n,\frac{1}{2}}_n(t)\right)\right].
\end{multline*}

Thus,
\begin{equation*}
     \langle Q^{n,\frac{1}{2}}_tY\rangle_{\frac{s}{2}}=\mathbb{E}\left[Q^{n,\frac{1}{2}}_t\overline{p}^{N+1}_Y\left(x_1^{n,\frac{1}{2}}(s),\cdots,x^{n,\frac{1}{2}}_n(s)\right)\right]
    =\mathbb{E}\left[\overline{p}^{N+1}_Y\left(x_1^{n,\frac{1}{2}}(t+s),\cdots,x^{n,\frac{1}{2}}_n(t+s)\right)\right]=\langle Y\rangle_{\frac{s+t}{2}}.
\end{equation*}

Similarly, when $N+1=2n$, for any  $O\in SO(N+1)$, \begin{equation*}
      e^{\frac{1}{2}t\text{Tr}(O-I)}=\sum_{\lambda}\text{Prob}\left(x_k^{n,-\frac{1}{2}}(t)=\lambda_k+n-k, 1\le k\le n\right)\frac{\chi^\lambda_{SO(2n)}(O)+\chi^{\lambda^*}_{SO(2n)}(O)}{2\text{dim}_{SO(2n)} \lambda}
\end{equation*}
by \eqref{eq: 8}.  
\begin{multline*}
     \langle Y\rangle_{\frac{t}{2}}=\sum_{\lambda}\text{Prob}\left(x_k^{n,-\frac{1}{2}}(t)=\lambda_k+n-k, 1\le k\le n\right)\frac{\langle Y\rangle_{\chi^\lambda}+\langle Y\rangle_{\chi^{\lambda^*}}}{2\text{dim}\lambda} \\
    =\sum_{\lambda}\text{Prob}\left(x_k^{n,-\frac{1}{2}}(t)=\lambda_k+n-k, 1\le k\le n\right)\frac{p^{N+1}_Y(\lambda_1,\cdots,\lambda_n)+p^{N+1}_Y(\lambda_1,\cdots,-\lambda_n)}{2}.
\end{multline*}

 Since $l_k=\lambda_k-k+1$ when $N+1=2n$, we define $\overline{p}^{N+1}_Y$ for even $N+1$ such that
\begin{equation*}
    \overline{p}^{N+1}_Y(l_1+n-1,\cdots,l_n+n-1)=\frac{p^{N+1}_Y(\lambda_1,\cdots,\lambda_n)+p^{N+1}_Y(\lambda_1,\cdots,-\lambda_n)}{2}.
\end{equation*}
Then,
\begin{multline*}
      \langle Y\rangle_{\frac{t}{2}} =\sum_l \text{Prob}\left(x_k^{n,-\frac{1}{2}}(t)=\lambda_k+n-k, 1\le k\le n\right)\overline{p}^{N+1}_Y(l_1+n-1,\cdots,l_n+n-1)\\
   =\mathbb{E}\left[\overline{p}^{N+1}_Y\left(x_1^{n,-\frac{1}{2}}(t),\cdots,x^{n,-\frac{1}{2}}_n(t)\right)\right].
\end{multline*}
Again, we have  $\langle Q^{n,-\frac{1}{2}}_tY\rangle_{_{\frac{s}{2}}}=\langle Y\rangle_{_{\frac{s+t}{2}}}$.
\end{proof}

\begin{Theorem}\label{Thm: 6}
$P_{\frac{t}{2}}X=Q^{n,a}_{t}X$ for all $X\in Z(U(\mathfrak{so}_{N+1}))$. In particular, $P_{\frac{t}{2}}$ is the Markov operator of the process $\left(x_1^{n,a}(t)>\cdots>x_n^{n,a}(t)\right)$.
\end{Theorem}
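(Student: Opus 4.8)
The plan is to show $P_{\frac t2}$ and $Q^{n,a}_{t}$ agree on every central element by testing both against a separating family of states, after which the probabilistic claim is a reformulation. Both operators map $Z(U(\mathfrak{so}_{N+1}))$ into itself: for $P_{\frac t2}$ this is part (2) of Theorem \ref{Thm: 1}, and for $Q^{n,a}_{t}$ it is exactly the way $Q^{n,a}_{t}$ was made to act on $Z(U(\mathfrak{so}_{N+1}))$ through the Harish--Chandra isomorphism. So I fix $X\in Z(U(\mathfrak{so}_{N+1}))$ and aim to prove $P_{\frac t2}X=Q^{n,a}_{t}X$; the ``in particular'' is then immediate, since $Q^{n,a}_{t}$ is by definition the Markov operator of $(x_1^{n,a}(t)>\cdots>x_n^{n,a}(t))$.

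The engine of the proof is that the two semigroup--covariance identities already available pin down the states of the two images. By part (3) of Theorem \ref{Thm: 1}, $\langle P_{\frac t2}X\rangle_{s/2}=\langle X\rangle_{(s+t)/2}$ for every $s\ge 0$, and by Proposition \ref{Prop: 4}, $\langle Q^{n,a}_{t}X\rangle_{s/2}=\langle X\rangle_{(s+t)/2}$ for every $s\ge 0$ as well. Subtracting, $\langle Y\rangle_{s/2}=0$ for all $s\ge 0$, where $Y:=P_{\frac t2}X-Q^{n,a}_{t}X\in Z(U(\mathfrak{so}_{N+1}))$. Everything now reduces to the separation statement: if $Y\in Z(U(\mathfrak{so}_{N+1}))$ and $\langle Y\rangle_{s/2}=0$ for all $s\ge 0$, then $Y=0$.

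To establish separation I would exploit that $\kappa\mapsto\langle Y\rangle_\kappa$ is linear in $\kappa$ by \eqref{eq: 9}, and that $s\mapsto\langle Y\rangle_{\kappa_s}$ extends to an entire function of $s$ (the interchange of the sums over $\lambda$ and over the Taylor order being justified by the super--exponential decay of the Plancherel weights $\hat\kappa_s(\lambda)$ against the polynomial growth of the scalar $p_Y(\lambda)$ and of $\dim\lambda$). Vanishing on $[0,\infty)$ then kills every Taylor coefficient at $s=0$, giving $\langle Y\rangle_{(\mathrm{Tr}(O-I))^m}=0$ for all $m\ge 0$; since $\mathrm{Tr}(O-I)=\chi^{\mathrm{std}}-(N+1)$ and, by the Pieri rule for tensoring with the standard representation of $SO(N+1)$, the powers $\{(\chi^{\mathrm{std}})^m\}_{m\ge 0}$ span the space of characters of all partition--indexed irreducibles — which when $N+1=2n$ means the span of the symmetric combinations $\chi^\lambda+\chi^{\lambda^*}$ of \eqref{eq: 8} — it follows that $\langle Y\rangle_{\chi^\lambda}=0$ (resp.\ $\langle Y\rangle_{\chi^\lambda+\chi^{\lambda^*}}=0$) for all $\lambda$. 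As recorded after the Theorem of \cite{MR2355506}, $\langle Y\rangle_{\chi^\lambda}$ is a nonzero multiple of the Harish--Chandra scalar $p_Y(\lambda)$, so $p_Y$ (resp.\ $\lambda\mapsto p_Y(\lambda)+p_Y(\lambda^*)$) vanishes on all dominant weights; these being Zariski--dense and $p_Y$ having the special shape from \cite{MR2355506}, we conclude $p_Y\equiv 0$, hence $Y=0$ by injectivity of the Harish--Chandra isomorphism.

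I expect this separation step to be the crux, with two sub--points requiring care. First, the analytic continuation and term--by--term treatment of the infinite sum defining $\langle Y\rangle_{\kappa_s}$, which should reduce to routine decay bounds for the Plancherel--type measures in \eqref{eq: 6}. Second, in the even case $N+1=2n$, the deduction that $p_Y(\lambda)+p_Y(\lambda^*)\equiv 0$ forces $p_Y\equiv 0$ for $p_Y$ of the prescribed shape: this should follow because, in the shifted coordinates $l_i=\lambda_i+\rho_i$, the map $\lambda\mapsto\lambda^*$ is the reflection $l_n\mapsto 2\rho_n-l_n$, which is off--center (as $\rho_n\neq 0$) and hence admits no nonzero ``odd'' polynomial that is symmetric in $l_1^2,\dots,l_n^2$ plus $l_1\cdots l_n$ times symmetric in $l_1^2,\dots,l_n^2$, so $p_Y$, being odd under it, must vanish.
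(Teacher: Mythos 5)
Your overall strategy is the paper's: both operators preserve the centre, Theorem \ref{Thm: 1}(3) and Proposition \ref{Prop: 4} give $\langle P_{\frac t2}X\rangle_{s/2}=\langle Q^{n,a}_tX\rangle_{s/2}=\langle X\rangle_{(s+t)/2}$, and the theorem reduces to the separation statement that a central $Y$ with $\langle Y\rangle_{s}=0$ for all $s\ge 0$ must vanish. The paper only sketches this last step by citing Proposition 4.4 of \cite{kuan2014threedimensional}; you fill it in (analyticity in $s$, extraction of the coefficients $\langle Y\rangle_{(\mathrm{Tr}(O-I))^m}$, spanning by powers of the standard character, Zariski density, injectivity of Harish--Chandra). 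For $N+1=2n+1$ this fleshed-out separation argument is sound, modulo the decay estimates you yourself flag.

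The genuine gap is your treatment of the even case $N+1=2n$, which is exactly where the difficulty sits. Your argument rests on the paper's displayed normalization $\rho_n=-n+1\neq 0$, so that $\lambda\mapsto\lambda^*$ becomes an off-centre reflection of $l_n$; but that normalization is a misstatement of the Harish--Chandra data. The central character must be invariant under the $D_n$ Weyl group acting on $\lambda+\rho$ in the coordinates $l_i=\lambda_i+n-i$ (for $\mathfrak{so}_4$ the Casimir eigenvalue $\tfrac{\lambda_1^2+\lambda_2^2}{2}+\lambda_1$ is a symmetric function of $(\lambda_1+1)^2,\lambda_2^2$, not of $\lambda_1^2,(\lambda_2-1)^2$), and since the diagram automorphism fixes $\rho$, the map $\lambda\mapsto\lambda^*$ is the \emph{centred} reflection $l_n\mapsto -l_n$. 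Consequently the Pfaffian-type generator $\sqrt{\Lambda_{2n}^{N+1}}$, with image $\pm\, l_1\cdots l_n$, satisfies $p_Y(\lambda)+p_Y(\lambda^*)\equiv 0$; since $\hat\kappa_s(\lambda)=\hat\kappa_s(\lambda^*)$ in \eqref{eq: 9} (the trace is $O(2n)$-conjugation invariant, cf.\ \eqref{eq: 8}), this nonzero central element has $\langle Y\rangle_s=0$ for every $s$. So the separation statement you need is false on the full centre when $N+1=2n$, and your last step does not close; even within the paper's own convention it already fails for $N+1=2$, where $\rho_1=0$. The correct repair is to restrict to the subalgebra of central elements whose eigenvalue is symmetric under $\lambda\mapsto\lambda^*$ (e.g.\ the subalgebra generated by the $\Phi_{2k}^{N+1}$), which is also the only setting in which $Q^{n,a}_t$ is unambiguously defined through $\overline p^{\,N+1}_Y$ and the only case used later in the paper, or else to treat the Pfaffian-type element by a separate argument. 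This defect is arguably inherited from the paper's sketch, but as written your even-case argument would not survive the correct normalization.
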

\begin{proof}
From Theorem \ref{Thm: 1} and Proposition \ref{Prop: 4} we have $\langle P_{\frac{t}{2}}Y\rangle_{_{\frac{s}{2}}}=\langle Q^{n,a}_tY\rangle_{_{\frac{s}{2}}}=\langle Y\rangle_{_{\frac{s+t}{2}}}$. The rest of proof follows the standard arguments as that of Proposition 4.4 in \cite{kuan2014threedimensional}. We only sketch the proof here. To show $P_{\frac{t}{2}}X=Q^{n,a}_{t}X$ for all $X\in Z(U(\mathfrak{so}_{N+1}))$, we show that if there exists a $Y\in Z(U(\mathfrak{so}_{N+1}))$ such that $\langle Y\rangle_t=0$ for all $t\ge 0$, then $Y=0$ by a contradiction.
\end{proof}

When restricting our non–commutative random walk to the Gelfand–Tsetlin subalgebra, which is the subalgebra of $U(\mathfrak{so}_{N+1})$ generated by the centres $Z(U(\mathfrak{so}_k))$, $1\le k\le N+1$,  it also matches the two–dimensional particle system along space–like paths:
\begin{Theorem}\label{Thm: 3}
Suppose $Y_1\in Z(U(\mathfrak{so}_{N_1+1})),\ldots,Y_r\in Z(U(\mathfrak{so}_{N_r+1}))$ are mapped to the shifted polynomials $\overline{p}^{N_1+1}_{Y_1},\ldots,\overline{p}^{N_r+1}_{Y_r}$ as in Proposition \ref{Prop: 4}  under the Harish-Chandra isomorphism. Assume  $N_1\ge\cdots\ge N_r$ and $t_1\le\ldots\le t_r$, then
\begin{equation*}
  \left \langle Y_1\left(P_{\frac{t_2-t_1}{2}}Y_2\right)\cdots \left(P_{\frac{tr-t1}{2}}Y_r\right) \right\rangle_{\frac{t_1}{2}} =\mathbb{E}\left[\overline{p}^{N_1+1}_{Y_1}\left(\mathbf{x}^{n_1,a_1}(t_1)\right)\cdots\overline{p}^{N_r+1}_{Y_r}\left(\mathbf{x}^{n_r,a_r}(t_r)\right)\right],
\end{equation*}
where $\mathbf{x}^{n_r,a_r}$ is the vector of $x^{n_r,a_r}_j$ with $1\le j\le n_r$.
\end{Theorem}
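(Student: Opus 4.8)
The plan is to follow the argument for the $U(\mathfrak{gl}_N)$ analogue in \cite{kuan2014threedimensional}: first convert the non-commutative expectation on the left into a genuine expectation over the particle process by exploiting the Gelfand--Tsetlin structure, and then match the two sides by induction on $r$, using the semigroup property of $P_t$ together with the space-like Markov structure of the surface growth recorded in Proposition~\ref{prop: 1}. The base case $r=1$ is Proposition~\ref{Prop: 4}.

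The first step I would carry out is a multilevel strengthening of Proposition~\ref{Prop: 4}: for every element $Z$ of the Gelfand--Tsetlin subalgebra $\mathcal{G}_{N+1}\subset U(\mathfrak{so}_{N+1})$ generated by the centres $Z(U(\mathfrak{so}_{k+1}))$, $0\le k\le N$, one has $\langle Z\rangle_{t/2}=\mathbb{E}\big[\overline q_Z(\mathbf{x}(t))\big]$, where $\overline q_Z$ is the function on the level-$\le N$ particle configuration obtained by letting each central generator act, through the Harish--Chandra isomorphism, on the partition read off its own level. Indeed, $\mathcal{G}_{N+1}$ is commutative and the branchings $SO(k+1)\downarrow SO(k)$ are multiplicity free, so the Gelfand--Tsetlin basis simultaneously diagonalizes all the $Z(U(\mathfrak{so}_{k+1}))$, and a product $Z=Z_{M_1}\cdots Z_{M_\ell}$ of central elements acts on the basis vector indexed by a pattern $\mu$ through $\prod_i \overline p^{M_i+1}_{Z_{M_i}}(\mu^{(M_i)})$; by \eqref{eq: 7}--\eqref{eq: 8}, \eqref{eq: 6} and \eqref{eq: 2} the weights $\langle\cdot\rangle_{t/2}$ assigns to the patterns are precisely the time-$t$ joint law of the particle configuration (the down-transition matrices $T^{n,\pm}$ are exactly the dimension-ratio branching kernels). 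In particular $P_{(t_j-t_1)/2}Y_j\in Z(U(\mathfrak{so}_{N_j+1}))$ by Theorem~\ref{Thm: 1}(2), and its Harish--Chandra image is $Q^{n_j,a_j}_{t_j-t_1}\overline p^{N_j+1}_{Y_j}$ by Theorem~\ref{Thm: 6}; so the left-hand side of the statement becomes an $\mathbb{E}$ of a product of level-$N_j$ functions evaluated along the particle process.

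Second, I would run the induction on $r$. Let $\mathcal{P}^{\le N}_t$ be the transition operator of the (autonomous) particle process restricted to levels $\le N$ and $\pi^{N\to N'}$ the time-independent level-reduction kernel coming from the $T^{n,\pm}$. Proposition~\ref{prop: 1} (equivalently Proposition~2.5 of \cite{2013} and Proposition~3.6 of \cite{borodin2011random}) tells us that, along the space-like chain $(n_1,a_1,t_1)\succ\cdots\succ(n_r,a_r,t_r)$, the recorded configurations form a Markov chain whose steps alternate level reductions $\pi^{N_j\to N_{j+1}}$ with time evolutions $\mathcal{P}^{\le N_{j+1}}_{t_{j+1}-t_j}$. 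Using the Markov property of this chain and the autonomy of the sub-tower of levels $\le N_r$, I would condition on the history up to $t_{r-1}$ to replace $\overline p^{N_r+1}_{Y_r}(\mathbf{x}^{n_r,a_r}(t_r))$ by $\big(Q^{n_r,a_r}_{t_r-t_{r-1}}\overline p^{N_r+1}_{Y_r}\big)(\mathbf{x}^{n_r,a_r}(t_{r-1}))$; this collapses the $r$-point expectation to an $(r-1)$-point one in which the factor at $(n_{r-1},a_{r-1},t_{r-1})$ is the Gelfand--Tsetlin element $Y_{r-1}\cdot P_{(t_r-t_{r-1})/2}Y_r\in\mathcal{G}_{N_{r-1}+1}$. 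Applying the induction hypothesis (available for observables in the Gelfand--Tsetlin subalgebras thanks to step one) and reassembling the exponents via the semigroup law $P_s\circ P_{s'}=P_{s+s'}$ of Theorem~\ref{Thm: 1}(1) then returns exactly the left-hand side.

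The step I expect to be the main obstacle is the compatibility used implicitly in the induction: one needs that $P_t$ maps $\mathcal{G}_{N+1}$ into itself and that, under the identification of $\mathcal{G}_{N+1}$ with functions of the level-$\le N$ configuration, $P_{t/2}$ \emph{is} the classical operator $\mathcal{P}^{\le N}_t$ --- a statement strictly stronger than Theorem~\ref{Thm: 6}, which identifies $P_{t/2}$ only on each single centre $Z(U(\mathfrak{so}_{k+1}))$ with the single-level operator $Q^{k,b}_t$. I would close this gap by upgrading Theorem~\ref{Thm: 1}(3) and Proposition~\ref{Prop: 4} from central elements to all of $\mathcal{G}_{N+1}$: since $\mathcal{G}_{N+1}$ is commutative and, by step one, $\langle\cdot\rangle_{t/2}$ restricted to it is the classical time-$t$ law on patterns, one gets $\langle P_{t/2}Z\rangle_{s/2}=\langle Z\rangle_{(s+t)/2}$ for every $Z\in\mathcal{G}_{N+1}$; then the separation-of-points argument already used in the proof of Theorem~\ref{Thm: 6} (if $\langle Z\rangle_{t}=0$ for all $t\ge 0$ then $Z=0$) forces $P_{t/2}|_{\mathcal{G}_{N+1}}=\mathcal{P}^{\le N}_t$, so in particular $P_{t/2}$ preserves $\mathcal{G}_{N+1}$ and is compatible with the products occurring in the induction. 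With this in hand the induction goes through as in \cite{kuan2014threedimensional}.
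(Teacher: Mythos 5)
Your overall route---the equal-time identification of the state on the Gelfand--Tsetlin subalgebra with the joint law of the interlacing array (the Gibbs property), followed by an induction along the space-like Markov chain of Proposition~\ref{prop: 1}, with Theorem~\ref{Thm: 6} and the semigroup law doing the bookkeeping---is exactly the argument the paper intends (its proof is a one-line reference to the Gibbs property and to Theorem 4.5 of \cite{kuan2014threedimensional}), and your steps one and two are sound in outline. One small caveat in step two: you should condition on the space-like observations only, i.e.\ invoke Proposition~\ref{prop: 1} with the inserted point $(n_r,a_r,t_{r-1})$, not on ``the history up to $t_{r-1}$'': given the full lower configuration at time $t_{r-1}$ the single level $N_r$ is not Markov (it is blocked by the level below), and the operator $Q^{n_r,a_r}_{t_r-t_{r-1}}$ only appears after averaging the lower levels out with the Gibbs weights.

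The genuine gap is in your resolution of the obstacle you yourself flag. The separation-of-points argument cannot deliver $P_{t/2}|_{\mathcal{G}_{N+1}}=\mathcal{P}^{\le N}_t$: that argument identifies two elements only when their difference is already known to lie in a subalgebra on which the states $\langle\cdot\rangle_s$ separate points, whereas here the whole question is whether $P_{t/2}Z$ lies in $\mathcal{G}_{N+1}$ at all. Theorem~\ref{Thm: 1}(2) preserves each centre, not the Gelfand--Tsetlin subalgebra, and $P_t$ is not multiplicative, so nothing forces $P_{t/2}$ of a product of central elements from different centres back into $\mathcal{G}_{N+1}$; and the states certainly do not separate general elements of $U(\mathfrak{so}_{N+1})$ (for instance $\langle F_{ij}\rangle_s=s\,\mathrm{Tr}(F_{ij})=0$ for all $s\ge 0$ while $F_{ij}\neq 0$), so $\langle P_{t/2}Z-W\rangle_s=0$ for all $s$ with $W\in\mathcal{G}_{N+1}$ does not give $P_{t/2}Z=W$. (It is also not clear that the classical multilevel operator maps eigenvalue functions of $\mathcal{G}_{N+1}$ into themselves, so your candidate $W$ is not obviously well defined.) Fortunately the strong claim is not needed: since every factor in the theorem is $P$ applied to a \emph{central} element, formulate the induction hypothesis for a product, at each time $t_j$, of central elements of the various $Z(U(\mathfrak{so}_{M+1}))$ with $M\le N_j$, each factor propagated individually by $P_{(t_j-t_1)/2}$. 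After your conditioning step the observable at time $t_{r-1}$ is the product of the Harish--Chandra images of the two central elements $Y_{r-1}$ and $P_{(t_r-t_{r-1})/2}Y_r$; applying the generalized induction hypothesis and the semigroup law factorwise, $P_{(t_{r-1}-t_1)/2}\bigl(P_{(t_r-t_{r-1})/2}Y_r\bigr)=P_{(t_r-t_1)/2}Y_r$, recovers the left-hand side without ever applying $P_t$ to a non-central element, and the base case of this induction is precisely your step one. With that reformulation your argument closes and coincides with the proof of Theorem 4.5 in \cite{kuan2014threedimensional} that the paper cites.
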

\begin{proof}
The proof only needs the Gibbs property of the Markov process  $ \left\{x_k^{n,a}|\ 1\le k\le n,\ a\in\left\{\pm 1/2\right\},\ n\ge 1\right\}$ (see e.g. \cite{borodin2011random}) and follows the same lines as that of Theorem 4.5 in \cite{kuan2014threedimensional}.
\end{proof}

\section{Three dimensional Gaussian fluctuations}
In this section, we show that certain elements  of the Gelfand–Tsetlin subalgebra are asymptotically Gaussian with an explicit covariance along space-like paths and time-like paths.

 For a Laurent polynomial $p(v)$, let $p(v)[v^r]$ denotes the coefficient of $v^r$ in $p(v)$. Then, the main theorem is 
\begin{Theorem}\label{Thm: 4}
Suppose $N_j=\lfloor \eta_j L\rfloor$, $t_j=\tau_j L$ for $1\le j\le r$. Assume $\min\{\tau_1,\ldots,\tau_r\}=\tau_1$. Then as $L\xrightarrow{}\infty$,
\begin{equation*}
    \left(\frac{\Phi_{2k_1}^{N_1+1}-\left\langle\Phi_{2k_1}^{N_1+1}\right\rangle_{\frac{t_1}{2}}}{2L^{2k_1}},\ldots,\frac{P_{\frac{t_r-t_1}{2}}\Phi_{2k_r}^{N_r+1}-\left\langle P_{\frac{t_r-t_1}{2}}\Phi_{2k_r}^{N_r+1}\right\rangle_{\frac{t_1}{2}}}{2L^{2k_r}} \right)\xrightarrow{} (\xi_1,\ldots,\xi_r),
\end{equation*}
where the convergence is with respect to the state $\langle\cdot\rangle_{\frac{t_1}{2}}$ and $(\xi_1,\ldots,\xi_r)$ is a Gaussian vector with covariance
\begin{equation*}
   \mathbb{E}[\xi_i\xi_j]=\left\{\begin{array}{lc}
    \frac{1}{(2\pi i)^2}\iint\displaylimits_{|v|>|u|} \left(\frac{(v+2)(\eta_i/2+\tau_i v)^2}{v}\right)^{k_i}\left(\frac{(u+2)(\eta_j/2+\tau_j u)^2}{u}\right)^{k_j}\frac{1}{(v-u)^2}dvdu,    & \eta_i\ge \eta_j,\tau_i\le \tau_j,  \\
       \\
 \frac{\sum_{l=1}^{k_j}c_{k_j,l} (\tau_j,\tau_i,\eta_j)}{(2\pi i)^2} \iint\displaylimits_{|v|<|u|} \left(\frac{(v+2)(\eta_i/2+\tau_i v)^2}{v}\right)^{k_i}\left(\frac{(u+2)(\eta_j/2+\tau_i u)^2}{u}\right)^{l}\frac{1}{(v-u)^2}dvdu,    & \eta_i< \eta_j,\tau_i\le \tau_j, 
    \end{array}\right.
\end{equation*}
where $c_{k,l}$ is defined as the coefficient in the following expansion: for $r\le -1$,
\begin{equation*}
    \sum_{l=1}^k c_{k,l}(\tau_2,\tau_1,\eta_2) \left(\frac{(v+2)\left(\frac{\eta_2}{2}+\tau_1v\right)^2}{v}\right)^l[v^r]=\left(\frac{(v+2)\left(\frac{\eta_2}{2}+\tau_2v\right)^2}{v}\right)^k[v^r].
\end{equation*}
\end{Theorem}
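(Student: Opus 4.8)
The plan is to prove the convergence of moments to Gaussian moments by the standard method of moments for non-commutative random walks, building on Theorem \ref{Thm: 3} and the explicit action of $P_t$ on the centre. First I would fix the scaling $N_j = \lfloor \eta_j L\rfloor$, $t_j = \tau_j L$, and define the centred, rescaled variables
\[
\Xi_j^L = \frac{P_{\frac{t_j-t_1}{2}}\Phi_{2k_j}^{N_j+1} - \langle P_{\frac{t_j-t_1}{2}}\Phi_{2k_j}^{N_j+1}\rangle_{t_1/2}}{2 L^{2k_j}}.
\]
Using Theorem \ref{Thm: 1}(3) and Proposition \ref{Prop: 4}, mixed moments $\langle \Xi_{j_1}^L\cdots \Xi_{j_m}^L\rangle_{t_1/2}$ reduce to expectations of products of shifted symmetric polynomials evaluated along the space-like slices of the particle system, exactly the quantities controlled by Theorem \ref{Thm: 3}. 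The key is then to compute the leading-order asymptotics of these joint moments and show (a) that odd joint cumulants vanish in the limit, and (b) that the limiting second moments match the stated double-contour integral, whence Wick's theorem / Gaussianity follows.

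The second step is the heart of the computation: identifying the covariance. Here I would use the Harish-Chandra image $\Phi_{2k}^{N+1}/2 \mapsto l_1^{2k}+\cdots+l_n^{2k}$ together with the determinantal correlation kernel $K$ from Theorem \ref{thm: 1} (which, by Theorem \ref{Thm: 6}, governs the joint law of the $x_k^{n,a}(t)$). The covariance of $\sum_i l_i^{2k_i}$-type linear statistics is a double integral of the kernel against the test functions, and the asymptotic analysis proceeds by steepest descent: one rewrites the Jacobi polynomials $\mathcal{J}_s^{(a,-1/2)}$ and the exponential factors $e^{t(x-1)}$ in the integral representation of $K$, performs the change of variables $x = 1 + (\text{something})/L$ or passes to the spectral variable, and localizes the $y$- and $u$-contours near the critical point. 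The factor $(y-1)^{n_1}/(u-1)^{n_2}$ combined with $e^{t_1(y-1)-t_2(u-1)}$ produces, after rescaling, the functions $\frac{(v+2)(\eta/2 + \tau v)^2}{v}$ raised to the appropriate powers, and the $\frac{1}{y-u}$ kernel singularity becomes $\frac{1}{(v-u)^2}$ after integration by parts (the square arising from differentiating the single pole, as is typical for GFF covariances). The ordering conditions $\eta_i \ge \eta_j, \tau_i \le \tau_j$ versus $\eta_i < \eta_j, \tau_i \le \tau_j$ correspond precisely to the two cases in Theorem \ref{thm: 1} (presence or absence of the indicator term and whether $n_1 \ge n_2$), and dictate the contour nesting $|v|>|u|$ versus $|v|<|u|$.

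The third ingredient is the appearance of the coefficients $c_{k,l}(\tau_j,\tau_i,\eta_j)$ in the time-like case $\eta_i < \eta_j$. Here the test function attached to the ``later, larger'' index $j$ is $\left(\frac{(u+2)(\eta_j/2 + \tau_j u)^2}{u}\right)^{k_j}$, but the covariance only sees the part of this Laurent polynomial that pairs nontrivially with negative powers $v^r$, $r\le -1$, coming from the other variable; re-expanding $\left(\frac{(u+2)(\eta_j/2+\tau_j u)^2}{u}\right)^{k_j}$ in terms of $\left(\frac{(u+2)(\eta_j/2+\tau_i u)^2}{u}\right)^{l}$ via the defining relation for $c_{k,l}$ isolates exactly that contribution. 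I would verify this by a residue computation: the non-commutative expectation $\langle P_{(t_j-t_1)/2}\Phi_{2k_j}^{N_j+1}\,(\cdots)\rangle$ evolves the spectral variable $l^2 \mapsto$ (a $\tau_i$-shifted quantity) under the backward heat flow, and only the singular part survives the contour integral against the smaller level.

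The main obstacle I expect is controlling the steepest-descent asymptotics uniformly enough to justify (i) the vanishing of all joint cumulants of order $\ge 3$ — i.e. that the model is asymptotically Gaussian, not merely that second moments converge — and (ii) the interchange of the $L\to\infty$ limit with the contour integrals, including handling the zeroes of $E^\omega$ which must stay outside the $u$-contour. For (i) one shows that the $m$-point connected correlation of the linear statistics is $O(L^{-(m-2)})$ by a careful power count in the determinantal expansion of $\rho_M$, using that each off-diagonal kernel entry contributes one power of $L^{-1}$ from the $\frac{1}{y-u}$ singularity after rescaling while the ``diagonal'' combinatorics produces exactly the matchings-into-pairs structure. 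This is routine in spirit (it mirrors \cite{2013,kuan2014threedimensional,AB-CLT}) but technically the most laborious part, and is where the bulk of the write-up would go; I would likely organize it as a lemma bounding the rescaled kernel and then invoke the general determinantal-CLT machinery.
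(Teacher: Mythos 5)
Your plan is adequate for the space-like half of the statement, but it fails exactly where the theorem is new: the time-like case $\eta_i<\eta_j$, $\tau_i\le\tau_j$. Your claim that the two cases of the covariance formula ``correspond precisely to the two cases in Theorem \ref{thm: 1}'' is incorrect: Theorem \ref{thm: 1} requires all points to be pairwise comparable with respect to $\prec$, i.e.\ larger level must go with smaller (or equal) time, so a pair with $\eta_i<\eta_j$ and $\tau_i<\tau_j$ is time-like and is not covered by any determinantal kernel. Consequently neither your steepest-descent computation of the covariance nor your determinantal power-counting bound on higher cumulants is available for such pairs --- this is the very reason the paper constructs the non-commutative random walk on $U(\mathfrak{so}_{N+1})$. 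The paper's time-like argument (Proposition \ref{Prop: 8}) is algebraic: using that $P_t$ preserves $Z(U(\mathfrak{so}_{N+1}))$ (Theorem \ref{Thm: 1}), the weight estimates $\langle\Phi_\rho^{N+1}\rangle_t=\Theta\left(L^{\text{wt}(\rho)}\right)$, and the expansion $P_{\tau L}\Phi_{2k}^{N+1}=\sum_\rho\left(c'_{k,\rho}+o(1)\right)L^{2k+1-\text{wt}(\rho)}\Phi_\rho^{N+1}$, one replaces $P_{(\tau_j-\tau_i)L/2}\Phi_{2k_j}^{N_j+1}$ in the limit by a linear combination $\sum_l c_{k_j,l}\Phi_{2l}^{N_j+1}$ at the common time $\tau_i$, thereby reducing the time-like covariance to equal-time (hence space-like) covariances to which Proposition \ref{prop: 6} applies with $\tau_j$ replaced by $\tau_i$; the coefficients $c_{k,l}$ are then determined by matching the coefficients of $v^r$, $r\le -1$, as in \eqref{eq: 7.4}, since only that part of the Laurent polynomial is seen by the $1/(v-u)^2$ pairing. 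Your ``backward heat flow / only the singular part survives'' remark gestures at this, but as written it presupposes a contour-integral representation of the mixed time-like expectation that does not exist, and without the centre-preservation and weight estimates the re-expansion is unjustified; the same issue invalidates your cumulant-vanishing argument for any configuration containing a time-like pair, which the paper instead handles by first linearizing via Proposition \ref{Prop: 8} and only then invoking Wick's formula through Theorems \ref{Thm: 3} and \ref{Thm: 2}.

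For the space-like covariance your direct kernel-asymptotics route is plausible but heavier than what the paper does, and it would still require justifying the contour deformations (zeroes of $E^\omega$) and the interchange of limits. The paper avoids a fresh steepest-descent analysis altogether: it writes $p_{2k}^{N+1}$ as an integral of the height function, invokes the already-established space-like GFF convergence (Theorem \ref{Thm: 2}, transferred to the states via Theorem \ref{Thm: 3} and Proposition \ref{Prop: 4}), and then converts the limiting covariance $\mathcal{G}(z,w)$ into the stated double contour integral by the substitution $v=\frac{z+z^{-1}}{2}-1$, $u=\frac{w+w^{-1}}{2}-1$, an integration by parts producing the $1/(v-u)^2$ kernel, and a deformation to concentric circles giving $|v|>|u|$. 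If you keep your route for the space-like part, you must supply that asymptotic analysis yourself; for the time-like part you must replace the kernel-based argument by the algebraic reduction described above, since no determinantal structure is available there.
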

\begin{Remark}
It is straightforward from
inspection that the covariance for the random surface growth  when $ \eta_i\ge \eta_j,\tau_i\le \tau_j$ is different form the covariance for the spectra of overlapping stochastic Wishart matrices in  \cite{kuan2021threedimensional} due to the lack of a second integral. 

It is not obvious that the covariances along time-like paths are different. Heuristically, if they were the same, the covariance formula in Theorem \ref{Thm: 4}  when $\eta_i< \eta_j,\tau_i\le \tau_j $ could be written as 
\begin{multline*}
     \frac{C_1}{(2\pi i)^2}\iint\displaylimits_{|v|<|u|} \left(\frac{(v+2)(\eta_i/2+\tau_i v)^2}{v}\right)^{k_i}\left(\frac{(u+2)(\eta_j/2+\tau_j u)^2}{u}\right)^{k_j}\frac{1}{(v-u)^2}dvdu\\
     +C_2\cdot\text{Residue}\left(\left(\frac{(v+2)(\eta_i/2+\tau_i v)^2}{v}\right)^{k_i}\right)\text{Residue}\left(\left(\frac{(u+2)(\eta_j/2+\tau_j u)^2}{u}\right)^{k_j}\right),
\end{multline*}
where $C_1$ and $C_2$ are constants which are  independent of $k_{i}$ and $k_j$. However, after checking a few examples, the constants $C_1$ and $C_2$ do not exist. For example, letting $k_i=1,2$ and $k_j=1$ and solving for $C_{1,2}$ yields $C_1=\tau_i^2/\tau_j^2$ and $C_2=2\tau_1(\tau_2-\tau_1)/(\eta_2\tau_2)$. However, if  $C_1=\tau_i^2/\tau_j^2$ and $C_2=2\tau_1(\tau_2-\tau_1)/(\eta_2\tau_2)$,  the covariances  are not equal when $k_i=1$ and $k_j=2$.
\end{Remark}

\subsection{Gaussian  fluctuations along space-like paths}
In this section, we focus on the space-like paths. It was proved that the random surface growth converges to a deterministic limit shape and the fluctuations around the limit shape are described by the Gaussian free field fluctuations  (see e.g. \cite{borodin2011random,10.1214/EJP.v19-3732}).

Let $G(z)=G(\nu,\eta,\tau;z)$ be the function
\begin{equation*}
  G(\nu,\eta,\tau;z)=\tau\frac{z+z^{-1}}{2}+\eta\log\left(\frac{z+z^{-1}}{2}-1\right)-\nu\log z, 
\end{equation*}
and $\mathcal{D}$ be the connected domain consisting of all triples $(\nu,\eta,\tau)$ such that $ G(\nu,\eta,\tau;z)$ has a unique critical point in the region $\mathbb{H-D}=\left\{z|\Im z>0,\ |z|>1\right\}$. Let $\Upsilon$ be the map sending $(\nu,\eta,\tau)\in\mathcal{D}$ to the critical point of $ G(\nu,\eta,\tau;z)$ in $  \mathbb{H-D}$. Specifically, 
\begin{equation*}
    \mathcal{D}=\left\{(\nu,\eta,\tau)| l(\eta,\tau)< \nu< r(\eta,\tau), \tau,\eta>0\right\},
\end{equation*}
where 
\begin{equation*}
    \begin{array}{l}
        r(\eta,\tau)=\sqrt{-\frac{\tau^2}{2}+5\tau\eta+1+\frac{\tau^2}{2}\left(1+\frac{4\eta}{\tau}\right)^{3/2}},\\
        \\
    l(\eta,\tau)=\left\{\begin{array}{lc}
      0, & \frac{\eta}{\tau}\le 2,\\
      \sqrt{-\frac{\tau^2}{2}+5\tau\eta+1-\frac{\tau^2}{2}\left(1+\frac{4\eta}{\tau}\right)^{3/2}}, & \frac{\eta}{\tau}> 2. 
    \end{array}\right. 
\end{array}
\end{equation*}

Let $H(x,N,t)$ be the height function of $\mathbf{x}$, which is defined as the number of particles to the right of $(x,n,a)$ at time $t$.  We recall the limit shape of the height function $H$ from \cite{borodin2011random}.
\begin{Theorem}\cite{borodin2011random}\label{Thm: 5}
 For any $\left(\nu,\frac{\eta}{2},\tau\right)\in\mathcal{D}$, suppose $\Upsilon\left(\nu,\frac{\eta}{2},\tau\right)=z_0$, then the limit shape exists,
\begin{equation}\label{eq: 10}
h(z_0)=    \lim_{L\xrightarrow[]{}\infty}\frac{1}{L}\mathbb{E}H(\lfloor\nu L\rfloor, \lfloor \eta L\rfloor, \tau L)=\Im\left(\frac{G\left(z_0\right)}{\pi}    \right).
\end{equation}
\end{Theorem}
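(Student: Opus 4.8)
The plan is to follow the steepest-descent analysis of the single-time correlation kernel, exactly as in \cite{borodin2011random}; I outline the main steps. First I would reduce the limit shape to the large-$L$ asymptotics of the one-point function: at a fixed time the process is determinantal with the kernel of Theorem \ref{thm: 1} specialized to $t_1=t_2$, so its one-point function is the diagonal value of $K$ and
\begin{equation*}
 \mathbb{E}H(x,N,t)=\sum_{s\ge x}K(n,a,t,s;n,a,t,s),\qquad n=\left\lfloor\tfrac{N+1}{2}\right\rfloor .
\end{equation*}
For the densely packed initial condition $\omega=(0,0,0)$ one has $E^\omega\equiv 1$, so $K(n,a,t,s;n,a,t,s)$ is an explicit double contour integral of Jacobi polynomials with $\varphi_t(x)=e^{t(x-1)}$, and after summing in $s$ (using a Christoffel--Darboux type identity to collapse $\sum_{s\ge x}\mathcal{J}_s^{(a,-1/2)}(y)J_s^{(a,-1/2)}(u)$) the expected height function is again a double contour integral of the same shape.

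Next I would substitute $y=(z+z^{-1})/2$ and $u=(w+w^{-1})/2$ in the two integration variables and insert the scaling $x=\lfloor\nu L\rfloor$, $N=\lfloor\eta L\rfloor$, $t=\tau L$, so that $n\sim\tfrac{\eta}{2}L$ and $s\sim\nu L$. Using the two-term large-degree asymptotics of $\mathcal{J}^{(a,-1/2)}_s$ under this substitution, whose exponential parts are $z^{\pm s}$, the factors $e^{t(y-1)}$, $(y-1)^n$ and $z^{\pm s}$ combine into $\exp\!\big(\pm L\,G(\nu,\tfrac{\eta}{2},\tau;z)\big)$ up to subexponential prefactors, with $G$ the function in the statement; the $w$-variable contributes the reciprocal. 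Thus as $L\to\infty$ the integrand concentrates according to $\exp\!\big(L(G(z)-G(w))\big)$, and the asymptotics are governed by the critical points of $G(\nu,\tfrac{\eta}{2},\tau;\cdot)$.

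By the definition of $\mathcal{D}$, the function $G(\nu,\tfrac{\eta}{2},\tau;\cdot)$ has a unique critical point $z_0=\Upsilon(\nu,\tfrac{\eta}{2},\tau)$ in $\mathbb{H-D}$, and hence also $\bar z_0$ in the lower half-plane. I would deform the $y$- and $u$-contours to pass through $z_0$ and $\bar z_0$ along steepest-descent paths; only these two saddles survive in the limit, and the standard residue bookkeeping yields the limiting particle density on the line $\{(\,\cdot\,,\lfloor\eta L\rfloor,\tau L)\}$ at position $\nu$, namely $\rho(\nu,\eta,\tau)=\tfrac1\pi\arg z_0$. Finally, since $H$ counts the particles strictly to the right, $\tfrac1L\mathbb{E}H(\lfloor\nu L\rfloor,\lfloor\eta L\rfloor,\tau L)\to\int_{\nu}^{r(\eta/2,\tau)}\rho(\nu',\eta,\tau)\,d\nu'$, and I would identify this integral with $\Im\!\big(G(z_0)/\pi\big)$: differentiating $\Im(G(z_0)/\pi)$ in $\nu$ and using the critical-point equation $\partial_zG|_{z_0}=0$ gives $\partial_\nu\Im\!\big(G(z_0)/\pi\big)=\tfrac1\pi\Im\!\big(\partial_\nu G(z_0)\big)=-\tfrac1\pi\arg z_0=-\rho(\nu,\eta,\tau)$, while $\Im(G(z_0)/\pi)\to0$ at the right edge $r(\eta/2,\tau)$, where $z_0$ becomes real; matching boundary values then yields \eqref{eq: 10}.

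The hard part will not be the formal saddle-point computation but the uniform analytic estimates: controlling the subexponential prefactors in the Jacobi asymptotics uniformly over the whole summation range in $s$, and justifying the contour deformations — the $u$-contour $C$ must stay off the cut $[-1,1]$ while the $y$-contour runs along $[-1,1]$, and both must be steered through the conjugate saddles $z_0,\bar z_0$ without crossing — together with a separate treatment of the edges of $\mathcal{D}$ and of the neighbourhood of the reflecting wall. These estimates are precisely what is carried out in \cite{borodin2011random}, from which the statement is quoted.
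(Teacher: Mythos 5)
The paper does not prove this statement at all: it is imported verbatim from \cite{borodin2011random}, and your outline follows essentially the same route as that reference (expressing $\mathbb{E}H$ through the diagonal of the determinantal kernel, steepest descent for $G$ at the critical point $z_0=\Upsilon(\nu,\eta/2,\tau)$, density $\tfrac{1}{\pi}\arg z_0$, then integrating in $\nu$ and matching at the edge where $z_0$ becomes real). So your proposal is correct in approach and consistent with the source of the quoted theorem, with the genuinely technical work (uniform Jacobi asymptotics, contour deformations, edge behavior) rightly deferred to \cite{borodin2011random}.
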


By Theorem 1.1 in \cite{10.1214/EJP.v19-3732}, the height fluctuations converge to a Gaussian free field on the domain $\mathcal{D}$. The proof is based on the fact that the interacting particle system is a determinantal point process. Analogous to \cite{Borodin_2007,10.1214/EJP.v19-3732}, the determinantal structure derived in Theorem \ref{thm: 1} also leads to the convergence of the moments of height fluctuations to that of a Gaussian free field along space-like paths. As a result, we could  generalize Theorem 1.1 in \cite{10.1214/EJP.v19-3732} as the  following.
\begin{Theorem}\label{Thm: 2}
For any $r\in\mathbb{N}^+$, let $\varkappa_j=\left(\nu_j,\frac{\eta_j}{2},\tau_j\right)\in\mathcal{D}$ for $1\le j\le r$. Define
\begin{equation*}
    H_L(\nu,\eta,\tau)=\sqrt{\pi}\left(H(\nu L,\lfloor\eta L\rfloor,\tau L)-\mathbb{E}H(\nu L,\lfloor\eta L\rfloor,\tau L)\right)
\end{equation*}
and $\Upsilon_j=\Upsilon(\varkappa_j)$. Assume $\{\varkappa_j\}_{j=1}^r$ lie on a space-like path, that is  $\eta_1\ge \ldots\ge \eta_r$ and $\tau_1\le \ldots\le \tau_r$, then
\begin{equation*}
    \lim_{L\xrightarrow[]{}\infty}\mathbb{E}(H_L(\varkappa_1)\cdots H_L(\varkappa_r))=\left\{ \begin{array}{lr}
         \sum_{\sigma}\prod_{i=1}^{r/2} \mathcal{G}(\Upsilon_{\sigma(2i-1)},\Upsilon_{\sigma(2i)}),  & r\ \text{even},  \\
            0, & r\ \text{odd},
             \end{array}\right. 
\end{equation*}
where the sum is over all fixed point free involutions $\sigma$ on $\{1,\ldots,r\}$ and $\mathcal{G}$ is the funciton
\begin{equation*}
      \mathcal{G}(z,w)=\frac{1}{2\pi}\log \left|\frac{z+z^{-1}-\overline{w}-\overline{w}^{-1}}{z+z^{-1}-w-w^{-1}}\right|.
\end{equation*}
\end{Theorem}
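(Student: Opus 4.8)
The plan is to derive Theorem~\ref{Thm: 2} from the determinantal structure established in Theorem~\ref{thm: 1} by the machinery used for the single-time Gaussian free field statement in \cite{10.1214/EJP.v19-3732}, combined with the multi-level bookkeeping of \cite{Borodin_2007,borodin2008large}; the only genuinely new analytic input is a multi-time saddle-point estimate for the kernel $K$.

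\textbf{Step 1: from moments to cumulants of the kernel.} First I would realise $H(x,N_j,t_j)$ as the number of points of the simple determinantal point process on level $(n_j,a_j)$ at time $t_j$ lying weakly to the right of the spatial location indexed by $x$, so that $H_L(\varkappa_j)$ is a linear statistic of the process. Then $\mathbb{E}\big[\prod_{j=1}^r H_L(\varkappa_j)\big]$ is a finite alternating combination of sums of the correlation functions $\rho_M=\det[K(\cdot,\cdot)]$ over products of half-lines $\{s\ge \nu_j L\}$, with the usual inclusion--exclusion handling coincident points. Passing to joint cumulants, the determinantal identity gives the standard cyclic representation
\begin{equation*}
  \mathrm{Cum}\big(H_L(\varkappa_{j_1}),\dots,H_L(\varkappa_{j_m})\big)=\sum_{\sigma}(-1)^{m-1}\sum_{s_1\ge \nu_{j_{\sigma(1)}}L}\!\!\cdots\!\!\sum_{s_m\ge \nu_{j_{\sigma(m)}}L} K(\varkappa'_{\sigma(1)},\varkappa'_{\sigma(2)})\cdots K(\varkappa'_{\sigma(m)},\varkappa'_{\sigma(1)})
\end{equation*}
(up to lower-order corrections from coincidences), the sum being over cyclic orderings $\sigma$ of $\{1,\dots,m\}$ and $\varkappa'_k=(n_{j_k},a_{j_k},t_{j_k},s_k)$. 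By Wick's theorem, the theorem follows once one shows (i) $\mathrm{Cum}(H_L(\varkappa_i),H_L(\varkappa_j))\to \mathcal{G}(\Upsilon_i,\Upsilon_j)$, and (ii) all cumulants of order $\ge 3$ tend to $0$.

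\textbf{Step 2: asymptotics of the kernel.} Substituting $y=(w+w^{-1})/2$ and $u=(z+z^{-1})/2$ in the double contour integral of Theorem~\ref{thm: 1}, and using the classical Laplace-type asymptotics of the renormalised Jacobi polynomials $\mathcal{J}_{\lfloor \nu L\rfloor}^{(a,-1/2)}\big((z+z^{-1})/2\big)=z^{\pm \nu L}(1+o(1))$ on the relevant arcs (already used in \cite{borodin2011random} for the single-time kernel), together with $(y-1)^{n_j}=e^{(\eta_j/2)L\log((w+w^{-1})/2-1)}$ and $\varphi_{t_j}(y)=e^{\tau_j L((w+w^{-1})/2-1)}$, the double-integral part of $K(\varkappa_i,\varkappa_j)$ (for $s_i\approx\nu_i L$, $s_j\approx\nu_j L$) takes the form
\begin{equation*}
 \frac{1}{(2\pi i)^2}\oint\oint e^{L\big(G(\nu_i,\eta_i/2,\tau_i;w)-G(\nu_j,\eta_j/2,\tau_j;z)\big)}\,\frac{C_L(z,w)}{(w-z)\big(1-(wz)^{-1}\big)}\,dz\,dw,
\end{equation*}
with $G$ the action of Theorem~\ref{Thm: 5} and $C_L$ a bounded, slowly varying prefactor collecting $E^\omega$, the $W$-weights and the Jacobian factors. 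By definition of $\mathcal{D}$, each $G(\varkappa_j;\cdot)$ has a unique critical point $\Upsilon_j=z_j$ in $\mathbb{H-D}$; I would deform the $w$-contour through $z_i,\bar z_i$ and the $z$-contour through $z_j,\bar z_j$ along steepest-descent paths, avoiding $[-1,1]$ and the zeros of $E^\omega$, and respecting the nesting forced by the space-like ordering $\eta_i\ge\eta_j,\ \tau_i\le\tau_j$. The leading contribution is the pinch of the Cauchy-type factor at the saddles, and summing the resulting kernel against $\mathbb{1}_{s_1\ge\nu_iL}\mathbb{1}_{s_2\ge\nu_jL}$ produces, exactly as in \cite{Borodin_2007,10.1214/EJP.v19-3732}, the logarithm
\begin{equation*}
 \mathrm{Cum}\big(H_L(\varkappa_i),H_L(\varkappa_j)\big)\longrightarrow \frac{1}{2\pi}\log\left|\frac{z_i+z_i^{-1}-\bar z_j-\bar z_j^{-1}}{z_i+z_i^{-1}-z_j-z_j^{-1}}\right|=\mathcal{G}(\Upsilon_i,\Upsilon_j).
\end{equation*}
The single-integral term in $K$ is present only for pairs $i<j$ along the space-like path (where $(n_i,a_i,t_i)\succ(n_j,a_j,t_j)$); it is treated exactly as in the single-time analysis of \cite{borodin2011random} (where it already appears when $n_1\ge n_2$) and, once the contours are deformed, supplies precisely the residual piece turning the $w$-logarithm into the $\bar w$-logarithm, so it does not change the limit above. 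For $m\ge 3$, each of the $m$ kernel factors localises at its saddle after the change of variables, and the product of $m$ Cauchy kernels integrated over $m$ contours nested according to the space-like inequalities has no pinch singularity, so the $m$-fold sum is $o(1)$; this is the argument of \cite{Borodin_2007}, which transfers verbatim since the extra time parameters only modify $G$ by the $\tau$-linear term. Feeding (i) and (ii) into Wick's theorem yields the stated Gaussian moments, with the pairing sum over fixed-point-free involutions and odd moments equal to $0$.

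\textbf{Main obstacle.} The hard part will be making the multi-time steepest-descent analysis of Step~2 uniform and rigorous: one must (a) choose global $z$- and $w$-contours through all the saddles $z_j,\bar z_j$ that respect the nesting dictated by the space-like ordering and avoid $[-1,1]$ and the zeros of $E^\omega$, (b) control the Jacobi-polynomial asymptotics uniformly for $s_1,s_2$ in an $o(L)$-window around $\nu_j L$, and (c) sum the local estimates over the half-lines $\{s\ge \nu_j L\}$ so as to extract exactly the logarithmic term with no spurious polynomially large remainder. Since these steps parallel \cite{10.1214/EJP.v19-3732} in the single-time case and \cite{Borodin_2007} for the multi-level organisation, in the write-up I would import those estimates wherever they transfer directly and detail only the new multi-time saddle-point computation.
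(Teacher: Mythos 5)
Your proposal follows essentially the same route as the paper, which proves Theorem \ref{Thm: 2} by combining the multi-time determinantal structure of Theorem \ref{thm: 1} with the moment/steepest-descent analysis of \cite{Borodin_2007,10.1214/EJP.v19-3732}, the only new input being that the extra time factor $e^{t(y-1)}/e^{t(u-1)}$ merely adds the $\tau$-linear term to the action $G$. Your cumulant-plus-saddle-point plan, including the treatment of the indicator (single-integral) term and the vanishing of higher cumulants, is the intended argument, so it matches the paper's approach.
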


Let $p_{2k}^{N+1}=\sum_{i=1}^n l_i^{2k}$, which is the image of $\frac{\Phi^{N+1}_{2k}}{2}$ under the Harish-Chandra isomorphism. We first relate the height function and the polynomials $p_{2k}^{N+1}$ as in \cite{BB-Plancherel}. The definition of the height function implies that
\begin{equation*}
    \frac{d}{du}H(u,N)=-\sqrt{\pi}\sum_{s=1}^{n}\delta(u-(\lambda_s^{N}-s+n)).
\end{equation*}
Recall that for $N+1=2n+1/2+a$,  $l_i=\lambda_i-i+\frac{3}{4}-\frac{a}{2}$,
\begin{equation}
    \frac{d}{du}H(u,N)=-\sqrt{\pi}\sum_{s=1}^{n}\delta\left(u-\left(l_s+n-\frac{3}{4}+\frac{a}{2}\right)\right).
\end{equation}

Then, let $u=Lx$, $N=\lfloor L\eta\rfloor$, $t=L\tau$,
\begin{multline*}
     p_{2k}^{N+1}=\int_{0}^\infty \left(u-n+\frac{3}{4}-\frac{a}{2}\right)^{2k}\left(\sum_{s=1}^{n} \delta\left(u-n-l_s+\frac{3}{4}-\frac{a}{2}\right)\right)du
     \\=-\frac{1}{\sqrt{\pi}}\int_{0}^\infty \left(u-n+\frac{3}{4}-\frac{a}{2}\right)^{2k} \frac{d}{du}H(u,N)du\\
     \approx \frac{1}{\sqrt{\pi}}\left(n^{2k+1}+2k\int_{0}^\infty (u-n)^{2k-1} H(u,N)du\right).  \end{multline*}
   
The relation between height function and the polynomial $p_{2k}^{N+1}$ indicates the convergence to a Gaussian free field.
  \begin{Proposition}\label{Prop: 7}
  Suppose $N_j=\lfloor \eta_j L\rfloor$, $t_j=\tau_j L$ for $1\le j\le r$. Assume $\eta_1\ge \ldots\ge \eta_r$ and $\tau_1\le \ldots\le \tau_r$, then as $L\xrightarrow{}\infty$,
\begin{equation*}
    \left(\frac{\Phi_{2k_1}^{N_1+1}-\left\langle\Phi_{2k_1}^{N_1+1}\right\rangle_{\frac{t_1}{2}}}{2L^{2k_1}},\ldots,\frac{P_{\frac{t_r-t_1}{2}}\Phi_{2k_r}^{N_r+1}-\left\langle P_{\frac{t_r-t_1}{2}}\Phi_{2k_r}^{N_r+1}\right\rangle_{\frac{t_1}{2}}}{2L^{2k_r}} \right)\xrightarrow{} (\xi_1,\ldots,\xi_r),
\end{equation*}
where the convergence is with respect to the state $\langle\cdot\rangle_{\frac{t_1}{2}}$ and $(\xi_1,\ldots,\xi_r)$ is a Gaussian vector. 
 \end{Proposition}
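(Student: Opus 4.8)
The plan is to pass from the non-commutative moments appearing in the statement to classical moments of the interacting particle system via Theorem~\ref{Thm: 3} and Proposition~\ref{Prop: 4}, then to rewrite those moments through the centred height function and conclude with the Gaussian free field convergence of Theorem~\ref{Thm: 2}. Denote by $A^{(L)}_1,\dots,A^{(L)}_r$ the entries of the rescaled vector in the statement, so that $A^{(L)}_1=\bigl(\Phi^{N_1+1}_{2k_1}-\langle\Phi^{N_1+1}_{2k_1}\rangle_{t_1/2}\bigr)/(2L^{2k_1})$ and, for $j\ge 2$, $A^{(L)}_j=\bigl(P_{(t_j-t_1)/2}\Phi^{N_j+1}_{2k_j}-c_j\bigr)/(2L^{2k_j})$ with $c_j=\langle P_{(t_j-t_1)/2}\Phi^{N_j+1}_{2k_j}\rangle_{t_1/2}$. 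By Theorem~\ref{Thm: 1}(2) each operator $P_{(t_j-t_1)/2}\Phi^{N_j+1}_{2k_j}$ lies in $Z(U(\mathfrak{so}_{N_j+1}))\subseteq U(\mathfrak{so}_{N_1+1})$, and $\Phi^{N_1+1}_{2k_1}$ is central in $U(\mathfrak{so}_{N_1+1})$, so all the $A^{(L)}_j$ commute and any product of them can be rearranged so that the levels appear in non-increasing order with non-decreasing associated times. Expanding each power of an $A^{(L)}_j$ into monomials in $P_{(t_j-t_1)/2}\Phi^{N_j+1}_{2k_j}$ and applying Theorem~\ref{Thm: 3} monomial by monomial — which is legitimate because Theorem~\ref{Thm: 3} allows several factors at a common level and a common time, these contributing the corresponding power of the image polynomial evaluated at one and the same configuration, and because $c_j=\langle\Phi^{N_j+1}_{2k_j}\rangle_{t_j/2}$ by Theorem~\ref{Thm: 1}(3) — together with Proposition~\ref{Prop: 4} applied to $\Phi^{N_j+1}_{2k_j}/2\mapsto p^{N_j+1}_{2k_j}$ gives, for every multi-index $(j_1,\dots,j_m)$,
\begin{equation*}
  \bigl\langle A^{(L)}_{j_1}\cdots A^{(L)}_{j_m}\bigr\rangle_{t_1/2}
  =\mathbb{E}\Biggl[\,\prod_{i=1}^m\frac{p^{N_{j_i}+1}_{2k_{j_i}}(t_{j_i})-\mathbb{E}\,p^{N_{j_i}+1}_{2k_{j_i}}(t_{j_i})}{L^{2k_{j_i}}}\,\Biggr],
\end{equation*}
where $p^{N+1}_{2k}(t)$ stands for $\sum_s\bigl(x^{n,a}_s(t)-n+\tfrac34-\tfrac a2\bigr)^{2k}$, i.e. $p^{N+1}_{2k}$ evaluated on the configuration of level $N=2n-\tfrac12+a$ at time $t$.

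This reduces the statement to a property of the particle system. Using the identity $p^{N+1}_{2k}\approx\tfrac1{\sqrt\pi}\bigl(n^{2k+1}+2k\int_0^\infty(u-n)^{2k-1}H(u,N)\,du\bigr)$ recalled before the statement, substituting $u=Lx$, $N_j=\lfloor\eta_jL\rfloor$ (so that $n_j/L\to\eta_j/2$) and $t_j=\tau_jL$, subtracting the mean and dividing by $L^{2k_j}$, the deterministic term drops out and each factor of the last display becomes $\tfrac{2k_j}{\pi}\int_0^\infty\bigl(x-\tfrac{\eta_j}{2}\bigr)^{2k_j-1}H_L(x,\eta_j,\tau_j)\,dx+o(1)$, where $H_L$ is the field of Theorem~\ref{Thm: 2} and the $o(1)$ tends to $0$ in every $L^m$-norm, the discarded contributions (from the shift $\tfrac34-\tfrac a2$ and from $n^{2k+1}$ in place of $n(n-\tfrac34+\tfrac a2)^{2k}$) each carrying a factor $L^{-1}$. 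For any values of the spatial parameters the points $\bigl(\nu,\tfrac{\eta_j}{2},\tau_j\bigr)$, $1\le j\le r$, lie on a space-like path, since that condition involves only $\eta_1\ge\cdots\ge\eta_r$ and $\tau_1\le\cdots\le\tau_r$. Hence Theorem~\ref{Thm: 2} yields convergence of all mixed moments of $H_L$ at such points to the Wick sums of $\mathcal{G}\bigl(\Upsilon(\cdot),\Upsilon(\cdot)\bigr)$, and integrating this convergence against the weights $\bigl(x-\tfrac{\eta_j}{2}\bigr)^{2k_j-1}$ shows that $\bigl\langle A^{(L)}_{j_1}\cdots A^{(L)}_{j_m}\bigr\rangle_{t_1/2}$ converges, as $L\to\infty$, to the same mixed moment of a centred Gaussian vector $(\xi_1,\dots,\xi_r)$; since all mixed moments converge to Gaussian moments, this is exactly the asserted convergence with respect to $\langle\cdot\rangle_{t_1/2}$, and one reads off $\mathbb{E}[\xi_i\xi_j]=\tfrac{4k_ik_j}{\pi^2}\iint\bigl(x-\tfrac{\eta_i}{2}\bigr)^{2k_i-1}\bigl(y-\tfrac{\eta_j}{2}\bigr)^{2k_j-1}\mathcal{G}\bigl(\Upsilon(x,\tfrac{\eta_i}{2},\tau_i),\Upsilon(y,\tfrac{\eta_j}{2},\tau_j)\bigr)\,dx\,dy$, which is identified explicitly in Theorem~\ref{Thm: 4}.

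The algebraic reduction of the first paragraph and the height-function identity are routine once Theorems~\ref{Thm: 1} and~\ref{Thm: 3} and Proposition~\ref{Prop: 4} are granted; the main obstacle is the passage to the limit in the second paragraph. Interchanging $L\to\infty$ with integration against the polynomially growing weights $\bigl(x-\tfrac{\eta_j}{2}\bigr)^{2k_j-1}$, and controlling the $o(1)$ remainders in $L^m$-norm, requires uniform-in-$L$ bounds on the moments $\mathbb{E}\bigl[|H_L(x,\eta,\tau)|^m\bigr]$, the fact that $H-\mathbb{E}H\to0$ in moments outside the liquid region $\{(\nu,\eta/2,\tau)\in\mathcal{D}\}$ so that the integrands are effectively supported on a bounded interval, and the integrability of the logarithmic singularity of $\mathcal{G}$ near $\partial\mathcal{D}$. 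These inputs are available from the determinantal description in Theorem~\ref{thm: 1} and the estimates of \cite{borodin2011random,10.1214/EJP.v19-3732}, but must be assembled with some care.
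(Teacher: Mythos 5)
Your proposal is correct and follows essentially the same route as the paper: the paper's proof of this proposition is exactly the combination of Theorem~\ref{Thm: 3} (with Proposition~\ref{Prop: 4} and Theorem~\ref{Thm: 1}) to convert the joint moments in the state $\langle\cdot\rangle_{t_1/2}$ into joint moments of the particle system, followed by the space-like GFF moment convergence of Theorem~\ref{Thm: 2} to verify Wick's formula in the limit (the paper merely adds that one could alternatively argue combinatorially as in Borodin--Bufetov), and your write-up fills in this reduction in more detail than the paper does. The only discrepancy is the prefactor $4k_ik_j/\pi^2$ in your closing covariance formula versus $4kl/\pi$ in Proposition~\ref{prop: 6}, which stems from the paper's own inconsistent $\sqrt{\pi}$ normalization of the height function and does not affect the Gaussianity assertion being proved.
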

 \begin{proof}
 The convergence to a Gaussian vector (along both space-like and time-like paths)  for elements in $Z(U(\mathfrak{so}_{N+1}))$ can be proved using combinatorial arguments analogous to Proposition 5.1 of \cite{BB-Plancherel}. 
 Another way to see that is to apply  Theorem \ref{Thm: 3} and  Theorem \ref{Thm: 2} to show that the limit as $L\xrightarrow[]{}\infty$ of joint moments of 
 \begin{equation*}
     \frac{\Phi_{2k_j}^{N_j+1}-\left\langle\Phi_{2k_j}^{N_j+1}\right\rangle_{\frac{t_j}{2}}}{2L^{2k_j}}
 \end{equation*} satisfy the Wick's formula, which implies the convergence to a Gaussian vector.
 \end{proof}

Next, we calculate the covariance structure for elements in $Z(U(\mathfrak{so}_{N+1}))$.
\begin{Proposition}\label{prop: 6}
Suppose $\eta_1\ge \eta_2$ and $\tau_1\le \tau_2$,
\begin{multline}\label{eq: 7.1}
  \lim_{L\xrightarrow{}\infty}   \left\langle\frac{\Phi_{2k}^{\lfloor\eta_1 L\rfloor+1}-\left\langle\Phi_{2k}^{\lfloor\eta_1 L\rfloor+1}\right\rangle_{\tau_1L/2}}{2L^{2k}}\cdot\frac{P_{(\tau_2-\tau_1)/2L}\Phi_{2l}^{\lfloor\eta_2 L\rfloor+1}-\left\langle P_{(\tau_2-\tau_1)/2L}\Phi_{2l}^{\lfloor\eta_2 L\rfloor+1}\right\rangle_{\tau_1 L/2}}{2L^{2l}}\right\rangle_{\tau_1L/2}  \\
  = \frac{1}{(2\pi i)^2}\iint\displaylimits_{|v|>|u|} \left(\frac{(v+2)(\eta_1/2+\tau_1 v)^2}{v}\right)^k\left(\frac{(u+2)(\eta_2/2+\tau_2 u)^2}{u}\right)^l\frac{1}{(v-u)^2}dvdu.    
\end{multline}
\end{Proposition}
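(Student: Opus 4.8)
The plan is to derive \eqref{eq: 7.1} from the Gaussian free field convergence of the height function (Theorem~\ref{Thm: 2}) together with the linear relation between the central elements $\Phi_{2k}^{N+1}$ and integrals of $H$ recorded above. First I would apply Theorem~\ref{Thm: 3} with $r=2$, $Y_1=\Phi_{2k}^{\lfloor\eta_1 L\rfloor+1}$, $Y_2=\Phi_{2l}^{\lfloor\eta_2 L\rfloor+1}$, $t_1=\tau_1 L\le t_2=\tau_2 L$ (legitimate since $\lfloor\eta_1 L\rfloor\ge\lfloor\eta_2 L\rfloor$): since $\overline p_{\Phi_{2k}/2}^{\,N+1}(\mathbf x^{n,a})=\sum_{i=1}^n l_i^{2k}$ with $l_i=x_i^{n,a}-n+\tfrac34-\tfrac a2$, this rewrites the left side of \eqref{eq: 7.1} as the $L\to\infty$ limit of $\tfrac1{4L^{2k+2l}}$ times the covariance, under the particle process, of the two power-sum statistics $\sum_i l_i^{2k}$ at level/time $(n_1,a_1,\tau_1L)$ and $\sum_j l_j^{2l}$ at $(n_2,a_2,\tau_2 L)$. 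Using $p_{2k}^{N+1}\approx\tfrac1{\sqrt\pi}\big(n^{2k+1}+2k\int_0^\infty(u-n)^{2k-1}H(u,N)\,du\big)$ and the definition of $H_L$, after centering this covariance equals, up to $o(1)$,
\[
\frac{4kl}{\pi^2}\iint\big(\nu_1-\tfrac{\eta_1}2\big)^{2k-1}\big(\nu_2-\tfrac{\eta_2}2\big)^{2l-1}\,\mathbb E\big[H_L(\nu_1,\eta_1,\tau_1)H_L(\nu_2,\eta_2,\tau_2)\big]\,d\nu_1\,d\nu_2,
\]
where the integration runs over the two liquid regions (outside them $H$ is asymptotically deterministic and contributes nothing to the covariance).

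Next I would invoke Theorem~\ref{Thm: 2} with $r=2$: the inner expectation tends to $\mathcal G(\Upsilon_1,\Upsilon_2)$ with $\Upsilon_i=\Upsilon(\nu_i,\tfrac{\eta_i}2,\tau_i)$. Passing the limit inside the improper integral requires uniform control of the correlation kernel of Theorem~\ref{thm: 1} near the edges of the liquid regions, which follows from the steepest-descent estimates used in \cite{10.1214/EJP.v19-3732}. Integrating by parts once in $\nu_1$ and once in $\nu_2$ — the boundary terms vanish because $\mathcal G(z,w)\to0$ as $z$ or $w$ approaches the boundary of $\mathbb H-\mathbb D$, on which $z+z^{-1}$ becomes real — the limit becomes $\tfrac1{\pi^2}\iint(\nu_1-\tfrac{\eta_1}2)^{2k}(\nu_2-\tfrac{\eta_2}2)^{2l}\,\partial_{\nu_1}\partial_{\nu_2}\mathcal G\,d\nu_1\,d\nu_2$.

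Finally I would carry out the change of variables $v=\tfrac12(\Upsilon_1+\Upsilon_1^{-1})-1$, $u=\tfrac12(\Upsilon_2+\Upsilon_2^{-1})-1$ — the same substitution that linearizes the limit-shape equation. Differentiating the critical-point relation $\partial_z G(\nu_i,\tfrac{\eta_i}2,\tau_i;z)=0$ one checks, on one hand, that the polynomial weight becomes $\big(\tfrac{(v+2)(\eta_i/2+\tau_i v)^2}{v}\big)^{k_i}$ and, on the other hand, that $\partial_{\nu_1}\partial_{\nu_2}\mathcal G\cdot d\nu_1\,d\nu_2$ equals $\tfrac1{(2\pi i)^2}\tfrac{dv\,du}{(v-u)^2}$; the hypothesis $\eta_1\ge\eta_2$, $\tau_1\le\tau_2$ (equivalently $\Upsilon_1\succ\Upsilon_2$ for the partial order on triples) forces the images of the two liquid regions to be nested with $|v|>|u|$, giving \eqref{eq: 7.1}.

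The main obstacle is this last step: one must show that, after the substitution, the mixed second derivative of the Green's function $\mathcal G$ collapses exactly to $(v-u)^{-2}$, and one must keep careful track of the branch of the square root implicit in $\Upsilon$ and of the orientation and nesting of the resulting contours — a computation parallel to those in \cite{Borodin_2007} and \cite{kuan2014threedimensional}. Alternatively, one can bypass the height function and obtain the same double contour integral directly by a multidimensional steepest-descent analysis of the kernel of Theorem~\ref{thm: 1}, after collapsing the sums over $s_1,s_2$ via the completeness relation Lemma~\ref{Lemma: 2}(6); the relevant saddle-point equation is again $\partial_z G=0$. A secondary technical point in either route is the interchange of the limit with the improper integral, handled by the uniform kernel estimates mentioned above.
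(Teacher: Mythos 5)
Your proposal is correct and follows essentially the same route as the paper's proof: reduce via Theorem \ref{Thm: 3} to the covariance of the power sums $p_{2k}^{N+1}$ under the particle process, express these through the height function, apply Theorem \ref{Thm: 2} (with dominated convergence / uniform kernel control) to get the double integral against $\mathcal{G}$, and then integrate by parts and substitute $v=\frac{z+z^{-1}}{2}-1$, $u=\frac{w+w^{-1}}{2}-1$ to produce the nested double contour integral with kernel $(v-u)^{-2}$. The only difference is cosmetic — you integrate by parts in the $\nu$-variables before the change of variables, while the paper first symmetrizes to contours in $z,w$ and then integrates by parts — so the argument matches the paper's.
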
 
\begin{proof}

 By Theorem \ref{Thm: 3}, it suffices to compute the limit of
\begin{equation*}
   \frac{\mathbb{E}\left[\left(   p_{2k}^{\lfloor L\eta_1\rfloor+1}(\tau_1 L)-\mathbb{E} p_{2k}^{\lfloor L\eta_1\rfloor+1}(\tau_1 L)\right)\left( p_{2l}^{\lfloor L\eta_2\rfloor+1}(\tau_2 L)-\mathbb{E} p_{2l}^{\lfloor L\eta_2\rfloor+1}(\tau_2 L)\right)\right]}{L^{2k+2l}}  
\end{equation*}
as $L\xrightarrow{}\infty$. Apply Theorem \ref{Thm: 2} and dominated convergence theorem,
\begin{multline}\label{eq: 4}
     \lim_{L\xrightarrow{}\infty} \frac{\mathbb{E}\left[\left(   p_{2k}^{\lfloor L\eta_1\rfloor+1}(\tau_1 L)-\mathbb{E} p_{2k}^{\lfloor L\eta_1\rfloor+1}(\tau_1 L)\right)\left( p_{2l}^{\lfloor L\eta_2\rfloor+1}(\tau_2 L)-\mathbb{E} p_{2l}^{\lfloor L\eta_2\rfloor+1}(\tau_2 L)\right)\right]}{L^{2k+2l}}  \\
 =\displaystyle{\frac{4kl}{\pi}\int\displaylimits_{\left(x,\frac{\eta_1}{2},\tau_1\right)\in\mathcal{D}}\int\displaylimits_{\left(y,\frac{\eta_2}{2},\tau_2\right)\in\mathcal{D}} \left(x-\frac{\eta_1}{2}\right)^{2k-1}\left(y-\frac{\eta_2}{2}\right)^{2l-1}\mathcal{G}\left(\Upsilon\left(y,\frac{\eta_2}{2},\tau_2\right),\Upsilon\left(x,\frac{\eta_1}{2},\tau_1\right)\right)dxdy.}  
\end{multline}

Recall that  
\begin{equation*}
   \mathcal{G}(z,w)=\frac{1}{2\pi}\log \left|\frac{z+z^{-1}-\overline{w}-\overline{w}^{-1}}{z+z^{-1}-w-w^{-1}}\right|
\end{equation*}and
$\Upsilon:\mathcal{D}\xrightarrow{}\mathbb{H-D}$ is defined by sending $(d,l,t)\in\mathcal{D}$ to the solution of 
\begin{equation*}
   \left\{\begin{array}{cc}
    z+\overline{z}-\frac{1}{z\overline{z}}=-\frac{2l-2d-t}{t},      
    \\
    \frac{1}{z}+\frac{1}{\overline{z}}-z\overline{z}=-\frac{2l+2d-t}{t}. 
    \end{array}\right.
\end{equation*}
Now, let $z(x,\eta_1,\tau_1)=\Upsilon(x,\eta_1/2,\tau_1)$, $w(y,\eta_2,\tau_2)=\Upsilon(y,\eta_2/2,\tau_2)$, then $x(z)=\eta_1+\frac{\tau_1}{2}\frac{z^3-z^2+(\frac{2\eta_1}{\tau_1}-1)z+1}{z(z-1)}$ and $y(w)=\eta_2+\frac{\tau_2}{2}\frac{w^3-w^2+(\frac{2\eta_2}{\tau_2}-1)w+1}{w(w-1)}$.

Using the symmetry of the integrand,  the right-hand-side of \eqref{eq: 4} can be written as 
\begin{equation*}
    \frac{4kl}{(2\pi i)^2}\int\displaylimits_{\gamma_1}\int\displaylimits_{\gamma_2} \left(x(z)-\frac{\eta_1}{2}\right)^{2k-1}\left(y(w)-\frac{\eta_2}{2}\right)^{2l-1}\log(z+z^{-1}-w-w^{-1})\frac{d(x(z))}{dz}\frac{d(y(w))}{dw}dzdw,
\end{equation*}
where for each fixed $\eta_i$ and $\tau_i$, the integration contour $\gamma_i$   is a positively orientated path given by all points in the set $\left\{z(x,\eta_i,\tau_i),\overline{z(x,\eta_i,\tau_i)}|\left(x,\frac{\eta_i}{2},\tau_i\right)\in \mathcal{D}\right\}$ (see figure \ref{fig:1}). 
\begin{figure}
    \centering
    \includegraphics[width=0.5\columnwidth]{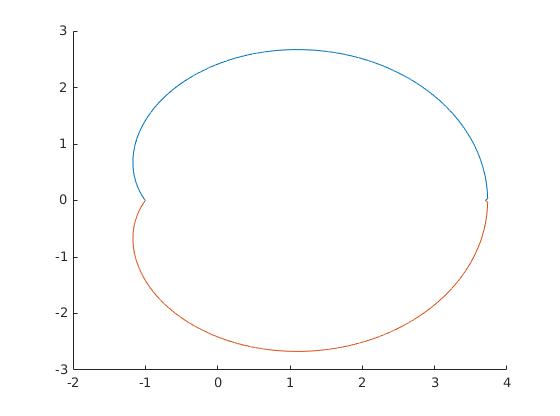}
    \caption{Integration contour $\gamma_i$ when $\eta_i/\tau_i=4$}
    \label{fig:1}
\end{figure}

Integrate by part in $z$ and $w$ and let $v=\frac{z+z^{-1}}{2}-1$, $u=\frac{w+w^{-1}}{2}-1$, we have 
\begin{equation}\label{eq: 7.3}
     \frac{1}{(2\pi i)^2}\oint\displaylimits_{\gamma_1'}\oint\displaylimits_{\gamma_2'} \left(\frac{(v+2)(\eta_1/2+\tau_1 v)^2}{v}\right)^k\left(\frac{(u+2)(\eta_2/2+\tau_2 u)^2}{u}\right)^l\frac{1}{(v-u)^2}dvdu.  
\end{equation}
Here $\gamma_1'$ and $\gamma_2'$ are images of $\gamma_1$ and $\gamma_2$ under the change of variable. Last, modify  $\gamma_1'$ and $\gamma_2'$ to two concentric circles centered at $0$, then the double integral could be written as 
\begin{equation*}
     \frac{1}{(2\pi i)^2}\iint\displaylimits_{|v|>|u|} \left(\frac{(v+2)(\eta_1/2+\tau_1 v)^2}{v}\right)^k\left(\frac{(u+2)(\eta_2/2+\tau_2 u)^2}{u}\right)^l\frac{1}{(v-u)^2}dvdu.  
\end{equation*}

\end{proof}

Note that
\begin{equation*}
 \frac{(v+2)(\eta_1/2+\tau_1v)^2}{v}=  \frac{\eta_1^2}{2v}+\tau_1^2v^2+(2\tau_1^2+\eta_1\tau_1)v+\frac{\eta_1^2}{4}+2\eta_1\tau_1,
\end{equation*}
then the double integral can be computed using residue theorem and the Taylor series
\begin{equation*}
    (v-u)^{-2}=v^{-2}\left(1+2\frac{u}{v}+3\frac{u^2}{v^2}+\cdots\right).
\end{equation*}

\begin{Example}
Recall $ \Phi_2^{N+1}=2\sum_{m=1}^n\left((F_{mm}+\rho_m)^2+2\sum_{-m<i<m}F_{mi}F_{im}\right)$.
When $k=l=1$, \eqref{eq: 7.1} equals $\tau_1^2\eta_2^2+1/2\tau_1\eta_1\eta_2^2$.

By direct computation,
\begin{equation*}
   P_{(\tau_2-\tau_1)L}\Phi_2^{N+1}=\Phi_2^{N+1} +2\sum_{m=1}^{n}\left(2(\tau_2-\tau_1)L+\sum_{-m<i<m}2(\tau_2-\tau_1)L\right)
\end{equation*}
and
\begin{equation*}
    \left\langle\Phi_2^{N+1}\right\rangle_{\tau_1L}=2\sum_{m=1}^{n}\left(\left(2\tau_1L+\rho_m^2\right)+2\sum_{-m<i<m}2\tau_1L\right),
\end{equation*}
thus 
\begin{equation*}
    P_{(\tau_2-\tau_1)L}\Phi_2^{N+1}-\left\langle P_{(\tau_2-\tau_1)L}\Phi_2^{N+1}\right\rangle_{\tau_1L}=\Phi_2^{N+1}-\left\langle\Phi_2^{N+1}\right\rangle_{\tau_1L},
\end{equation*}
moreover,
\begin{multline*}
   \lim_{L\xrightarrow{}\infty}   \left\langle\frac{\Phi_{2}^{\lfloor\eta_1 L\rfloor+1}-\left\langle\Phi_{2}^{\lfloor\eta_1 L\rfloor+1}\right\rangle_{\tau_1L/2}}{2L^{2k}}\cdot\frac{P_{(\tau_2-\tau_1)/2L}\Phi_{2}^{\lfloor\eta_2 L\rfloor+1}-\left\langle P_{(\tau_2-\tau_1)/2L}\Phi_{2}^{\lfloor\eta_2 L\rfloor+1}\right\rangle_{\tau_1 L/2}}{2L^{2l}}\right\rangle_{\tau_1L/2} \\ = \lim_{L\xrightarrow{} \infty}\frac{1}{L^4} \left\langle\left(\frac{\Phi_2^{\lfloor\eta_1 L\rfloor+1}}{2}-\left\langle\frac{\Phi_2^{\lfloor\eta_1 L\rfloor+1}}{2}\right\rangle_{\frac{\tau_1L}{2}}\right)\cdot\left(\frac{\Phi_2^{\lfloor\eta_2 L\rfloor+1}}{2}-\left\langle\frac{\Phi_2^{\lfloor\eta_2 L\rfloor+1}}{2}\right\rangle_{\frac{\tau_1L}{2}}\right)\right\rangle_{\frac{\tau_1L}{2}} =
\tau_1^2\eta_2^2+1/2\tau_1\eta_1\eta_2^2,  
\end{multline*}

which matches \eqref{eq: 7.1}.

\end{Example}

\subsection{Gaussian fluctuations along time-like paths}

Next, we prove an analogy of Proposition 5.3 in \cite{kuan2014threedimensional}, which provides an  expression for the covariance along the time-like paths. 

Given a partition $\rho=(\rho_1,\ldots,\rho_l)$, let $\Phi_\rho=\prod_{i=1}^l\Phi_{2\rho_i}$, denote $|\rho|=2(\rho_1+\cdots+\rho_l)$ and $\text{wt}(\rho)= |\rho|+l$.

\begin{Proposition}\label{Prop: 8}
 Let $\eta,\tau>0$, set $N=\lfloor\eta L\rfloor$, and $t=\tau L$, then
\begin{enumerate}[{(1)}]
    \item $<\Phi_{\rho}^{N+1}>_{t}=\Theta(L^{\text{wt}(\rho)})$ .
    \item There exist constants $c'_{k,\rho}(\tau,\eta)$ such that
\begin{equation*}
    P_{\tau L} \Phi_{2k}^{N+1}=\sum_\rho \left(c'_{k,\rho}(\tau,\eta)+o(1)\right)L^{2k+1-\text{wt}(\rho)}\Phi_{\rho}^{N+1},
\end{equation*}
where the sum is over $\rho$ such that $\text{wt}(\rho)\le 2k+1$.

\item For any $\tau_2\ge\tau_1$, there exist constants $c_{k,j}(\tau_2,\tau_1,\eta_2)$ such that 
\begin{multline*}
  \lim_{L\xrightarrow{}\infty}   \left<\frac{\Phi_{2m}^{\lfloor\eta_1 L\rfloor+1}-\left<\Phi_{2m}^{\lfloor\eta_1 L\rfloor+1}\right>_{\tau_1L}}{L^{2m}}\cdot\frac{P_{(\tau_2-\tau_1)/2L}\Phi_{2k}^{\lfloor\eta_2 L\rfloor+1}-\left<P_{(\tau_2-\tau_1)L}\Phi_{2k}^{\lfloor\eta_2 L\rfloor+1}\right>_{\tau_1 L}}{L^{2k}}\right>_{\tau_1L}\\
   =   \lim_{L\xrightarrow{}\infty}   \sum_{j=1}^k c_{k,j}(\tau_2,\tau_1,\eta_2) \left\langle\frac{\Phi_{2m}^{\lfloor\eta_1 L\rfloor+1}-\left\langle\Phi_{2m}^{\lfloor\eta_1 L\rfloor+1}\right\rangle_{\tau_1L}}{L^{2m}}\cdot\frac{\Phi_{2j}^{\lfloor\eta_2 L\rfloor+1}-\left\langle\Phi_{2j}^{\lfloor\eta_2 L\rfloor+1}\right\rangle_{\tau_1L}}{L^{2j}}\right\rangle_{\tau_1L}.  
\end{multline*}

\end{enumerate}

\end{Proposition}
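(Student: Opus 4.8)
The plan is to transport everything to the Harish--Chandra side, where $\Phi_{2k}^{N+1}/2$ is the power sum $p_{2k}=l_1^{2k}+\cdots+l_n^{2k}$, where a product $\Phi_\rho^{N+1}$ corresponds to $2^{\ell(\rho)}\prod_i p_{2\rho_i}$, and where, by Proposition~\ref{Prop: 4}, the state $\langle\cdot\rangle_{t/2}$ is the expectation of the corresponding shifted polynomial evaluated at the particle positions $\mathbf{x}^{n,a}(t)$. For part~(1) I would observe that at $N=\lfloor\eta L\rfloor$, $t=\tau L$ there are $n\asymp\tfrac{\eta}{2}L$ particles, all of order $L$, so the limit shape of Theorem~\ref{Thm: 5} gives $\langle\Phi_{2\rho_i}^{N+1}\rangle_t=\bar\mu_{\rho_i}(\eta,\tau)\,L^{2\rho_i+1}(1+o(1))$ with $\bar\mu_{\rho_i}(\eta,\tau)>0$ an explicit integral of $(\nu-\tfrac{\eta}{2})^{2\rho_i}$ against the limiting particle density. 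Since the fluctuations are of strictly smaller order (Proposition~\ref{Prop: 7}), the covariance corrections to $\langle\prod_i\Phi_{2\rho_i}^{N+1}\rangle_t$ are smaller by $O(L^{-2})$, so $\langle\Phi_\rho^{N+1}\rangle_t=\big(\prod_i\bar\mu_{\rho_i}(\eta,\tau)\big)L^{\text{wt}(\rho)}(1+o(1))=\Theta\big(L^{\text{wt}(\rho)}\big)$.

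For part~(2) the first step is a structural lemma, the analogue of Proposition~5.3 of \cite{kuan2014threedimensional}. Since $P_t=(\mathrm{id}\otimes\langle\cdot\rangle_{\kappa_t})\circ\Delta$ and the combinatorial (set-partition) formula for $\langle F_{i_1j_1}\cdots F_{i_mj_m}\rangle_t$ recalled in Section~4 is a polynomial in $t$ with trace coefficients, the element $P_t\Phi_{2k}^{N+1}$ is central (Theorem~\ref{Thm: 1}) and is a polynomial in $t$ with coefficients in $Z(U(\mathfrak{so}_{N+1}))$; expanding these in the basis $\{\Phi_\rho^{N+1}\}$ gives
\begin{equation*}
P_t\Phi_{2k}^{N+1}=\sum_{\text{wt}(\rho)\le 2k+1}\mathsf{P}_{k,\rho}(t,n)\,\Phi_\rho^{N+1}
\end{equation*}
for universal polynomials $\mathsf{P}_{k,\rho}$ in $t$ and the rank parameter $n$. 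I would then argue that, for the grading with $\deg t=\deg n=1$ and $\deg\Phi_{2j}^{N+1}=2j+1$, the element $P_t\Phi_{2k}^{N+1}$ is homogeneous of degree $2k+1$ (this is visible in Example~\ref{exp: 2} and is proved exactly as in \cite{kuan2014threedimensional}); since the $\Phi_\rho^{N+1}$ form a basis, this forces $\mathsf{P}_{k,\rho}$ to be homogeneous of degree $2k+1-\text{wt}(\rho)$. Substituting $t=\tau L$ and $n=\lfloor\eta L\rfloor/2+O(1)$ then gives $\mathsf{P}_{k,\rho}(\tau L,n)=(c'_{k,\rho}(\tau,\eta)+o(1))L^{2k+1-\text{wt}(\rho)}$ with $c'_{k,\rho}(\tau,\eta)=\mathsf{P}_{k,\rho}(\tau,\tfrac{\eta}{2})$, which is~(2); note that $\text{wt}(\rho)\le 2k+1$ forces every part of $\rho$ to be $\le k$.

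For part~(3) I would insert the expansion of~(2) for $P_{(\tau_2-\tau_1)L/2}\Phi_{2k}^{\lfloor\eta_2L\rfloor+1}$, subtract its $\langle\cdot\rangle_{\tau_1L}$-mean, multiply by $(\Phi_{2m}^{\lfloor\eta_1L\rfloor+1}-\langle\cdot\rangle_{\tau_1L})/L^{2m}$ and apply $\langle\cdot\rangle_{\tau_1L}$; modulo $o(1)$ (the $o(1)$ errors of~(2) being harmless since each $\rho$-summand turns out to be $O(1)$) this reduces to
\begin{equation*}
\sum_{\text{wt}(\rho)\le 2k+1}c'_{k,\rho}\big(\tfrac{\tau_2-\tau_1}{2},\eta_2\big)\,L^{\,1-\text{wt}(\rho)-2m}\,\mathrm{Cov}_{\tau_1L}\!\big(\Phi_{2m}^{\lfloor\eta_1L\rfloor+1},\Phi_\rho^{\lfloor\eta_2L\rfloor+1}\big).
\end{equation*}
Writing $\Phi_{2\rho_i}^{\lfloor\eta_2L\rfloor+1}=\mu_i+\delta_i$ with $\mu_i=\langle\Phi_{2\rho_i}^{\lfloor\eta_2L\rfloor+1}\rangle_{\tau_1L}\asymp L^{2\rho_i+1}$ and $\delta_i$ the centred fluctuation of order $L^{2\rho_i}$, and using that by the joint Gaussian convergence of Proposition~\ref{Prop: 7} all joint cumulants of order $\ge 3$ of the normalized $\Phi$'s vanish in the limit, one gets $\mathrm{Cov}_{\tau_1L}(\Phi_{2m}^{\lfloor\eta_1L\rfloor+1},\Phi_\rho^{\lfloor\eta_2L\rfloor+1})=\sum_i\big(\prod_{j\ne i}\mu_j\big)\mathrm{Cov}_{\tau_1L}(\Phi_{2m}^{\lfloor\eta_1L\rfloor+1},\delta_i)+o\big(L^{\text{wt}(\rho)-1+2m}\big)$; hence each $\rho$-summand converges to $c'_{k,\rho}(\tfrac{\tau_2-\tau_1}{2},\eta_2)\sum_i\big(\prod_{l\ne i}\bar\mu_{\rho_l}(\eta_2,\tau_1)\big)\Gamma(m,\rho_i)$, where $\Gamma(m,j):=\lim_L L^{-2m-2j}\mathrm{Cov}_{\tau_1L}(\Phi_{2m}^{\lfloor\eta_1L\rfloor+1},\Phi_{2j}^{\lfloor\eta_2L\rfloor+1})$ is the equal-time covariance, which exists by Proposition~\ref{prop: 6} (interchanging the two slots there if needed so the larger of $\eta_1,\eta_2$ is in the first slot). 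Collecting the contributions of all $\rho$ according to the common part value $j=\rho_i$ gives $\lim_L(\mathrm{LHS})=\sum_{j=1}^{k}c_{k,j}(\tau_2,\tau_1,\eta_2)\,\Gamma(m,j)$ with $c_{k,j}(\tau_2,\tau_1,\eta_2)=\sum_{\rho}c'_{k,\rho}\big(\tfrac{\tau_2-\tau_1}{2},\eta_2\big)\sum_{i:\rho_i=j}\prod_{l\ne i}\bar\mu_{\rho_l}(\eta_2,\tau_1)$, and since $\Gamma(m,j)=\lim_L\big\langle\tfrac{\Phi_{2m}^{\lfloor\eta_1L\rfloor+1}-\langle\cdot\rangle_{\tau_1L}}{L^{2m}}\cdot\tfrac{\Phi_{2j}^{\lfloor\eta_2L\rfloor+1}-\langle\cdot\rangle_{\tau_1L}}{L^{2j}}\big\rangle_{\tau_1L}$, this is exactly the asserted identity; only $j\le k$ occur by the constraint noted in~(2).

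The hard part will be the structural lemma used in~(2): that $P_t\Phi_{2k}^{N+1}$ expands in $\{\Phi_\rho^{N+1}\}$ with coefficients that are universal polynomials in $(t,n)$, homogeneous of the stated degree. This is really a statement about the projective limit of the centres $Z(U(\mathfrak{so}_{N+1}))$ — equivalently about the ring of shifted symmetric functions — and it requires tracking the action of the coproduct $\Delta$ and of the state $\langle\cdot\rangle_{\kappa_t}$ on the quasi-determinantal generators with care about the $N$-dependence; it parallels Proposition~5.3 of \cite{kuan2014threedimensional} closely. Once that lemma is available, parts~(1) and~(3) are bookkeeping with orders of magnitude together with the Gaussianity and cumulant decay already in hand from Proposition~\ref{Prop: 7}, the limit shape of Theorem~\ref{Thm: 5} (for the constants $\bar\mu$) and the equal-time covariance of Proposition~\ref{prop: 6} (for the constants $\Gamma$).
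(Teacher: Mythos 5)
Your proposal follows essentially the same route as the paper's proof, which likewise establishes (1) from the limit shape (Theorem \ref{Thm: 5}) together with a Cauchy--Schwarz/positivity argument, obtains (2) by expanding $P_t\Phi_{2k}^{N+1}$ in the $\{\Phi_\rho^{N+1}\}$ basis and invoking the order-of-magnitude constraint while deferring the detailed structural analysis to the arguments of \cite{kuan2014threedimensional}, and proves (3) by linearizing $\Phi_\rho^{N+1}-\langle\Phi_\rho^{N+1}\rangle$ around the means of its factors using the Gaussian fluctuations. One small correction: $P_t\Phi_{2k}^{N+1}$ is not exactly homogeneous for your grading $\deg t=\deg n=1$, $\deg\Phi_{2j}^{N+1}=2j+1$ (e.g.\ $P_t\Phi_2^{N+1}=\Phi_2^{N+1}+4tn^2+4tn$ contains the lower-degree term $4tn$), so you should only claim that the coefficients $\mathsf{P}_{k,\rho}(t,n)$ have total degree at most $2k+1-\text{wt}(\rho)$, with $c'_{k,\rho}(\tau,\eta)$ the evaluation of the top homogeneous part at $(\tau,\eta/2)$ --- which is all that the asserted form $\left(c'_{k,\rho}+o(1)\right)L^{2k+1-\text{wt}(\rho)}$ requires.
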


\begin{proof}
We can prove all the results following the same arguments in \cite{kuan2014threedimensional}. Another intuitive way to see that (1) is true is to use Proposition \ref{Prop: 4}, Theorem \ref{Thm: 5} and Theorem \ref{Thm: 2}.  
 Recall the limit of height function in frozen region is given by $(\frac{\eta}{2}-x)$, and in liquid region $\mathcal{D}$ is expected to be the function  $h\left(\Upsilon\left(x,\frac{\eta}{2},\tau\right)\right)$ defined in \eqref{eq: 10}. Note that $l\left(\frac{\eta}{2},\tau\right)<\frac{\eta}{2}< r\left(\frac{\eta}{2},\tau\right)$ when $\eta>0$. Let $N=\lfloor\eta L\rfloor$, $t=\tau L$, then interchange the expectation and integration, we have
\begin{multline}\label{eq: 7.2}
  \lim_{L\xrightarrow{}\infty} \left\langle\frac{\Phi^{N+1}_{2k}}{2L^{2k+1}}\right\rangle_{\frac{t}{2}} =   \lim_{L\xrightarrow{}\infty}\frac{\mathbb {E}[p_{2k}^{N+1}]}{L^{2k+1}}
   =\frac{1}{\sqrt{\pi}}\bigg(2k \int_{l\left(\frac{\eta}{2},\tau\right)}^{r\left(\frac{\eta}{2},\tau\right)}\left(x-\frac{\eta}{2}\right)^{2k-1}h\left(\Upsilon\left(x,\frac{\eta}{2},\tau\right)\right)dx\\
   -2k \int_{0}^{l\left(\frac{\eta}{2},\tau\right)}\left(x-\frac{\eta}{2}\right)^{2k}dx
   +\left(\frac{\eta}{2}\right)^{2k+1}\bigg).
\end{multline}

The right-hand-side of \eqref{eq: 7.2} is strictly bounded below by
\begin{multline*}
     \frac{1}{\sqrt{\pi}}\left(2k \int_{l\left(\frac{\eta}{2},\tau\right)}^{\frac{\eta}{2}}\left(x-\frac{\eta}{2}\right)^{2k-1}\left(\frac{\eta}{2}-l\left(\frac{\eta}{2},\tau\right)\right)dx
   -2k \int_{0}^{l\left(\frac{\eta}{2},\tau\right)}\left(x-\frac{\eta}{2}\right)^{2k}dx
   +\left(\frac{\eta}{2}\right)^{2k+1}\right)\\
   =\frac{\left(\frac{\eta}{2}\right)^{2k+1}-\left(\frac{\eta}{2}-l\left(\frac{\eta}{2},\tau\right)\right)^{2k+1}}{\sqrt{\pi}(2k+1)}.
\end{multline*}
Thus, when $\eta>0$, the right-hand side of \eqref{eq: 7.2} is strictly larger than $0$ and is finite, which means $\left\langle\Phi^{N+1}_{2k}\right\rangle_{t}=\Theta(L^{2k+1})$. Also by Theorem \ref{Thm: 4}, we know that 
\begin{equation*}
     \left\langle\left(\Phi^{N+1}_{2k}\right)^2\right\rangle_{t}=\Theta(L^{2(2k+1)}).
\end{equation*}
Thus by Cauchy-Schwartz inequality, $\langle\Phi_\rho\rangle_{t}=\mathcal{O}(L^{\text{wt}(\rho)})$.

To get a lower bound, one notice that  $\lim_{L\xrightarrow{}\infty}\frac{p_{2k}^{N+1}(\lambda(t))}{L^{2k+1}}>0$ a.e. with respect to $\mathbb{E}$,
\begin{equation*}
     \lim_{L\xrightarrow{}\infty}\left\langle\prod_{i=1}^l \frac{\Phi^{N+1}_{2\rho_i}}{2L^{2\rho_i+1}}\right\rangle_{t/2} =\lim_{L\xrightarrow{}\infty} \mathbb{E}\left(\prod_{i=1}^l \frac{p^{N+1}_{2k_i}}{L^{2\rho_i+1}}\right)>0.
\end{equation*}

(2) follows from the fact that   $\langle P_{\tau_1 L} \Phi_{2k}^{N+1}\rangle_{\tau_2 L}=\Theta(L^{\text{2k+1}})$, thus only $\text{wt}(\rho)\le 2k+1$ terms have nonzero coefficients.

To prove (3), we first apply (2) to the left-hand-side. Note that $\frac{\Phi_{2k}^{N+1}-   \langle\Phi_{2k}^{N+1}\rangle_{t}}{L^{2k}}
$ corresponds to the moment of the fluctuation of height function, thus converges to a Gaussian random variable. Heuristically, $\Phi_{2k}^{N+1}\approx \langle\Phi_{2k}^{N+1}\rangle_{t}+\xi L^{2k}$, where $\xi$ is a Gaussian random variable.
\begin{multline*}
    \prod_{i=1}^l\Phi_{2\rho_i}-\left\langle\prod_{i=1}^l\Phi_{2\rho_i}\right\rangle_{\tau L}
    =\sum_{j=1}^l\left\langle\Phi_{2\rho_1}\right\rangle_{\tau L}\cdots\widehat{\left\langle\Phi_{2\rho_j}\right\rangle_{\tau L}}\cdots\left\langle\Phi_{2\rho_l}\right\rangle_{\tau L}\left(\Phi_{2\rho_j}-\left\langle\Phi_{2\rho_j}\right\rangle_{\tau L}\right)+\text{ smaller order terms}
\end{multline*}
Thus in the asymptotic limit, we could replace $P_t\Phi_\rho$ with a linear combination of $\Phi_{2k}$ with $2k+1\le \text{wt}(\rho)$.
\end{proof}

Expand $(v-u)^{-2}$ as $v^{-2}\left(1+2\frac{u}{v}+3\frac{u^2}{v^2}+\cdots\right)$ in Proposition \ref{prop: 6} and take residues, one obtains 
\begin{equation}\label{eq: 7.4}
    \sum_{j=1}^k c_{kj}(\tau_2,\tau_1,\eta_2) \left(\frac{(v+2)\left(\frac{\eta_2}{2}+\tau_1v\right)^2}{v}\right)^j[v^r]=\left(\frac{(v+2)\left(\frac{\eta_2}{2}+\tau_2v\right)^2}{v}\right)^k[v^r]
\end{equation}
for $r\le -1$.
We make use of the  expansion \eqref{eq: 7.4} to compute the coefficients $c_{k,j}$, which provides a formula for the covariance along the time-like paths.
\begin{Example}
When $k=2$, solving \eqref{eq: 7.4} for $r=-1,-2$, we have $c_{2,2}=1$ and $c_{2,1}=4\eta_2(\tau_2-\tau_1)$, which agrees with the expansion of $P_t\Phi_4^{N+1}$ derived in Example \ref{exp: 2}:
\begin{equation*}
    P_t\Phi_4^{N+1}=\Phi_4^{N+1}+16tn\Phi_2^{N+1}+\text{constant}.
\end{equation*}

\end{Example}
\begin{Example}
\begin{enumerate}[{(1)}]
    \item When $k=3$,\\ $c_{3,3}=1$,\\ $c_{3,2}=-6\eta_2(\tau_1-\tau_2)$,\\ $c_{3,1}=-\frac{3}{2}\left(\eta_2^3\tau_1-6\eta_2^2\tau_1^2-\eta_2^3\tau_2+16\eta_2^2\tau_1\tau_2-10\eta_2^2\tau_2^2\right)$.
    \item When $k=4$,\\ $c_{4,4}=1$,\\ $c_{4,3}=-8\eta_2(\tau_1-\tau_2)$,\\ $c_{4,2}=-2\left(\eta_2^3\tau_1-10\eta_2^2\tau_1^2-\eta_2^3\tau_2+24\eta_2^2\tau_1\tau_2-14\eta_2^2\tau_2^2\right)$,\\
    $c_{4,1}=-\frac{1}{2}\left(-\eta_2^5\tau_1+12\eta_2^4\tau_1^2-32\eta_2^3\tau_1^3+\eta_2^5\tau_2-40\eta_2^4\tau_1\tau_2+144\eta_2^3\tau_1^2\tau_2+28\eta_2^4\tau_2^2-224\eta_2^3\tau_1\tau_2^2+122\eta_2^3\tau_2^3\right)$.
\end{enumerate}

\end{Example}

Last, we briefly show the convergence to a Gaussian vector along time-like paths.
\begin{Proposition}
  Suppose $N_j=\lfloor \eta_j L\rfloor$, $t_j=\tau_j L$ for $1\le j\le r$. Assume $\eta_1\le \ldots\le \eta_r$ and $\tau_1\le \ldots\le \tau_r$, then as $L\xrightarrow{}\infty$,
\begin{equation*}
    \left(\frac{\Phi_{2k_1}^{N_1+1}-\left\langle\Phi_{2k_1}^{N_1+1}\right\rangle_{\frac{t_1}{2}}}{2L^{2k_1}},\ldots,\frac{P_{\frac{t_r-t_1}{2}}\Phi_{2k_r}^{N_r+1}-\left\langle P_{\frac{t_r-t_1}{2}}\Phi_{2k_r}^{N_r+1}\right\rangle_{\frac{t_1}{2}}}{2L^{2k_r}} \right)\xrightarrow{} (\xi_1,\ldots,\xi_r),
\end{equation*}
where the convergence is with respect to the state $\langle\cdot\rangle_{\frac{t_1}{2}}$ and $(\xi_1,\ldots,\xi_r)$ is a Gaussian vector. 
 \end{Proposition}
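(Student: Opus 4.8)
The plan is to reduce the time-like statement to a single-time statement, which is automatically space-like and hence already covered by Proposition \ref{Prop: 7}. Two steps are involved: first remove the Markov operators $P_{(t_j-t_1)/2}$, turning each factor into a (deterministic-coefficient) linear combination of central generators at the common time $t_1$; second, invoke the single-time Gaussian convergence for these generators and pass back through the linear combinations.

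For the first step I would use Proposition \ref{Prop: 8}(2): each $P_{(\tau_j-\tau_1)L/2}\Phi_{2k_j}^{N_j+1}$ expands, up to $o(1)$ relative errors, as a sum of products $\Phi_\rho^{N_j+1}$ with $\mathrm{wt}(\rho)\le 2k_j+1$ times explicit powers of $L$. Centering against $\langle\cdot\rangle_{\tau_1L/2}$ annihilates the leading power of $L$, and expanding each centered product $\Phi_\rho^{N_j+1}-\langle\Phi_\rho^{N_j+1}\rangle_{\tau_1L/2}$ as a sum of single-factor fluctuations multiplied by the remaining expectations (which have explicit polynomial order in $L$ by Proposition \ref{Prop: 8}(1)) yields, exactly as in the proof of Proposition \ref{Prop: 8}(3),
\begin{equation*}
\frac{P_{(\tau_j-\tau_1)L/2}\Phi_{2k_j}^{N_j+1}-\bigl\langle P_{(\tau_j-\tau_1)L/2}\Phi_{2k_j}^{N_j+1}\bigr\rangle_{\tau_1L/2}}{2L^{2k_j}}=\sum_{l=1}^{k_j}c_{k_j,l}(\tau_j,\tau_1,\eta_j)\,\frac{\Phi_{2l}^{N_j+1}-\langle\Phi_{2l}^{N_j+1}\rangle_{\tau_1L/2}}{2L^{2l}}+o(1)
\end{equation*}
in every joint moment with respect to $\langle\cdot\rangle_{\tau_1L/2}$, with the $c_{k_j,l}$ of Proposition \ref{Prop: 8}(3). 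Since any fixed linear combination of the entries of a moment-Gaussian vector is again moment-Gaussian, it then suffices to prove that the finite family $\{(\Phi_{2l}^{N_j+1}-\langle\Phi_{2l}^{N_j+1}\rangle_{\tau_1L/2})/(2L^{2l}):1\le j\le r,\ 1\le l\le k_j\}$ converges, in the sense of joint moments, to a Gaussian vector.

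All these observables sit at the single time $t_1=\tau_1L$, and points sharing a time coordinate are pairwise comparable for the partial order $\prec$, so this is a space-like configuration; the desired convergence is then Proposition \ref{Prop: 7} applied with all time parameters equal to $\tau_1$ (reordering the $\eta_j$ to be nonincreasing changes nothing). Concretely one rewrites the joint moments via Proposition \ref{Prop: 4} and Theorem \ref{Thm: 3} as expectations of products of the shifted polynomials $\overline{p}^{N_j+1}_{\Phi_{2l}/2}$ on the single-time determinantal process, uses the integral identity relating $p_{2l}^{N+1}$ to the height function to present each observable asymptotically as a linear functional of the fluctuation field $H_L(\cdot,\eta_j,\tau_1)$, and applies Theorem \ref{Thm: 2}, whose content is Wick's formula for the joint moments of $H_L$; Wick's formula is stable under taking linear functionals. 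The one genuinely delicate point is the first step: one must check that the numerous error terms generated by expanding each $P_{(\tau_j-\tau_1)L/2}\Phi_{2k_j}^{N_j+1}$ and splitting each centered product contribute $o(1)$ to every joint moment. As in \cite{kuan2014threedimensional}, this is a bookkeeping argument resting solely on the uniform order estimates of Proposition \ref{Prop: 8}(1) and iterated Cauchy--Schwarz; alternatively the whole statement follows at once from the cumulant method of Proposition 5.1 in \cite{BB-Plancherel}, by verifying directly that all joint cumulants of order $\ge 3$ vanish in the limit.
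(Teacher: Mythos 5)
Your proposal is correct and follows essentially the same route as the paper, which likewise proves the statement by combining Proposition \ref{Prop: 8} (to trade the time-evolved observables for linear combinations of single-time generators), Theorem \ref{Thm: 3}, and Theorem \ref{Thm: 2} (equivalently Proposition \ref{Prop: 7}) to obtain joint moments satisfying Wick's formula. Your write-up simply fills in more of the bookkeeping (the $o(1)$ control of error terms and the reduction to the equal-time, hence space-like, configuration) that the paper leaves as a brief sketch.
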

\begin{proof}
Same as the proof for space-like paths in Proposition \ref{Prop: 7}, we could  apply Theorem \ref{Thm: 3}, \ref{Thm: 2} and Proposition \ref{Prop: 8} to  derive the explicit formula for the joint moments, which  satisfies the Wick's formula and implies the convergence to a Gaussian vector.

\end{proof}

%\bibliography{references}
%\bibliographystyle{plain}
\end{document}